\def\sectionautorefname~{\S}
\newtheorem{theorem}{Theorem}[section]
\newtheorem{definitio}[theorem]{Definition}
\newenvironment{definition}{\begin{definitio} \rm }{\end{definitio}}
\newtheorem{rem}[theorem]{Remark}
\newtheorem{ex}[theorem]{Example}
\newenvironment{example}{\begin{ex} \rm }{\end{ex}}
\newtheorem{lemma}[theorem]{Lemma}
\newtheorem{proposition}[theorem]{Proposition}
\newtheorem{corollary}[theorem]{Corollary}
\newcommand{\fl}[1]{\textrm{\raisebox{-0.14cm}{\begin{tikzpicture}\draw (0.5,0) node{$\longrightarrow$}; \draw (0.5,0.3) node{$#1$};
  \end{tikzpicture}}}}
\newcommand{\F}{\mathbb{F}}
\newcommand{\Z}{\mathbb{Z}}
\newcommand{\Zt}{\Z[t^{\pm1}]}
\newcommand{\Q}{\mathbb{Q}}
\newcommand{\C}{\mathbb{C}}
\newcommand{\K}{\mathbb{K}}
\newcommand{\HH}{\mathcal{H}}
\newcommand{\Seq}{\mathcal{S}}
\newcommand{\Int}[1]{\mathrm{Int}{(#1)}}
\newcommand{\p}[1]{\widehat{#1}}
\newcommand{\pl}{{\scriptscriptstyle{\Lambda}}}
\newcommand{\pf}{{\scriptscriptstyle{\F}}}
\newcommand{\pr}{{\scriptscriptstyle{R}}}
\newcommand{\hS}{h_{\scriptscriptstyle{\Sigma}}}
\newcommand{\hYS}{h_{\scriptscriptstyle{Y,\Sigma}}}
\newcommand{\hXY}{h_{\scriptscriptstyle{X,Y}}}
\newcommand{\hY}{h_{\scriptscriptstyle{Y}}}
\newcommand{\hX}{h_{\scriptscriptstyle{X}}}
\newcommand{\ha}{h_\alpha}
\newcommand{\hb}{h_\beta}
\newcommand{\hc}{h_\gamma}
\newcommand{\hab}{h_{\alpha\beta}}
\newcommand{\hbc}{h_{\beta\gamma}}
\newcommand{\hca}{h_{\gamma\alpha}}
\definecolor{vert}{RGB}{0,205,0}
\title{Torsions and intersection forms of $4$--manifolds from trisection diagrams}
\author{Vincent Florens and Delphine Moussard}
\date{}
\begin{document}

\begin{abstract}
Gay and Kirby introduced trisections which describe any  closed oriented smooth 4--manifold $X$ as a union of three four-dimensional handlebodies. A trisection is encoded in a diagram, namely three collections of curves in a closed oriented surface $\Sigma$, guiding the gluing of the handlebodies. Any morphism $\varphi$ from $\pi_1(X)$ to a finitely generated free abelian group induces a morphism on $\pi_1(\Sigma)$. We express the twisted homology and Reidemeister torsion of $(X;\varphi)$ in terms of the first homology of $(\Sigma;\varphi)$ and the three subspaces generated by the collections of curves. We also express the intersection form of $(X;\varphi)$ in terms of the intersection form of $(\Sigma;\varphi)$.
\vspace{1ex}

\noindent \textbf{MSC 2010}: 57Q10 57M99 
\end{abstract}

\maketitle

\section{Introduction}

In \cite{GK}, Gay and Kirby proved that any smooth closed oriented $4$--manifold can be trisected into three $4$--dimensional handlebodies with $3$--dimensional handlebodies as pairwise intersections and a closed surface as triple intersection. Such a decomposition can be encoded in a trisection diagram given by three families of curves on this surface. 
This could be thought as a $4$--dimensional analogue of Heegaard splittings and diagrams and allows to use classical $2$ and $3$--dimensional technics to describe invariants of $4$--manifolds. 

Feller, Klug, Schirmer and Zemke \cite{FKSZ18} recently expressed the homology and the intersection form of a closed $4$--manifold $X$ in terms of a trisection diagram. In this paper, we extend their results to the case of coefficients twisted by a group homomorphism $\varphi:\pi_1(X) \rightarrow G$, where $G$ is a finitely generated free abelian group, and we express the Reidemeister torsion of $(X;\varphi)$ in terms of the diagram. More precisely,  we introduce a short finite dimensional complex, whose spaces are given by the first twisted homology module of the surface $\Sigma$ of the diagram and its subspaces generated by the curves of the diagram. We show that the homology of $(X,\varphi)$ and the related Reidemeister torsion are those of this complex. Using the associated expression of the homology modules of $(X;\varphi)$, we express the intersection form of $(X,\varphi)$ in terms of the intersection form of $(\Sigma,\varphi)$. We also give a method to compute the Alexander polynomial of $(X;\varphi)$ from a trisection diagram.

Our approach is different from \cite{FKSZ18}: while they use a handle decomposition of the manifold associated with the trisection, we work directly with the trisection itself ---a similar method was developed by Ranicki in \cite{Ra09} to compute the signature of a $4$--manifold given as a triple union. We also review the non-twisted homology and intersection form (corresponding to a trivial morphism $\varphi$) from this point of view; this yields especially a simple expression of $H_1(X;\Z)$ and explicit representatives of the homology classes.

\subsection*{Plan of the paper.}
In Section~\ref{secstatements}, we state the main results of the paper.  
In Section~\ref{secprelim}, we recall some definitions and facts related to the twisted homology and the process of reconstruction of the 4--manifold $X$ from the trisection diagram; we also fix some notations. In Section~\ref{sechomologyX}, we compute the homology of $X$ with coefficients in $\Z$. In Section~\ref{sechomologyXtwist}, we describe the twisted homology of $(X;\varphi)$ for a non-trivial $\varphi$. The torsion is treated in Sections~\ref{secTorsion} and~\ref{secTorsionXstar}. Section~\ref{secintform} is devoted to intersection forms. Finally, in Section~\ref{secex}, we illustrate the results with explicit examples.

\subsection*{Acknowledgements.}
The first author was partially supported by the ANR Project LISA 17-CE40-0023-01.
While working on the contents of this paper, the second author has been supported by a Postdoctoral Fellowship of the {\em Japan Society for the Promotion of Science}. She is grateful to Tomotada Ohtsuki and the {\em Research Institute for Mathematical Sciences} for their support. She is now supported by the {\em R\'egion Bourgogne Franche-Comt\'e} project ITIQ--3D. She thanks Gw\'ena\"el Massuyeau and the {\em Institut de Math\'ematiques de Bourgogne} for their support.

\subsection*{Conventions.} 
The boundary of an oriented manifold with boundary is oriented with the ``outward normal first'' convention. 
We also use this convention to define the co-orientation of an oriented manifold embedded in another oriented manifold. \\ 
We use the same notation for a curve $\nu$ in a manifold, its homotopy class and its homology class, precising the one we consider if it is not clear from the context.\\
If $U$ and $V$ are transverse integral chains in a manifold $M$ such that $\dim(U)+\dim(V)=\dim(M)$, define the sign $\sigma_x$ of an intersection point $x\in U\cap V$ in the following way. Construct a basis of the tangent space $T_xM$ of $M$ at $x$ by taking an oriented basis of the normal space $N_xU$ followed by an oriented basis of $N_xV$. Set $\sigma_x=1$ if this basis is an oriented basis of $T_xM$ and $\sigma_x=-1$ otherwise. Now the algebraic intersection number of $U$ and $V$ in $M$ is $\langle U,V\rangle_M=\sum_{x\in U\cap V}\sigma_x$.

\section{Statement of the results} \label{secstatements}

Let $X$ be a closed, connected, oriented, smooth 4--manifold. A $(g,k)$--\emph{trisection of} $X$ is a decomposition $X=X_1\cup X_2\cup X_3$ such that 
\begin{itemize}
 \item $X_i\simeq\natural^k(S^1\times B^3)$ is a $4$--dimensional handlebody for each $i$,
 \item $X_i \cap X_j \simeq\natural^g(S^1\times D^2)$ is a $3$--dimensional handlebody for all $i\neq j$,
 \item $\Sigma=X_1 \cap X_2 \cap X_3$ is a closed surface of genus $g$.
\end{itemize}

Note that $\partial X_i \simeq \sharp^k (S^1 \times S^2)$ and $(\Sigma,X_i \cap X_{i-1},X_i \cap X_{i+1})$ is a genus $g$ Heegaard splitting of $\partial X_i$, where indices are understood modulo 3.
A \emph{trisection diagram} consists of three systems $(\alpha_i)_{1\leq i\leq g}$, $(\beta_i)_{1\leq i\leq g}$ and $(\gamma_i)_{1\leq i\leq g}$ 
of disjoint simple closed curves on the standard closed genus $g$ surface $\Sigma$ such that each one is a complete system of meridians of a handlebody 
of the trisection, respectively $H_\alpha:=X_3 \cap X_1$, $H_\beta:=X_1 \cap X_2$ and $H_\gamma:=X_2 \cap X_3$. 
\begin{figure}[htb] \label{fig}
\begin{center}
\begin{tikzpicture} [scale=0.5]
\draw (0,0) ..controls +(0,1) and +(-2,1) .. (4,2) ..controls +(2,-1) and +(-2,-1) .. (8,2) ..controls +(2,1) and +(0,1) .. (12,0);
\draw (0,0) ..controls +(0,-1) and +(-2,-1) .. (4,-2) ..controls +(2,1) and +(-2,1) .. (8,-2) ..controls +(2,-1) and +(0,-1) .. (12,0);

\draw (2,0) ..controls +(0.5,-0.25) and +(-0.5,-0.25) .. (4,0);
\draw (2.3,-0.1) ..controls +(0.6,0.2) and +(-0.6,0.2) .. (3.7,-0.1);
\draw[color=red] (3,-0.2) ..controls +(0.2,-0.5) and +(0.2,0.5) .. (3,-2.3);
\draw[dashed,color=red] (3,-0.2) ..controls +(-0.2,-0.5) and +(-0.2,0.5) .. (3,-2.3);
\draw[color=blue] (3,0)ellipse(1.6 and 0.8);
\draw[color=green] (1,0) .. controls +(0,2) and +(-1,0) .. (6,0.8) .. controls +(0.5,0) and +(-1,-1) .. (8,2);
\draw[color=green] (1,0) .. controls +(0,-2) and +(-1,-0.5) .. (7,0) .. controls +(0.5,0.3) and +(-0.5,0.5) .. (8.5,0);
\draw[color=green,dashed] (8,2) .. controls +(0.5,-0.5) and +(0,1) .. (8.5,0);

\draw (8,0) ..controls +(0.5,-0.25) and +(-0.5,-0.25) .. (10,0);
\draw (8.3,-0.1) ..controls +(0.6,0.2) and +(-0.6,0.2) .. (9.7,-0.1);
\draw[color=red] (9,-0.2) ..controls +(0.2,-0.5) and +(0.2,0.5) .. (9,-2.3);
\draw[dashed,color=red] (9,-0.2) ..controls +(-0.2,-0.5) and +(-0.2,0.5) .. (9,-2.3);
\draw[color=blue] (9,0)ellipse(1.6 and 0.8);
\draw[color=green] (11,0) .. controls +(0,-2) and +(1,0) .. (6,-0.8) .. controls +(-0.5,0) and +(1,1) .. (4,-2);
\draw[color=green] (11,0) .. controls +(0,2) and +(1,0.5) .. (5,0) .. controls +(-0.5,-0.3) and +(0.5,-0.5) .. (3.5,0);
\draw[color=green,dashed] (4,-2) .. controls +(-0.5,0.5) and +(0,-1) .. (3.5,0);
\end{tikzpicture}
\caption{A trisection diagram for $S^2\times S^2$} \label{figtrisection diagram}
\end{center}
\end{figure}
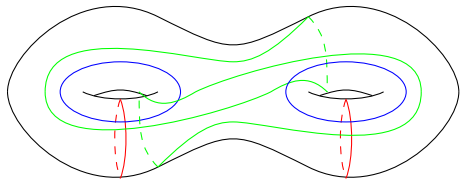

\subsection{Homology} \label{sechomol}

We compute the homology of $X$ in terms of the trisection diagram. We separate the case of non-twisted coefficients (corresponding to a trivial $\varphi$) and the twisted case. 

The first result is close to \cite[Theorem 3.1]{FKSZ18}. 
For $\nu \in \{\alpha,\beta,\gamma\}$, let $L_\nu$ be the subgroup of $H_1(\Sigma;\Z)$ generated by the homology classes of the curves $\nu_i$. 
 We introduce the following complex $C$:
\begin{equation*} 
 0 \to \Z \fl{\scriptstyle{0}} (L_\alpha \cap L_\beta)\oplus(L_\beta \cap L_\gamma)\oplus(L_\gamma \cap L_\alpha)
 \fl{\zeta} L_\alpha \oplus L_\beta \oplus L_\gamma \fl{\iota} H_1(\Sigma) \fl{\scriptstyle{0}} \Z \to 0,
 \end{equation*}
 where $\zeta(x,y,z)=(x-z,y-x,z-y)$ and $\iota$ is defined by the inclusions $L_\nu \hookrightarrow H_1(\Sigma)$ for $\nu\in\{\alpha,\beta,\gamma\}$.

\begin{theorem} \label{thhomologyXnotwist} 
 The homology of $X$ with coefficient in $\mathbb{Z}$ canonically identifies with the homology of the complex $C$. 
 In particular:
 \begin{equation*}
  H_1(X)  \simeq \frac{H_1(\Sigma)}{L_\alpha+L_\beta+L_\gamma} ; \
  H_2(X)  \simeq \frac{L_\alpha\cap(L_\beta+ L_\gamma)}{(L_\alpha\cap L_\beta)+(L_\alpha\cap L_\gamma)}; \
  H_3(X) \simeq L_\alpha\cap L_\beta\cap L_\gamma.
 \end{equation*}
\end{theorem}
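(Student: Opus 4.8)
The plan is to reconstruct $X$ from its trisection as the union $X=X_1\cup X_2\cup X_3$ of the three four--dimensional handlebodies, glued along the three--dimensional handlebodies $H_\nu$ and the central surface $\Sigma$, and to run a Mayer--Vietoris argument adapted to this triple union (in the spirit of Ranicki's signature computation mentioned in the introduction). Concretely I would thicken each $X_i$ to an open set $U_i\simeq X_i$ with $U_i\cap U_j\simeq H_\nu$ and $U_1\cap U_2\cap U_3\simeq\Sigma$, and consider the associated generalized Mayer--Vietoris spectral sequence converging to $H_*(X)$. The only homological inputs needed are standard: a genus $g$ handlebody satisfies $H_1(H_\nu)\cong H_1(\Sigma)/L_\nu$ and $H_2(H_\nu,\Sigma)\cong L_\nu$ (via the inclusion $L_\nu\hookrightarrow H_1(\Sigma)$); each $X_i\simeq\natural^{k}(S^1\times B^3)$ has free homology concentrated in degrees $0$ and $1$, with $H_1(X_i)\cong H_1(\partial X_i)$; and $\partial X_i$, carrying a genus $g$ Heegaard splitting, satisfies $H_1(\partial X_i)\cong H_1(\Sigma)/(L_\nu+L_{\nu'})$ and $H_2(\partial X_i)\cong L_\nu\cap L_{\nu'}$.

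Since all the pieces have free homology I would take vertical homology first. The resulting $E^1$--page has three columns and is concentrated in the rows $q=0,1,2$: row $q=0$ is the reduced simplicial chain complex of the nerve of the cover, a $2$--simplex, hence acyclic except for a $\mathbb{Z}$ in degree $0$; row $q=2$ is a single $\mathbb{Z}$ at the triple--overlap position (the fundamental class); and row $q=1$ is the three--term complex $H_1(\Sigma)\to\bigoplus_\nu H_1(H_\nu)\to\bigoplus_i H_1(X_i)$ with the alternating--sum maps. A short verification shows that every higher differential vanishes on the relevant entries and that in each total degree at most one $E^\infty$--term is nonzero, so there is no extension problem and $H_*(X)$ is the homology of the single complex
\[
\mathbb{Z}\ \oplus\ \Bigl(H_1(\Sigma)\to \bigoplus_\nu H_1(H_\nu)\to \bigoplus_i H_1(X_i)\Bigr)\ \oplus\ \mathbb{Z},
\]
with the two copies of $\mathbb{Z}$ placed in degrees $4$ and $0$.

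It then remains to identify this with $H_*(C)$. On the one hand I would compute $H_*(C)$ directly: $H_4(C)=H_0(C)=\mathbb{Z}$, $H_3(C)=\ker\zeta=L_\alpha\cap L_\beta\cap L_\gamma$, $H_1(C)=\operatorname{coker}\iota=H_1(\Sigma)/(L_\alpha+L_\beta+L_\gamma)$, and $H_2(C)=\ker\iota/\operatorname{im}\zeta$, which the assignment $(a,b,c)\mapsto a=-(b+c)$ identifies with $\dfrac{L_\alpha\cap(L_\beta+L_\gamma)}{(L_\alpha\cap L_\beta)+(L_\alpha\cap L_\gamma)}$ after a diagram chase checking it is well defined, injective and surjective. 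On the other hand, using the presentations $H_1(H_\nu)=H_1(\Sigma)/L_\nu$ and $H_1(X_i)=H_1(\Sigma)/(L_\nu+L_{\nu'})$, a direct computation of the kernel and cokernel of the three--term complex above recovers $L_\alpha\cap L_\beta\cap L_\gamma$ and $H_1(\Sigma)/(L_\alpha+L_\beta+L_\gamma)$, and of its middle homology again $\frac{L_\alpha\cap(L_\beta+L_\gamma)}{(L_\alpha\cap L_\beta)+(L_\alpha\cap L_\gamma)}$. To upgrade these matching isomorphisms to the canonical identification asserted by the theorem, I would exhibit an explicit chain homotopy equivalence between $C$ and the Mayer--Vietoris complex, built from the short exact sequences $0\to L_\nu\to H_1(\Sigma)\to H_1(H_\nu)\to0$ together with $L_\nu\cong H_2(H_\nu,\Sigma)$ and $L_\nu\cap L_{\nu'}\cong H_2(\partial X_i)$; this simultaneously produces the explicit geometric representatives of homology classes advertised in the introduction.

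The main obstacle is twofold. First, the careful bookkeeping of the triple intersection: setting up the generalized Mayer--Vietoris with the correct open thickenings, identifying the nerve and tracking the alternating--sum maps with their signs, so that the three--term complex is exactly the one above. Second, and more delicate, is the passage from the \emph{quotient} objects $H_1(\Sigma)/L_\nu$ and $H_1(\Sigma)/(L_\nu+L_{\nu'})$ that the spectral sequence naturally produces to the \emph{sub}-objects $L_\nu$ and $L_\nu\cap L_{\nu'}$ appearing in $C$: constructing the chain homotopy equivalence between the ``quotient'' complex and $C$, compatibly with the geometry of $\Sigma$ and the handlebodies, is where the real work lies and is what turns the separate computations of the $H_n(X)$ into the clean statement of the theorem.
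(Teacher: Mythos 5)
Your outline is sound, but it follows a genuinely different route from the paper. You cover $X$ by thickenings of the three $4$--dimensional pieces $X_i$ and run the generalized Mayer--Vietoris spectral sequence for this three-element cover (nerve a $2$--simplex); after the degeneration and extension checks you sketch, this produces a three-term complex built from the \emph{quotients} $H_1(H_\nu)\cong H_1(\Sigma)/L_\nu$ and $H_1(X_i)\cong H_1(\Sigma)/(L_\nu+L_{\nu'})$. The paper instead uses a single Mayer--Vietoris pair: it takes $N$ to be a regular neighborhood of the spine $Y=H_\alpha\cup H_\beta\cup H_\gamma$, so that $X\setminus\mathrm{Int}\,N$ is the disjoint union of the $X_i$ and $\partial N=\partial X_1\sqcup\partial X_2\sqcup\partial X_3$, and combines the two-piece sequence for $X=(X\setminus\mathrm{Int}\,N)\cup_{\partial N}N$ with the long exact sequence of the pair $(Y,\Sigma)$ and the Heegaard-splitting sequences for each $\partial X_j$. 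What that choice buys is exactly the point you correctly flag as the ``real work'': the pair $(Y,\Sigma)$ gives $H_2(Y,\Sigma)\cong L_\alpha\oplus L_\beta\oplus L_\gamma$ and the Heegaard splittings give $H_2(\partial X_j)\cong L_\nu\cap L_{\nu'}$ directly as \emph{sub}modules of $H_1(\Sigma)$, so the complex $C$ appears verbatim in the resulting four-term sequence and no quotient-to-submodule translation, \v Cech sign bookkeeping, or chain homotopy equivalence is needed. Your route --- which is close in spirit to Ranicki's triple-union method cited in the introduction --- does work, but the diagram chases identifying the middle homology of the quotient complex with $\bigl(L_\alpha\cap(L_\beta+L_\gamma)\bigr)/\bigl((L_\alpha\cap L_\beta)+(L_\alpha\cap L_\gamma)\bigr)$ and the surjectivity onto $\bigoplus_iH_1(X_i)$ are genuinely nontrivial and must be carried out. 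One small slip: row $q=0$ of your $E^1$--page is the \emph{unreduced} chain complex of the nerve (homology $\Z$ in degree $0$), not the reduced one (which would be acyclic); this is what you actually use, so the conclusion is unaffected.
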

The spaces in the complex $C$ can be understood as spaces of chains in $X$. This will be made explicit in Section \ref{secintform}. 
Note that $C$ is not the same as the complex considered in  \cite{FKSZ18}: ours is symmetric in the three subspaces $L_\alpha,L_\beta$ and $L_\gamma$. Moreover, the expression of $H_1(\Sigma;\Z)$ was not provided in \cite{FKSZ18}.

\begin{example} \label{ex}
Consider the trisection diagram of $S^2 \times S^2$ given in Figure \ref{fig}, where the curves $\alpha$ are in red, the curves $\beta$ in blue and the curves $\gamma$ in green.
One easily checks that $L_\alpha \cap L_\beta \cap L_\gamma = \{ 0 \}$ and $L_\alpha+L_\beta+L_\gamma\simeq H_1(\Sigma;\Z)$, giving $H_1(S^2 \times S^2)=0=H_3(S^2 \times S^2)$. Similarly, $H_2(S^2 \times S^2)\simeq L_\gamma \simeq \Z^2$.
\end{example}

Now fix a non-trivial morphism $\varphi: H_1(X;\Z) \rightarrow G$, where $G$ is a finitely generated free abelian group. Thanks to Theorem \ref{thhomologyXnotwist}, it induces a morphism $H_1(\Sigma;\Z) \rightarrow G$, still denoted $\varphi$. 
Let $\F=\Q(G)$ be the quotient field of the group ring $\Lambda=\Z[G]$. Let $R$ stand for $\Lambda$ or $\F$. For $\nu\in\{\alpha,\beta,\gamma\}$, let $L_\nu^\pr$ be the subspace of $H_1^\varphi(\Sigma;R)$ generated by the $R$--homology classes of the curves $\nu_i$. Let $C^\pr$ be the following complex: 
\begin{equation*} 
 0\hspace{-1.2pt} \to (L_\alpha^\pr \cap L_\beta^\pr)\oplus(L_\beta^\pr \cap L_\gamma^\pr)\oplus(L_\gamma^\pr \cap L_\alpha^\pr)
 \fl{\zeta} L_\alpha^\pr \oplus L_\beta^\pr \oplus L_\gamma^\pr \fl{\iota} H_1^\varphi(\Sigma;R) \fl{0} H_0^\varphi(X;R) \to \hspace{-1.2pt} 0,
\end{equation*}
where $\zeta(x,y,z)=(x-z,y-x,z-y)$ and $\iota$ is defined by the inclusions $L_\nu^\pr\hookrightarrow H_1^\varphi(\Sigma;R)$.
Note that, with coefficients in $\F$, we have $H_0^\varphi(X;\F)=0$.

\begin{theorem} \label{thtwistedhomologyX}
 The homology of $(X;\varphi)$ with coefficients in $R$ canonically identifies with the homology of the complex $C^\pr$.
 In particular, with coefficients in $R$:
 $$H_1^\varphi(X)  \simeq \frac{H_1^\varphi(\Sigma)}{L_\alpha^\pr+L_\beta^\pr+L_\gamma^\pr}, \ \,
  H_2^\varphi(X) 
    \simeq \frac{L^\pr_\alpha \cap(L^\pr_\beta + L^\pr_\gamma)}{(L^\pr_\alpha \cap L^\pr_\beta)+ (L^\pr_\alpha \cap L^\pr_\gamma)}, \ \, 
  H_3^\varphi(X)  \simeq L_\alpha^\pr \cap L_\beta^\pr \cap L_\gamma^\pr.$$
\end{theorem}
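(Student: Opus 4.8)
The plan is to mimic the non-twisted argument of Theorem~\ref{thhomologyXnotwist}, replacing integral chains by chains with coefficients in $R$ and being careful that the relevant maps now have matrix entries in $\Lambda$ (or $\F$) rather than in $\Z$. First I would recall from Section~\ref{secprelim} the Mayer--Vietoris-type decomposition coming from the trisection $X = X_1 \cup X_2 \cup X_3$. Each $X_i$ is a $4$--dimensional handlebody, so up to homotopy it is a wedge of circles; its twisted homology $H_*^\varphi(X_i;R)$ is therefore concentrated in degrees $0$ and $1$ and is easy to identify. Likewise each pairwise intersection $X_i\cap X_j$ is a $3$--dimensional handlebody with meridian system one of the curve families, so $H_*^\varphi(X_i\cap X_j;R)$ is concentrated in degrees $0,1$, with $H_1$ identified with the span of the relevant curves inside $H_1^\varphi(\Sigma;R)$ --- this is exactly where the subspaces $L_\nu^\pr$ enter. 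The triple intersection is $\Sigma$ itself, with $H_*^\varphi(\Sigma;R)$ the twisted homology of the surface.

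The key step is then to assemble the spectral sequence (equivalently, the iterated Mayer--Vietoris sequence) for the triple union $X = X_1\cup X_2\cup X_3$. Following the approach of Ranicki~\cite{Ra09} referred to in the introduction, this yields a small complex whose terms are built from the homologies of $\Sigma$, the three handlebodies $X_i\cap X_j$, and the three handlebodies $X_i$. After discarding contractible summands and accounting for the fact that with $\F$--coefficients $H_0^\varphi(X;\F)=0$ while $H_*^\varphi(X_i)$ degenerates (since a nontrivial $\varphi$ restricted to a free summand is still possibly trivial --- here one must treat the $\Lambda$-- and $\F$--cases in parallel, exactly as the statement does by letting $R$ be either), I would identify the surviving complex with $C^\pr$: the map $\iota$ records the inclusions $L_\nu^\pr\hookrightarrow H_1^\varphi(\Sigma;R)$ coming from the boundary connecting maps, and $\zeta(x,y,z)=(x-z,y-x,z-y)$ is the alternating-sum differential of the triple union, just as in $C$. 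The degree-$0$ term $H_0^\varphi(X;R)$ appears as the cokernel contribution in bottom degree, matching the ``$\fl{0} H_0^\varphi(X;R)$'' at the right of $C^\pr$; the absence of a $\Z$ on the left (present in $C$) reflects that $H_4^\varphi(X;R)=0$ for nontrivial $\varphi$, by Poincar\'e duality and $H_0^\varphi(X;R)$ being torsion, so the top of the complex collapses.

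Granting this identification, the ``in particular'' formulas for $H_1^\varphi, H_2^\varphi, H_3^\varphi$ are a purely algebraic computation of the homology of $C^\pr$: $H_3^\varphi(X)$ is the kernel of $\zeta$ on the leftmost term, which is $\{(x,x,x): x\in L_\alpha^\pr\cap L_\beta^\pr\cap L_\gamma^\pr\}\cong L_\alpha^\pr\cap L_\beta^\pr\cap L_\gamma^\pr$; $H_1^\varphi(X)$ is $\operatorname{coker}\iota = H_1^\varphi(\Sigma;R)/(L_\alpha^\pr+L_\beta^\pr+L_\gamma^\pr)$; and $H_2^\varphi(X) = \ker\iota/\operatorname{im}\zeta$, which a short diagram chase rewrites as $\bigl(L_\alpha^\pr\cap(L_\beta^\pr+L_\gamma^\pr)\bigr)/\bigl((L_\alpha^\pr\cap L_\beta^\pr)+(L_\alpha^\pr\cap L_\gamma^\pr)\bigr)$, using the symmetry of $\zeta$ in the three subspaces. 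The main obstacle is the bookkeeping in the Mayer--Vietoris assembly: one must check that the connecting homomorphisms really do send the fundamental classes of the meridian curves to their classes in $H_1^\varphi(\Sigma;R)$ with the correct signs (so that the differential is precisely $\zeta$ and not some twisted variant), and that the higher terms of the spectral sequence for the triple union vanish or cancel as claimed. Handling the $\Lambda$-- and $\F$--coefficient cases uniformly --- in particular making sure that ``subspace generated by the curves'' behaves well over the non-field ring $\Lambda$, where $H_1^\varphi(\Sigma;\Lambda)$ need not be free --- will require the mild flatness/exactness observations already set up in Section~\ref{secprelim}.
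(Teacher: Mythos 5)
Your proposal takes a genuinely different route from the paper. You propose to run the Mayer--Vietoris spectral sequence (or iterated generalized Mayer--Vietoris) directly for the triple cover $X=X_1\cup X_2\cup X_3$, in the spirit of the cited Ranicki reference. The paper instead works with a two-piece Mayer--Vietoris sequence for $X=(X\setminus\Int{N})\cup_{\partial N}N$, where $N$ is a regular neighborhood of the spine $Y=H_\alpha\cup H_\beta\cup H_\gamma$, and then computes $H^\varphi_*(N)$ via the pair $(Y,\Sigma)$ and $H^\varphi_*(\partial N)$ via the Heegaard splittings of the $\partial X_j$. That gives a single long exact sequence (no spectral sequence subtleties) which is then split into two short pieces $\HH_1$, $\HH_2$. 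The paper also reuses the same ``projection-inverse'' trick from the non-twisted proof to show $H_1^\varphi(N)\to H_1^\varphi(X)$ is an isomorphism; your spectral-sequence version would instead give $H_1^\varphi(X)$ as a cokernel $\operatorname{coker}(\bigoplus_\nu H_1^\varphi(H_\nu)\to\bigoplus_i H_1^\varphi(X_i))$, which still needs a separate argument to match the claimed form.

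There are, however, genuine gaps in your write-up rather than just brevity. First and most seriously, the identification of the surviving $E_1$-row with the complex $C^\pr$ is not correct as stated: the Čech differential runs from larger to smaller intersections, i.e.\ $H_1^\varphi(\Sigma)\to\bigoplus_\nu H_1^\varphi(H_\nu)\to\bigoplus_i H_1^\varphi(X_i)$ (restriction/quotient maps), whereas $C^\pr$ has arrows going the other way, $\bigoplus(L_\nu^\pr\cap L_{\nu'}^\pr)\to\bigoplus L_\nu^\pr\fl{\iota} H_1^\varphi(\Sigma)$, built from the kernels $L_\nu^\pr=\ker(H_1^\varphi(\Sigma)\to H_1^\varphi(H_\nu))$ via genuine inclusions. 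These two complexes should be quasi-isomorphic, but proving that is real work you omit; the statement that ``$\iota$ records the inclusions coming from the boundary connecting maps'' conflates two different maps. Second, your degeneration argument only works over $\F$: with $\Lambda$-coefficients the row $q=0$ of the $E_1$ page need not vanish (each $H_0^\varphi(X_i;\Lambda)$ is a nonzero torsion module), so concentration of the $E_1$ page in the single row $q=1$ fails and degeneration at $E_2$ is no longer automatic. Third, the cover by the $X_i$ is closed, not open, so invoking a Mayer--Vietoris spectral sequence requires either a CW structure adapted to the trisection or a fattening argument, neither of which is addressed. All three issues are circumvented by the paper's choice of the $N$-versus-$X\setminus N$ decomposition, which lands directly on the pieces $L_\nu^\pr$, $L_\nu^\pr\cap L_{\nu'}^\pr$ of $C^\pr$ through Lemmas~\ref{lemmapartialN} and~\ref{lemmahomologyN}, uniformly over $R=\Lambda$ or $\F$.
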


This result provides in particular an expression of the Alexander module $H_1^\varphi(X;\Lambda)$. However the $\Lambda$--module $H_1^\varphi(\Sigma;\Lambda)$ and its submodules $L_\nu^\pl$ are not free modules in general, so that we do not get a free presentation of the Alexander module. However, one can compute the Alexander polynomial of $(X;\varphi)$ using the following trick. Let $B$ be a 4--ball in $X$ that intersects $\Sigma$ transversely along a disk $D$ disjoint from the $3g$ curves of the diagram. Set $\p X=X\setminus\Int B$ and $\p\Sigma=\Sigma\setminus\Int D$. Fix a base-point $\star\in\partial D$. 
One easily checks that the $\Lambda$--modules $H_1^\varphi(X;\Lambda)$ and $H_1^\varphi(\p X,\star;\Lambda)$ have the same $\Lambda$--torsion submodule, so that $(X;\varphi)$ and $(\p X,\star;\varphi)$ have the same Alexander polynomial.

For $\nu\in\{\alpha,\beta,\gamma\}$, let $\p L_\nu^\pl$ be the subspace of $H_1^\varphi(\p\Sigma,\star;\Lambda)$ generated by the homology classes of the curves $\nu_i$. 
We show in Lemma \ref{lemmaTorsXpX} that $H_1^\varphi(\p\Sigma,\star;\Lambda)$, $\p L^\pl_\nu$ and $\p L^\pl_\nu\cap \p L^\pl_{\nu'}$ are free $\Lambda$--modules. 
As previously, we consider a complex of $\Lambda$--modules $\p C^\pl$: 
$$ 0 \to (\p L^\pl_\alpha \cap \p L^\pl_\beta)\oplus(\p L^\pl_\beta \cap \p L^\pl_\gamma)\oplus(\p L^\pl_\gamma \cap \p L^\pl_\alpha)
 \fl{\zeta} \p L^\pl_\alpha \oplus \p L^\pl_\beta \oplus \p L^\pl_\gamma  \fl{\iota} H_1^\varphi(\p\Sigma,\star;\Lambda) \to 0.$$
With the very same proof as for Theorem \ref{thtwistedhomologyX}, one shows the following result.

\begin{theorem} \label{thAlexander}
 The $\Lambda$--homology of $(\p X,\star;\varphi)$ canonically identifies with the homology of the complex $\p C^\pl$. In particular, the Alexander $\Lambda$--module of $(\p X,\star;\varphi)$ admits the finite presentation:
 $$\p L_\alpha^\pl\oplus\p L_\beta^\pl\oplus \p L_\gamma^\pl \to H_1^\varphi(\p \Sigma,\star;\Lambda) \to H_1^\varphi(\p X,\star;\Lambda)  \to 0.$$
\end{theorem}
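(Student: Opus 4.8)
The plan is to mirror, almost verbatim, the argument that establishes Theorem~\ref{thtwistedhomologyX}, so the real content is to observe that every step of that proof goes through when one replaces the closed pieces by the punctured ones and records a base-point. Concretely, the trisection of $X$ restricts to a decomposition $\p X=\p X_1\cup\p X_2\cup\p X_3$, where the $4$--ball $B$ has been removed in a way that is compatible with the three sectors; since $D$ is disjoint from all $3g$ curves, the handlebodies $H_\alpha,H_\beta,H_\gamma$ are unchanged and $\p\Sigma=\Sigma\setminus\Int D$ is a once-punctured genus $g$ surface carrying the same systems of curves. First I would set up the relative twisted chain complex of $(\p X,\star)$ using this decomposition exactly as in the proof of Theorem~\ref{thtwistedhomologyX}: a Mayer--Vietoris / Čech spectral sequence (or the explicit small model built from the handlebodies) computes $H_\ast^\varphi(\p X,\star;\Lambda)$ from the twisted homologies of $\Sigma$, the $H_\nu$, and the $X_i$, relative to $\star$. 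The punctured surface and pointed sectors only change the bottom of the complex: $H_0^\varphi(\p\Sigma,\star;\Lambda)=0$ and likewise for the handlebodies and sectors, which is precisely why the rightmost $\Z$ (resp.\ $H_0^\varphi(X;R)$) term disappears and $\p C^\pl$ has the shorter shape displayed.

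Next I would identify the $E_1$ (or the associated graded of the small model) with the complex $\p C^\pl$. The curves $\nu_i$ bound meridian disks in $H_\nu$, and the relative twisted first homology of a $3$--dimensional handlebody rel a boundary base-point is the free $\Lambda$--module on the meridians, mapping into $H_1^\varphi(\p\Sigma,\star;\Lambda)$ with image $\p L_\nu^\pl$; the triple-overlap contributions along $\Sigma$ assemble, via the inclusion-induced maps into the three pairwise handlebodies, into $(\p L^\pl_\alpha\cap\p L^\pl_\beta)\oplus(\p L^\pl_\beta\cap\p L^\pl_\gamma)\oplus(\p L^\pl_\gamma\cap\p L^\pl_\alpha)$ with differential $\zeta(x,y,z)=(x-z,y-x,z-y)$, exactly as in the closed case. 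Freeness of $H_1^\varphi(\p\Sigma,\star;\Lambda)$, $\p L^\pl_\nu$ and $\p L^\pl_\nu\cap\p L^\pl_{\nu'}$ is granted by Lemma~\ref{lemmaTorsXpX}, so there are no $\mathrm{Tor}$ terms and the identification of $H_\ast^\varphi(\p X,\star;\Lambda)$ with $H_\ast(\p C^\pl)$ is canonical. Reading off $H_0$ and $H_1$ of $\p C^\pl$ then yields the finite presentation
$$\p L_\alpha^\pl\oplus\p L_\beta^\pl\oplus\p L_\gamma^\pl\xrightarrow{\ \iota\ } H_1^\varphi(\p\Sigma,\star;\Lambda)\to H_1^\varphi(\p X,\star;\Lambda)\to 0,$$
since the image of $\zeta$ lands inside $\ker\iota$ and the cokernel of $\iota$ is $H_1$.

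The one point that needs genuine care—rather than a reference to the proof of Theorem~\ref{thtwistedhomologyX}—is checking that puncturing $X$ at $B$ is compatible with the trisection, i.e.\ that one really can choose $B$ so that $B\cap X_i$ is a $4$--ball for each $i$, $B\cap(X_i\cap X_j)$ is a $3$--ball, and $B\cap\Sigma=D$; this is what makes the decomposition $\p X=\bigcup\p X_i$ behave formally like the trisection and keeps all the local homological inputs (handlebodies rel a point) in the stated free form. Given such a $B$ (which exists: take a standard ball around an interior point of $D$ and isotope), every other step is a cosmetic modification of the closed-case argument, which is exactly what the sentence ``with the very same proof as for Theorem~\ref{thtwistedhomologyX}'' is asserting. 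The remark preceding the theorem already reduces the Alexander polynomial of $(X;\varphi)$ to that of $(\p X,\star;\varphi)$, so the presentation above is what one feeds to a Fitting-ideal computation to extract $\Delta_{(X;\varphi)}$ from a trisection diagram.
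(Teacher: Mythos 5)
Your proposal is correct and matches the paper's intent exactly: the paper gives no separate proof of Theorem~\ref{thAlexander} beyond the statement that it follows ``with the very same proof as for Theorem~\ref{thtwistedhomologyX},'' and your plan of repeating the Mayer--Vietoris argument in the punctured setting, with Lemma~\ref{lemmaTorsXpX} supplying the freeness needed for the presentation to be finite, is precisely what is meant. The one small mismatch in emphasis is where you locate the care about the ball $B$: the paper's argument decomposes $X$ as $(X\setminus\Int N)\cup_{\partial N} N$ rather than along the trisection sectors $X_i$, so what one really needs is that $B$ sits inside $\Int N$ meeting $Y$ and $\Sigma$ in standard pieces (leaving $X\setminus N$ and $\partial N$ untouched), which is a milder and more directly checkable condition than $B\cap X_i$ being a $4$--ball for each $i$ --- but both are achievable by isotopy and lead to the same conclusion.
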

This result provides a presentation matrix of the Alexander $\Lambda$--module of $(\p X,\star;\varphi)$ from which one can compute the Alexander polynomial of $(\p X,\star;\varphi)$ and $(X;\varphi)$.

\subsection{Intersection forms}

For $R=\Lambda$ or $\F$, we express the intersection form of $(X;\varphi)$ using the expression of $H_2^\varphi(X;R)$ given by Theorem~\ref{thtwistedhomologyX} and the intersection form of $(\Sigma;\varphi)$. The spaces $L_\nu^\pr$ coincide with $$\ker\left(H_1^\varphi(\Sigma;R)\fl{\scriptstyle{incl_*}} H_1^\varphi(H_\nu;R)\right).$$
Define the hermitian form
 $$\lambda^\varphi: \frac{L^\pr_\alpha \cap (L^\pr_\beta + L^\pr_\gamma)}{(L^\pr_\alpha\cap L^\pr_\beta)+ (L^\pr_\alpha\cap L^\pr_\gamma)} 
 \times \frac{L^\pr_\alpha \cap (L^\pr_\beta + L^\pr_\gamma)}{ (L^\pr_\alpha \cap L^\pr_\beta)+ (L^\pr_\alpha \cap L^\pr_\gamma)} \longrightarrow R$$
as follows. For $a,a'  \in L_\alpha^\pr \cap (L_\beta^\pr + L_\gamma^\pr)$ and $b\in L_\beta^\pr$, $c\in L_\gamma^\pr$ such that $a+b+c=0$, 
set $$ \lambda^\varphi(a,a'):= \langle c, a' \rangle^\varphi_{\Sigma}.$$ 
Note that permuting the roles of $\alpha$, $\beta$ and $\gamma$ in this construction gives the same form, up to the sign of the permutation. 
This is related to the fact that the coorientation of $\Sigma$ ---defined by the orientations of $\Sigma$ and $X$--- induces a cyclic order on the $X_i$ and the~$H_\nu$. 
\begin{theorem} \label{thtwistedintform} 
Let $\langle \cdot,\cdot \rangle_{X}^\varphi$ be the intersection form of $(X;\varphi)$.
There is an isomorphism
$$ \left(H_2^\varphi(X;R), \langle \cdot,\cdot \rangle_{X}^\varphi \right) \simeq \left(\frac{L^\pr_\alpha \cap 
(L^\pr_\beta + L^\pr_\gamma)}{ (L^\pr_\alpha \cap L^\pr_\beta)+ (L^\pr_\alpha \cap L^\pr_\gamma)} , \lambda^\varphi \right).$$
\end{theorem}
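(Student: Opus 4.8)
The plan is to realize the abstract isomorphism of Theorem~\ref{thtwistedhomologyX} for $H_2$ at the level of chains in $X$, and then to compute the geometric intersection number of two such chains directly, reducing it to an intersection computation on $\Sigma$. Concretely, given a class in $H_2^\varphi(X;R)$, I would first produce an explicit surface (cycle) representing it. Take $a\in L_\alpha^\pr\cap(L_\beta^\pr+L_\gamma^\pr)$ with $a+b+c=0$ for $b\in L_\beta^\pr$, $c\in L_\gamma^\pr$. Since $a\in L_\alpha^\pr=\ker(H_1^\varphi(\Sigma;R)\to H_1^\varphi(H_\alpha;R))$, the $1$-cycle representing $a$ on $\Sigma$ bounds a (twisted) $2$-chain $F_\alpha$ in the handlebody $H_\alpha=X_3\cap X_1$; likewise $b$ bounds $F_\beta$ in $H_\beta=X_1\cap X_2$ and $c$ bounds $F_\gamma$ in $H_\gamma=X_2\cap X_3$. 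Because $a+b+c=0$ on $\Sigma$, the $1$-chains fit together (after a further $2$-chain correction on $\Sigma$ itself, absorbing the boundary relation) so that $F_\alpha\cup F_\beta\cup F_\gamma$ is a closed $2$-cycle $S_a$ in $X$; I would check that its class in $H_2^\varphi(X;R)$ is exactly the image of $a$ under the isomorphism of Theorem~\ref{thtwistedhomologyX}, and that changing the choices of $b,c$ and of the bounding chains $F_\nu$ changes $S_a$ by a boundary, so the class is well-defined modulo $(L_\alpha^\pr\cap L_\beta^\pr)+(L_\alpha^\pr\cap L_\gamma^\pr)$.

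Next I would compute $\langle S_a, S_{a'}\rangle_X^\varphi$. The key simplification is that one can push $S_{a'}$ off $\Sigma$ into a collar on one side, so that $S_a\cap S_{a'}$ is concentrated near $\Sigma$: intersections only occur where the ``boundary traces'' on $\Sigma$ meet. More precisely, using the three handlebodies one pushes $S_{a'}$ so that $F_{\alpha}'$, $F_\beta'$, $F_\gamma'$ are displaced transversally, and the total intersection collects to a sum of local contributions along the curves on $\Sigma$ representing $a,b,c$ and $a',b',c'$. After cancelling the contributions that pair a piece with itself across $\Sigma$ (these come in cancelling pairs by the collar/co-orientation argument), what survives is the algebraic intersection on $\Sigma$ of the ``$\gamma$-part'' of one class with the full class $a'$ of the other, i.e. $\langle c, a'\rangle_\Sigma^\varphi$. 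This is where the asymmetry between $\alpha$, $\beta$, $\gamma$ enters and where the cyclic order from the co-orientation of $\Sigma$ makes the choice of $c$ (as opposed to $b$) canonical up to sign; I would make this bookkeeping precise with the sign convention fixed in the Conventions paragraph.

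I would then verify the remaining formal properties: that $\lambda^\varphi$ is well-defined on the quotient (independence of the representative $a'$ modulo $(L_\alpha^\pr\cap L_\beta^\pr)+(L_\alpha^\pr\cap L_\gamma^\pr)$ follows from $\langle L_\alpha^\pr\cap L_\beta^\pr,\, L_\gamma^\pr\rangle_\Sigma=0$ and similar vanishings, since curves lying in a common handlebody's kernel pair trivially after pushing into that handlebody), that it is hermitian (this should follow from symmetry of the intersection pairing on $X$ together with the identity $\langle c,a'\rangle_\Sigma^\varphi=\overline{\langle a,c'\rangle_\Sigma^\varphi}$ obtained by expanding $a=-b-c$, $a'=-b'-c'$ and using $\langle L_\alpha^\pr,L_\alpha^\pr\rangle$, $\langle L_\beta^\pr,L_\beta^\pr\rangle$, $\langle L_\gamma^\pr,L_\gamma^\pr\rangle$ vanish), and finally that the map $a\mapsto [S_a]$ is the isomorphism of Theorem~\ref{thtwistedhomologyX}, so that the intersection form transports as claimed. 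For $R=\F$ the chains live in a field-coefficient complex and everything is linear algebra over $\F$; for $R=\Lambda$ one has to be slightly more careful since $H_2^\varphi(X;\Lambda)$ need not be free, but the chain-level construction of $S_a$ and the intersection computation are insensitive to that.

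The main obstacle I expect is the chain-level bookkeeping in the second step: making rigorous the claim that $S_{a'}$ can be pushed so that $S_a\cap S_{a'}$ localizes to $\Sigma$ and that all contributions except $\langle c,a'\rangle_\Sigma^\varphi$ cancel. This requires carefully choosing compatible collars of $\Sigma$ in each $X_i$, tracking how the three pieces $F_\nu$ and $F_\nu'$ are pushed relative to one another, and controlling signs via the co-orientation-induced cyclic order on $X_1,X_2,X_3$. A clean way to organize it is to first do the untwisted case with $\Z$-coefficients (where $S_a$ is an honest embedded or immersed surface and the picture is geometric), identify the local model of an intersection point near $\Sigma$, and then observe that the twisted intersection number is computed by the same local models weighted by the relevant group-ring elements, so the twisted formula follows formally from the untwisted one.
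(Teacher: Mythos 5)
Your proposal captures the paper's overall strategy exactly: realize $H_2$ classes by explicit $2$--cycles built from pieces bounded in the three handlebodies plus a correcting $2$--chain on $\Sigma$, compute intersection numbers by localizing to $\Sigma$, and pass from the untwisted to the twisted case by observing that the curves, disks and paths all lift compatibly so the same local models compute the twisted pairing. The paper indeed proves the untwisted Proposition~\ref{propintform} first and then derives Theorem~\ref{thtwistedintform} in two lines exactly as your last paragraph suggests. Where you differ is in how the localization of $S_a\cap S_{a'}$ is made rigorous, which you rightly flag as the main obstacle. The paper avoids the collar-and-cancellation bookkeeping altogether by working in the explicit $D^2\times\Sigma$ model of Section~\ref{subsecReconstruction}: the $2$--cycle $S_q(\mu)$ is built from meridian disks over the boundary points $p_\nu$, rectangles $[p_\nu,q]\times\mu_\nu$, and a $2$--chain $T(\mu)$ supported in $\{q\}\times\Sigma$ for an interior ``hub'' point $q$. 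Choosing distinct hubs $q$ and $q'$ for $S_q(\mu)$ and $S_{q'}(\mu')$ so that the segments $[p_\gamma,q]$ and $[p_\alpha,q']$ cross only at $p_0$ forces the two $2$--cycles to meet transversally in the single fiber $\{p_0\}\times\Sigma$, where the intersection number is visibly $\langle\mu_\gamma,\mu_\alpha'\rangle_\Sigma=\lambda(\mu,\mu')$; there are no ``self''-contributions to cancel and the sign is fixed by orienting $D^2$ by $(\vec{p_0p_\alpha},\vec{p_0p_\gamma})$. So your proposal is correct in outline and reaches the same formula, but it would need the $D^2\times\Sigma$ trick (or an equivalent explicit geometric realization) to close the gap you yourself identify; the vaguer ``push into a collar and cancel'' argument is precisely what the product structure replaces.
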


The form $\lambda^\varphi$ is a hermitian version of a symmetric form introduced by Wall in \cite{Wa69}, which is involved in the similar result in the non-twisted setting \cite[Theorem 3.6]{FKSZ18}, corresponding to a trivial morphism $\varphi$. As noted in \cite[Remark~3.7]{FKSZ18}, the main theorem of \cite{Wa69} implies that the signature of the intersection form of $(X;\varphi)$ equals the signature of the form $\lambda^\varphi$. In the case of trisections, the above theorem says that not only the signatures coincide, but also the forms themselves.

\begin{proposition} \label{propintfromH13twisted}
 There is an isomorphism
 $$ \left(H_1^\varphi(X;R)\times H_3^\varphi(X;R); \langle \cdot,\cdot \rangle_{X}^\varphi \right) \simeq 
 \left(\frac{H_1^\varphi(\Sigma)}{L^\pr_\alpha + L^\pr_\beta + L^\pr_\gamma}\times(L^\pr_\alpha\cap L^\pr_\beta\cap L^\pr_\gamma); 
 \langle\cdot,\cdot\rangle^\varphi_\Sigma \right).$$
\end{proposition}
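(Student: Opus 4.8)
The plan is to build on the identifications of $H_1^\varphi(X;R)$ and $H_3^\varphi(X;R)$ furnished by Theorem~\ref{thtwistedhomologyX} and to trace through what the duality pairing becomes under those identifications. First I would make explicit, as announced after Theorem~\ref{thhomologyXnotwist}, the description of the chain-level representatives: an element of $H_3^\varphi(X;R)\simeq L_\alpha^\pr\cap L_\beta^\pr\cap L_\gamma^\pr$ is represented by a curve (or $R$--linear combination of curves) $\delta$ on $\Sigma$ that bounds in each of the three handlebodies $H_\alpha,H_\beta,H_\gamma$; capping $\delta$ off by the corresponding surfaces in, say, $H_\alpha$ and in $H_\gamma$ produces a closed $3$--cycle in $X_1\cup X_3$ representing the class. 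Dually, an element of $H_1^\varphi(X;R)\simeq H_1^\varphi(\Sigma;R)/(L_\alpha^\pr+L_\beta^\pr+L_\gamma^\pr)$ is represented simply by a $1$--cycle $\eta$ supported on $\Sigma$. The key point is that $\Sigma$ is already transverse to the $3$--cycle built from $\delta$, since $\delta\subset\Sigma$, so the algebraic intersection $\langle\eta,\delta\rangle^\varphi_X$ can be computed entirely inside $\Sigma$.

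Next I would verify that this recipe is well defined, i.e.\ independent of the choices. If $\eta$ is modified by an element of $L_\alpha^\pr+L_\beta^\pr+L_\gamma^\pr$, say by a curve $\eta'$ bounding in $H_\alpha$, then $\eta'$ bounds a surface $P\subset H_\alpha$; one pushes $P$ slightly off $\Sigma$ into $X_1$ and uses it as a $2$--chain whose boundary realizes the change, so the intersection with the $3$--cycle (which can be isotoped to live in $X_3\cup H_\gamma$, away from the interior of $H_\alpha$) is unaffected. A symmetric argument handles changes of the representative of the $H_3$ class. The upshot is a well-defined pairing $H_1^\varphi(X;R)\times H_3^\varphi(X;R)\to R$ which, under the Theorem~\ref{thtwistedhomologyX} identifications, is literally $(\bar\eta,\delta)\mapsto\langle\eta,\delta\rangle^\varphi_\Sigma$. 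Since Poincaré duality over $R$ guarantees that $\langle\cdot,\cdot\rangle^\varphi_X$ on $H_1^\varphi(X;R)\times H_3^\varphi(X;R)$ is a perfect (sesquilinear) pairing, this identifies the form.

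The step I expect to be the main obstacle is the careful bookkeeping of orientations and signs, in particular making sure that the co-orientation conventions set in the Conventions paragraph, the choice of which handlebodies one caps off in, and the cyclic order on the $X_i$ all conspire to give exactly $\langle\cdot,\cdot\rangle^\varphi_\Sigma$ on the nose (rather than its negative or conjugate). A secondary technical point is the transversality/general-position argument needed to push surfaces bounding the $L_\nu^\pr$ off $\Sigma$ into the correct piece $X_i$ while keeping everything transverse; this is routine but must be stated. I would also remark that, since $H_1^\varphi(X;R)$ and $H_3^\varphi(X;R)$ can fail to have the same rank when $R=\Lambda$, the "isomorphism of pairings" in the statement should be read as an isomorphism of the two sides as spaces equipped with the sesquilinear pairing between their respective factors; the perfectness coming from duality with $\F$--coefficients, together with the identifications, then gives the statement over $\Lambda$ as well by naturality.
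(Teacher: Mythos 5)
Your strategy --- express $H_1^\varphi(X)$ and $H_3^\varphi(X)$ via Theorem~\ref{thtwistedhomologyX}, represent the $H_1$--class by a $1$--cycle $\eta$ supported in $\Sigma$ and the $H_3$--class by a $3$--cycle meeting $\Sigma$ transversally in a representative of $\mu\in L_\alpha^\pr\cap L_\beta^\pr\cap L_\gamma^\pr$, then read off the pairing inside $\Sigma$ --- is exactly the paper's: it proves the untwisted analogue (Proposition~\ref{propintfromH13}) by an explicit $3$--cycle construction and then observes that the chains lift to the cover since the curves and disks involved are contractible or have homology class in $\ker\varphi$.

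However, the $3$--cycle you describe does not exist as stated, and this is a genuine gap. Capping the $1$--cycle $\delta$ by surfaces $P_\alpha\subset H_\alpha$ and $P_\gamma\subset H_\gamma$ produces a closed \emph{2}--cycle $P_\alpha-P_\gamma$ in $\partial X_3=H_\gamma\cup_\Sigma H_\alpha$, not a $3$--cycle, and the resulting class of $H_3^\varphi(X)$ cannot be represented by a cycle supported in $X_1\cup X_3$. The correct construction requires all three pieces: take $2$--chains $P_\nu\subset H_\nu$ with $\partial P_\nu=\mu_\nu$ for each $\nu\in\{\alpha,\beta,\gamma\}$, fill the three $2$--cycles $P_\alpha-P_\beta$, $P_\beta-P_\gamma$, $P_\gamma-P_\alpha$ by $3$--chains $M_1\subset X_1$, $M_2\subset X_2$, $M_3\subset X_3$ (using $H_2(X_i)=0$), and add a prism $M_0=V\times\mu_\alpha$ near $\Sigma$ to close up the cycle so that it meets $\Sigma$ transversally exactly in $\mu_\alpha$; this is the content of the paper's Figure~\ref{fig3cycle}. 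Because $M$ passes through every $X_i$, it cannot be isotoped off the interior of $H_\alpha$ as you propose, so the geometric well-definedness argument you sketch does not go through. Fortunately it is also unnecessary: once the chain-level maps are seen to realize the isomorphisms of Theorem~\ref{thtwistedhomologyX}, the intersection pairing is already defined on homology and independence of representatives is automatic; neither that check nor the appeal to perfectness of the form is needed.
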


\subsection{Abelian torsions}

We now state the result for the torsion. Consider the complex $C^\pf$ defined before Theorem \ref{thtwistedhomologyX}.

\begin{theorem} \label{thtorsion}
 There exists an $\F$-basis $c$ for $C^\pf$ such that for any homology $\F$-basis $h$ of $X$ and $C^\pf$, the following holds:
 $$\tau^\varphi(X;h)=\tau(C^\pf;c,h) \quad\textrm{in }\F/\pm\varphi(H_1(X)).$$
\end{theorem}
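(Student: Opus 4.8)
The plan is to build a geometric chain complex computing $H^\varphi_*(X;\F)$ from the trisection structure, equip it with a preferred basis coming from the handlebody/surface decomposition, and then perform a sequence of elementary algebraic reductions (simple chain homotopy equivalences) that transform this geometric complex into the short complex $C^\pf$ while keeping track of the torsion at each step. Concretely, I would first recall the Mayer--Vietoris-type decomposition $X = X_1 \cup X_2 \cup X_3$ used in Sections~\ref{sechomologyX}--\ref{sechomologyXtwist}: write each piece and each pairwise intersection (a $3$--handlebody $H_\nu$) and the core surface $\Sigma$ in terms of the diagram, and assemble a double (or triple) mapping cone. Since $H^\varphi_*(\natural^k(S^1\times B^3);\F)$ and $H^\varphi_*(\natural^g(S^1\times D^2);\F)$ vanish in positive degrees over $\F$ (the twisting being non-trivial on each $S^1$ factor, or handled by a deformation-retraction argument as in the earlier sections), the spectral sequence or iterated cone collapses, and one is left with a complex built only from $H^\varphi_1(\Sigma;\F)$ and the subspaces $L^\pf_\nu = \ker(H^\varphi_1(\Sigma;\F)\to H^\varphi_1(H_\nu;\F))$, exactly the data appearing in $C^\pf$.

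Next I would fix the $\F$-basis $c$ of $C^\pf$. The point is that over the field $\F$ every space in $C^\pf$ (including the intersections $L^\pf_\nu\cap L^\pf_{\nu'}$) is a finite-dimensional $\F$-vector space, so bases exist; the content of the theorem is that one such choice makes the torsion comparison hold \emph{with no indeterminacy beyond $\pm\varphi(H_1(X))$}. I would obtain $c$ from a CW (or handle) structure on $X$ subordinate to the trisection: choose cells for $\Sigma$, then for the handlebodies $H_\nu$ (one $2$--handle per curve $\nu_i$), then for the $X_i$, lift everything to the $\varphi$-cover, and use the resulting cellular bases. The multiplicativity of torsion under short exact sequences of based complexes then lets one compare the torsion of the big geometric complex $\widetilde C_*(X)$ with the torsion of its homology and with $\tau(C^\pf;c,h)$; the ambiguity is precisely the units $\pm\varphi(H_1(X))$ coming from the freedom in lifting cells, which is why the identity is stated in $\F/\pm\varphi(H_1(X))$.

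The core of the argument is then the chain of elementary reductions turning $\widetilde C_*(X)$ into $C^\pf$. Each reduction is either (i) deleting an acyclic based subcomplex (cancelling a free handle pair of $X_i$ or $H_\nu$), or (ii) an explicit change of basis realizing the identification of a subquotient with $L^\pf_\nu$ or with an intersection of two such; at each step I invoke the standard facts (see Milnor's \emph{Whitehead torsion} survey or Turaev) that deleting an acyclic direct summand multiplies torsion by the torsion of that summand, and that a simple isomorphism of based complexes preserves torsion. Keeping the bases compatible throughout produces the basis $c$ and yields $\tau^\varphi(X;h)=\tau(C^\pf;c,h)$ for the specific $h$ transported along these identifications; since both sides change the same way under a change of homology basis, the equality holds for \emph{any} homology $\F$-basis $h$ of $X$ and $C^\pf$, giving the statement.

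The main obstacle I expect is bookkeeping of signs and units: verifying that the identifications of subquotients of the geometric complex with $L^\pf_\alpha\cap L^\pf_\beta$, etc., are compatible with the differential $\zeta(x,y,z)=(x-z,y-x,z-y)$ \emph{on the nose} (not just up to isomorphism), and that every appeal to multiplicativity of torsion is applied to an honestly short-exact sequence of based complexes with the right orientation/ordering conventions. This is where the cyclic-order remark (a co-orientation of $\Sigma$ orders the $X_i$) matters, and where one must be careful that the residual indeterminacy is exactly $\pm\varphi(H_1(X))$ and not larger. Once the reductions are set up carefully, the torsion identity follows formally from the multiplicativity and the vanishing of the handlebody homologies.
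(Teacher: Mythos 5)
Your overall strategy---build a geometric complex adapted to the trisection, take explicit cellular bases, and use multiplicativity of torsion under short exact sequences---is in the right family of ideas, but the plan has two concrete gaps. First, the vanishing claim is false: a handlebody $\natural^g(S^1\times D^2)$ retracts to a wedge of $g$ circles, whose twisted $\F$-chain complex $0\to\F^g\to\F\to 0$ gives $H_1^\varphi\cong\F^{g-1}$ once $\varphi$ is nontrivial on some loop (and similarly $\F^{k-1}$ for the $4$-dimensional pieces $X_i$). Only $H_\ell^\varphi$ in degrees $\ell\ge 2$ vanishes for these pieces. The degree-one homology of the $H_\nu$ and $X_i$ therefore does not drop out by vanishing; in the paper's argument it is cancelled through the long exact sequences, and the observation that $H_1^\varphi(N)\to H_1^\varphi(X)$ is an isomorphism is what makes the corresponding torsion contribution trivial. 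Your ``the iterated cone collapses'' step fails at exactly this point. Second, you leave unaddressed the genuinely delicate step, namely producing bases of the intersection spaces $L^\pf_\nu\cap L^\pf_{\nu'}$ with controlled torsion. A CW structure subordinate to the trisection readily gives bases of $H_1^\varphi(\Sigma;\F)$ and of the $L^\pf_\nu$, but for the intersections the paper has to put the two curve families simultaneously in standard position relative to a common symplectic basis of $\Sigma$ via handleslides (Lemma~\ref{sympl}); that normalization is what yields $\tau^\varphi(X_j,\partial X_j;h_{\nu\nu'})=1$ in Lemma~\ref{lemmaXjbordXj}, and it is why the theorem asserts the existence of one special basis $c$ rather than a statement about every cellular basis.

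On the route: you propose working with the triple union $X=X_1\cup X_2\cup X_3$ through a spectral sequence or iterated cone, whereas the paper uses the two-piece Mayer--Vietoris decomposition $X=(X\setminus\Int{N})\cup_{\partial N}N$, where $N$ is a regular neighborhood of $Y=H_\alpha\cup H_\beta\cup H_\gamma$ and $X\setminus\Int{N}\simeq X_1\sqcup X_2\sqcup X_3$. This reduces the whole computation to a single application of Lemma~\ref{lem:torsion_as_function} together with the pair sequences for $(Y,\Sigma)$ and $(X_j,\partial X_j)$, which is much lighter bookkeeping. Pursuing the triple union would force you to track torsion through a double complex or iterated cone and would not let you avoid either of the two issues above.
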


The complex basis $c$ is explicited in Subsection \ref{subsecbases}. Although the bases for $H_1^\varphi(\Sigma;\F)$ and the $L^\pf_\nu$ are straightforwardly obtained from the trisection diagram, the computation of the bases for the intersections $L^\pf_\nu\cap L^\pf_{\nu'}$ involves handleslides on the surface. From an algorithmic point of view, this might not be efficient. As an alternative way, one may use the same trick as in Section \ref{sechomol} and compute the torsion of $(\p X, \star)$ instead, where $\p X$ is the complement of $4$-ball and $\star$ is a base point in the boundary. The two torsions $\tau^\varphi(X)$ and $\tau^\varphi(\p X, \star)$ coincide up to a factor, see Proposition \ref{propTorsXpX}. This allows to use much more general complex bases, avoiding the handleslides, see Theorem \ref{thtorsionXstar}. The (light) price to pay is that $\tau^\varphi(\p X, \star)$ is computed only up to a unit in $\Lambda$.

\section{Preliminaries} \label{secprelim}

\subsection{Algebraic torsion}

We recall the algebraic setup, see \cite{Mi66} and \cite{Tu01} for further details and references. Let $\K$ be a field. If $V$ is a finite dimensional $\K$--vector space and $b$ and $c$ are two bases of $V$, we denote by $[b/c]$ 
the determinant of the matrix expressing the basis change from $b$ to $c$. The bases $b$ and $c$ are {\em equivalent} if $[b/c]=1$. Let $C$ be a finite complex of finite dimensional $\K$--vector spaces:
$$
C=( C_m\ \fl{\partial_{m}}\ C_{m-1}\ \longrightarrow  \ \cdots\ \fl{\partial_1}\ C_0 ).
$$
A \emph{complex basis} of $C$ is a family  $c=(c_m,\dots,c_0)$ where $c_i$ is a basis of  $C_i$ for all $i\in\{0,\dots,m\}$. A \emph{homology basis} of $C$ is a family $h=(h_m,\dots,h_0)$ where $h_i$ is a basis of the homology group $H_i(C)$ for all $i\in\{0,\dots,m\}$.
If we have chosen a basis $b_j$ of the space of $j$--dimensional boundaries $B_j(C):= \operatorname{Im} \partial_{j+1}$ for all $j\in \{0,\dots,m-1\}$, then a homology basis $h$ of $C$ induces an equivalence class of bases $(b_ih_i)b_{i-1}$ of $C_i$ for all $i$. 

The \emph{torsion} of the $\K$--complex $C$, equipped with a complex basis $c$ and a homology basis $h$, is the scalar:
\begin{displaymath} 
\tau(C;c,h):=  \prod_{i=0}^m \big[(b_ih_i)b_{i-1}/c_i\big]^{(-1)^{i+1}} \in \K^*.
\end{displaymath}
It is easily checked that this definition does not depend on the choice of $b_0,\dots,b_m$. When $C$ is acyclic, we set $\tau(C;c):= \tau(C;c,\varnothing)$.
\begin{lemma}\label{lem:torsion_as_function}
Consider a short exact sequence $0 \rightarrow C'  \rightarrow C \rightarrow  C''  \rightarrow 0$ of $\K$--complexes with compatible complex bases $c',c,c''$ in the sense that $c_i$ is equivalent to $c'_ic''_i$ for every $i\in \{0,\dots,m\}$ and homology bases $h',h,h''$. 
The associated long exact sequence in homology $\mathcal{H}$ is an acyclic finite $\K$--complex with base $(h',h,h''):= (h'_m,h_m,h''_m,\dots,h'_0,h_0,h''_0)$ and we have
$$ \tau(C;c,h)=  \varepsilon \cdot  \tau(C';c',h') \cdot \tau(C'';c'',h'') \cdot \tau\big(\mathcal{H};  (h',h,h'')\big),$$
where $\varepsilon$ is a sign  depending on the dimensions of $C'_i$, $C_i$, $C''_i$ and $H_i(C')$, $H_i(C)$, $H_i(C'')$ for $i\in \{0,\dots,m\}$.
\end{lemma}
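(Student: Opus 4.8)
The statement is the classical multiplicativity of Reidemeister torsion under a short exact sequence of complexes (see Milnor \cite{Mi66}); I would prove it by an explicit manipulation of bases. First I would reduce to the case where $c_i=c'_ic''_i$ on the nose, which is harmless because torsion is unchanged when a complex basis is replaced by an equivalent one. Likewise, since $\tau(C;c,h)$ does not depend on the auxiliary bases $b_j$ of $B_j(C)=\operatorname{Im}\partial_{j+1}$, I am free to choose all of $b_j$, $b'_j$, $b''_j$ — together with lifts of the homology bases $h$, $h'$, $h''$ to cycles — in any convenient compatible fashion. The aim of these choices is that, for each $i$, the base change matrix $[(b_ih_i)b_{i-1}/c'_ic''_i]$ becomes block triangular, with diagonal blocks the corresponding matrices for $C'$ and $C''$ and one further block recording the connecting homomorphism.

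To set this up I would first record the elementary facts about how cycles, boundaries and homology behave along $0\to C'\to C\to C''\to 0$: the map $C\to C''$ sends $B_i(C)$ onto $B_i(C'')$ and $Z_i(C)$ into $Z_i(C'')$; the map $C'\to C$ embeds $B_i(C')$ and $Z_i(C')$ into $B_i(C)$ and $Z_i(C)$, with $Z_i(C')=Z_i(C)\cap C'_i$; and the three ``defects'' — the quotient of $B_i(C)\cap C'_i$ by $B_i(C')$, the cokernel of $Z_i(C)\to Z_i(C'')$, and the usual homology defect $\ker(H_i(C)\to H_i(C''))/\operatorname{Im}(H_i(C')\to H_i(C))$ — are all canonically images of the connecting map $\delta$. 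Then I would fix bases $b'_i$ of $B_i(C')$ and $b''_i$ of $B_i(C'')$, lift the latter through $C\to C''$, adjoin a chosen basis of the $\delta$-part, and thereby get a basis $b_i$ of $B_i(C)$; similarly I would build bases of the $Z_i(C)$ and lifts of $h_i$ that are compatible with those chosen for $C'$ and $C''$. With these choices each $[(b_ih_i)b_{i-1}/c'_ic''_i]$ splits as promised, by multiplicativity of determinants for block triangular matrices.

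Taking the alternating product $\prod_i[(b_ih_i)b_{i-1}/c'_ic''_i]^{(-1)^{i+1}}$, the $C'$-blocks multiply up to $\tau(C';c',h')$, the $C''$-blocks to $\tau(C'';c'',h'')$, and the blocks coming from $\delta$ and from the maps between the homology spaces assemble — once reindexed according to the ordering $(h',h,h'')$ — precisely into $\tau(\mathcal{H};(h',h,h''))$, since $\mathcal{H}$ is by construction built out of exactly these maps and its boundary spaces are exactly the defect spaces above. The only residual discrepancy is a sign $\varepsilon$: it arises from shuffling the $C'$- and $C''$-pieces past one another (and past the $\mathcal{H}$-pieces) when forming the ordered bases $(b_ih_i)b_{i-1}$, and from the sign conventions built into $\tau(\mathcal{H})$. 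Being a shuffle sign, it depends only on the integers $\dim C'_i$, $\dim C_i$, $\dim C''_i$ and $\dim H_i(C')$, $\dim H_i(C)$, $\dim H_i(C'')$, as claimed.

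The main obstacle is exactly the bookkeeping of the middle step: arranging the bases so that cycles, boundaries and homology of the three complexes sit compatibly even though the corresponding three-term sequences are not exact, and then recognizing the alternating product of ``defect'' determinants as $\tau(\mathcal{H})$ — this is where the signs must be followed with care. A cleaner route I might take instead is to prove the lemma first when $C'$, $C$, $C''$ are all acyclic, where it is almost immediate from the definition, and then reduce the general case to the acyclic one by replacing each complex with an acyclic complex that absorbs its chosen homology basis and still fits into a short exact sequence; but the delicate point, the determination of $\varepsilon$, is the same either way.
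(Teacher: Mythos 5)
The paper does not prove this lemma; it is recalled as a classical fact, with \cite{Mi66} and \cite{Tu01} cited for details at the start of the subsection. Your proposal reproduces, in outline, exactly the argument of Milnor and Turaev: reduce to $c=c'c''$ on the nose, choose compatible lifts of boundaries, cycles and homology bases across the three complexes so that each base-change matrix $\bigl[(b_ih_i)b_{i-1}/c'_ic''_i\bigr]$ is block triangular with the connecting map $\delta$ in the off-diagonal block, then recognize the alternating product of the $\delta$-blocks as $\tau(\mathcal{H})$ and the residual permutation sign as $\varepsilon$. That is the right route, and your description of the sign as a pure shuffle sign depending only on the listed dimensions is correct.

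One small slip in the middle step: of the three ``defects'' you list, the third one, $\ker\bigl(H_i(C)\to H_i(C'')\bigr)/\operatorname{Im}\bigl(H_i(C')\to H_i(C)\bigr)$, is identically zero by exactness of the long exact sequence at $H_i(C)$; it is not one of the spaces where $\delta$ shows up in the block structure. The defects that actually carry $\delta$-determinants are the first two, $\bigl(B_i(C)\cap C'_i\bigr)/B_i(C')\cong\operatorname{Im}\bigl(\delta:H_{i+1}(C'')\to H_i(C')\bigr)$ and $\operatorname{coker}\bigl(Z_i(C)\to Z_i(C'')\bigr)\cong\operatorname{Im}\bigl(\delta:H_i(C'')\to H_{i-1}(C')\bigr)$, together with the failure of $H_i(C')\to H_i(C)$ to be injective and of $H_i(C)\to H_i(C'')$ to be surjective, which are controlled by $\ker\delta$ and $\operatorname{Im}\delta$ in adjacent degrees. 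This does not affect the validity of the overall scheme, but if you carry out the bookkeeping you will want the correct list of defect spaces so that their alternating product really does reassemble into $\tau(\mathcal{H})$.
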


\subsection{Twisted homology and Reidemeister torsion} 

Let $(X,Y)$ be a finite CW--pair with maximal abelian cover $p: \overline{X} \rightarrow X$.
Let $G$ be a finitely generated free abelian group. Fix a group homomorphism $\varphi: \pi_1(X) \rightarrow G$ and 
denote $R$ the group ring $\Lambda=\Z[G]$ or its quotient field $\F=\Q(G)$. The extension of $\varphi$ to a ring morphism $\Z[H_1(X)] \rightarrow R$ is still denoted $\varphi$.
The chain complex of $(X,Y;\varphi)$ with coefficient in $R$ is defined as 
$$C^{\varphi}(X,Y; R)= C(\overline{X},p^{-1}(Y))\otimes_{\Z[H_1(X)] } R.$$ 
We denote $H^\varphi(X,Y; R)$ its homology. 
It is easy to check that
$$ H^\varphi(X,Y; \F) = H^\varphi(X,Y; \Lambda) \otimes_{\Lambda} \F.$$

Let $\overline{c}$ be a complex basis of the complex of free $\Z[H_1(X)]$--module $C(\overline{X},p^{-1}(Y))$ obtained by lifting each relative cell of $(X,Y)$ to $\overline{X}$. Then $c=\overline{c}\otimes 1$ is a complex basis of $C^{\varphi}(X,Y;\F)$. 
\begin{definition}
Given a homology basis $h$ of $H^\varphi(X,Y; \F)$, the torsion of $(X,Y;\varphi)$ is 
$$ \tau^\varphi(X,Y;h) := \tau(C^{\varphi}(X,Y; \F);c,h) \in \F /\pm \varphi(H_1(X)).$$
\end{definition}

The ambiguity in $\pm \varphi(H_1(X))$ is due to the different choices of lift and orientation of the cells. 
Note that the torsion of $(X,Y;\varphi)$ is closely related to the orders of the $\Lambda$--modules $H^\varphi(X,Y;\Lambda)$, see \cite{KL}.

We end the subsection with two useful results.
\begin{lemma} \label{lemmaH0}
If $X$ is connected and $\varphi$ is non-trivial, then $H_0^\varphi(X;\F)=0$.
\end{lemma}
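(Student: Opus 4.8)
The plan is to use the standard description of twisted homology in degree zero via coinvariants and the fact that a nontrivial homomorphism to a free abelian group hits an element that acts without fixed points after inverting.

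First I would recall that $H_0^\varphi(X;\F)$ is the cokernel of the boundary map $\partial_1$ in $C^\varphi(X;\F)$, equivalently the module of coinvariants $\F \otimes_{\Z[H_1(X)]} \F$ where $\Z[H_1(X)]$ acts on the left copy of $\F$ through $\varphi$ and deck transformations. Concretely, since $X$ is connected we can compute with the standard fact that $H_0$ of a connected space with a local system $R$ (with monodromy $\varphi$) is $R / \langle (g-1)r : g \in \varphi(\pi_1 X), r \in R\rangle$; that is, $H_0^\varphi(X;\F) = \F/ I$ where $I$ is the ideal generated by $\{\varphi(\gamma) - 1 : \gamma \in \pi_1(X)\}$, viewing $\F = \Q(G)$ and $\varphi(\gamma) \in G \subset \Lambda \subset \F$.

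Next, the key step: since $\varphi$ is nontrivial, choose $\gamma \in \pi_1(X)$ with $\varphi(\gamma) = g \neq 1$ in $G$. Then $g - 1 \in I$, and because $G$ is a free abelian group and $g \neq 1$, the element $g-1$ is a nonzero element of the integral domain $\Lambda = \Z[G]$, hence a nonzero element of its quotient field $\F = \Q(G)$. A nonzero element of a field is invertible, so $I = \F$, and therefore $H_0^\varphi(X;\F) = \F/\F = 0$. This is really the whole content; the only thing to be careful about is justifying the description of $H_0$ with twisted coefficients, which one can do either by the coinvariants formula above or, at the chain level, by noting that $X$ connected means one can connect any two lifted $0$-cells in $\overline{X}$ by a path, so every difference of basis $0$-cells lies in the image of $\partial_1$, reducing $H_0^\varphi(X;\F)$ to $\F$ modulo the action of deck transformations, i.e. modulo the $\varphi(\gamma)-1$.

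I do not expect a serious obstacle here: the statement is elementary once one has the coinvariants interpretation of $H_0$, and the essential point is simply that $\F$ is a field in which $\varphi(\gamma)-1$ becomes a unit whenever $\varphi(\gamma)\neq 1$. The mildest care needed is to phrase things so that it is clear $\Lambda = \Z[G]$ is an integral domain (true because $G$ is free abelian, hence torsion-free, so $\Z[G]$ embeds in the Laurent polynomial ring over $\Z$) and that $\F$ is genuinely its field of fractions, so that nonzero implies invertible; both facts are already built into the setup of the paper.
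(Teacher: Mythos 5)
Your argument is correct and is the standard one; note that the paper actually leaves this lemma unproved, evidently treating it as well known. Your proposal supplies exactly the expected argument: $H_0^\varphi(X;\F)$ is the module of coinvariants $\F / \bigl(\varphi(\gamma)-1 : \gamma\in\pi_1 X\bigr)$, and a nontrivial $\varphi$ produces some $\varphi(\gamma)-1$ that is a nonzero element of the integral domain $\Lambda=\Z[G]$, hence a unit in the fraction field $\F$, forcing the quotient to vanish. The only cosmetic remark is that the paper defines the twisted chain complex via the maximal abelian cover tensored over $\Z[H_1(X)]$, so one should phrase the coinvariants over $H_1(X)$ rather than $\pi_1(X)$ (equivalent here since $\varphi$ factors through $H_1$), and one can justify the identification $H_0^\varphi(X;\F)\simeq \Z\otimes_{\Z[H_1(X)]}\F$ by right-exactness of the tensor product together with connectedness of the maximal abelian cover.
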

By Blanchfield duality (see Subsection \ref{sectionbl}), Lemma \ref{lemmaH0} implies the following corollary.
\begin{corollary} \label{corHn}
 Assume $X$ is a compact connected oriented $n$--manifold. 
 If $\varphi$ is non-trivial, then $H_n^\varphi(X,\partial X;\F)=0$.
\end{corollary}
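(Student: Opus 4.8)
The plan is to deduce the corollary from Lemma~\ref{lemmaH0} via twisted Poincaré--Lefschetz duality, i.e. the form of Blanchfield duality to be set up in Subsection~\ref{sectionbl}; essentially no other input is needed.

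First I would use Lefschetz duality with the local system of $\F$--modules attached to $\varphi$. Since $X$ is a compact oriented $n$--manifold, capping with the fundamental class $[X,\partial X]\in H_n(X,\partial X;\Z)$ gives an isomorphism of $\F$--vector spaces $H_n^\varphi(X,\partial X;\F)\cong H^0_\varphi(X;\F)$; the orientation local system is trivial, so the classical cap-product argument applies verbatim with twisted $\F$--coefficients (and likewise when $\partial X=\varnothing$).

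Next I would identify $H^0_\varphi(X;\F)$. As $X$ is compact, $C^\varphi(X;\F)$ is a bounded complex of finite-dimensional $\F$--vector spaces, and the cochain complex computing $H^*_\varphi(X;\F)$ is degreewise the $\F$--dual of $C^\varphi(X;\F)$, up to the standard involution $g\mapsto g^{-1}$ of $\F$. Since $\F$ is a field, $\Hom_\F(-,\F)$ is exact, so the universal coefficient theorem gives $H^0_\varphi(X;\F)\cong\overline{\Hom_\F(H_0^\varphi(X;\F),\F)}$, where $\overline{(\cdot)}$ is the conjugate $\F$--vector space. In particular $H^0_\varphi(X;\F)$ vanishes exactly when $H_0^\varphi(X;\F)$ does. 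Finally, since $X$ is connected and $\varphi$ is non-trivial, Lemma~\ref{lemmaH0} gives $H_0^\varphi(X;\F)=0$, and combining this with the two isomorphisms above yields $H_n^\varphi(X,\partial X;\F)=0$.

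I expect the only point requiring care to be the bookkeeping of the local system and of the involution $g\mapsto g^{-1}$ in twisted Poincaré--Lefschetz duality; but this is classical (see e.g.~\cite{KL}), and it is harmless for the present conclusion because the involution is a ring automorphism of $\Lambda$ and $\Hom_\F(-,\F)$ is faithful on finite-dimensional spaces, so none of these operations can turn a non-zero homology space into a zero one or vice versa. In other words, beyond Lemma~\ref{lemmaH0} the argument is purely formal.
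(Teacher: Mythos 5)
Your proof is correct and follows the paper's own route: both deduce the statement from Lemma~\ref{lemmaH0} via twisted Poincar\'e--Lefschetz (Blanchfield) duality. The only difference is cosmetic: the paper directly invokes the nondegeneracy of the intersection pairing $H_0^\varphi(X;\F)\times H_n^\varphi(X,\partial X;\F)\to\F$ stated in Subsection~\ref{sectionbl}, while you unpack that duality into a cap-product isomorphism $H_n^\varphi(X,\partial X;\F)\cong H^0_\varphi(X;\F)$ followed by universal coefficients over the field $\F$.
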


\subsection{Twisted intersection form and Blanchfield duality} \label{sectionbl}

Let $W$ be a compact oriented $n$--manifold and $\varphi:\pi_1(W) \rightarrow G$ be a group homomorphism. 
For $q \in \{0,\dots,n \}$, the \emph{twisted intersection form of $W$} with coefficient in $R$, introduced by Reidemeister in \cite{Re39}, is the sesquilinear map
\begin{equation*} \label{equivform}
 \langle\cdot,\cdot\rangle_W^\varphi: H_q^\varphi(W;R) \times H_{n-q}^\varphi(W,\partial W;R) \longrightarrow R 
\end{equation*}
defined by 
$$ \langle x \otimes z , x' \otimes z' \rangle_W^\varphi= \sum_{h \in \frac{H_1(W)}{\ker(\varphi)}} \langle x, h.x'\rangle_{\overline W} \, \varphi(h)\otimes zz',$$
where $R=\Lambda$ or $\F$, $\overline{W}\twoheadrightarrow W$ is the covering associated with $\ker(\varphi)$ and $\langle\cdot,\cdot\rangle_{\overline W}$ stands for the algebraic intersection in $\overline{W}$. 
By Blanchfield's duality theorem \cite[Theorem 2.6]{Bl57}, for $R=\F$, this form is nondegenerate.

\subsection{Reconstruction of $X$ from the trisection diagram} \label{subsecReconstruction}

A trisection diagram determines the associated 4--manifold. We recall here how to reconstruct the manifold from the diagram and we introduce some notations that we will use in the next sections.

We are given a trisection diagram, {\em i.e.} a closed genus $g$ surface $\Sigma$ and three systems of meridians $(\alpha_i)_{1\leq i\leq g}$, 
$(\beta_i)_{1\leq i\leq g}$, and $(\gamma_i)_{1\leq i\leq g}$. Consider a disk $D^2$ with center $p_0$ and three distinct points $p_\alpha$, $p_\beta$, 
$p_\gamma$ on the boundary, see Figure \ref{figreconstructX}. Take the product $D^2\times\Sigma$ and add a 2--cell along $p_\nu\times\nu_i$ for all $\nu=\alpha,\beta,\gamma$ and all $1\leq i\leq g$. 
It remains to add 3--cells and 4--cells; the way this is performed as no incidence, see Laudenbach and Po\'enaru \cite{LP}. 
\begin{figure}
\begin{center}
\begin{tikzpicture}
 \foreach \t in {1,2,3} {
  \draw[rotate=120*\t] (0.8,1) node {$X_\t$};
  \draw[rotate=120*\t] (0,0) -- (2,0);}
 \draw (0,0) circle (2);
 \draw (2.5,0) node {$H_\gamma$};
 \draw[rotate=120] (2.5,0) node {$H_\alpha$};
 \draw[rotate=240] (2.5,0) node {$H_\beta$};
 \draw (0,0) node {$\scriptstyle{\bullet}$} node[above right] {$\Sigma$};
 \draw[blue] (2,0) node {$\scriptstyle{\bullet}$} node[below right] {$p_\gamma$};
 \draw[rotate=120,blue] (2,0) node {$\scriptstyle{\bullet}$} (1.9,-0.1) node[right] {$p_\alpha$};
 \draw[rotate=240,blue] (2,0) node {$\scriptstyle{\bullet}$} (1.9,0.1) node[right] {$p_\beta$};
 \draw[blue] (0,0) node {$\scriptstyle{\bullet}$} (0,-0.1) node[below right] {$p_{\scriptstyle{0}}$};
\end{tikzpicture}
\end{center} \caption{Decomposition of $X$.} \label{figreconstructX}
\end{figure}
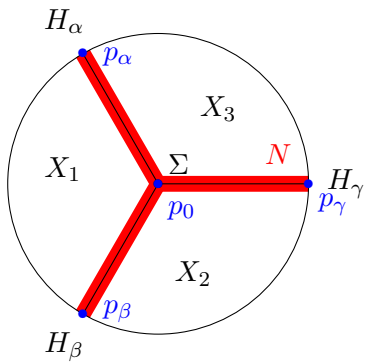
In this decomposition of $X$, $H_\nu$ is recovered as $[p_0,p_\nu]\times\Sigma$ union the corresponding 2--cells and 3--cell. The union $Y=H_\alpha\cup H_\beta\cup H_\gamma$ is the {\em spine} of the trisection. In the computations of homology and torsion, we will make a great use of the exact sequences in homology associated with the pairs $(Y,\Sigma)$ and $(X,Y)$.

\section{Homology of $X$} \label{sechomologyX}

Throughout this section, $H_\ell(.)$ stands for the homology with coefficients in $\Z$. 
We fix a trisection $X=X_1 \cup X_2 \cup X_3$ given by a diagram $(\Sigma;\alpha,\beta,\gamma)$, with spine $Y=H_\alpha \cup H_\beta \cup H_\gamma$ (see Section 3.4). We prove Theorem \ref{thhomologyXnotwist} using the pairs $(Y,\Sigma)$ and $(X,Y)$.

\begin{lemma} \label{lemmaYShom}
 $$H_\ell(Y,\Sigma)\simeq\left\lbrace\begin{array}{c c} L_\alpha\oplus L_\beta\oplus L_\gamma & \textrm{if }\ell=2 \\ \Z^3 & \textrm{if }\ell=3 \\ 0 & \textrm{otherwise} \end{array}\right.$$
\end{lemma}
\begin{proof}
 Note that $H_\ell(Y,\Sigma)\simeq H_\ell(H_\alpha,\Sigma)\oplus H_\ell(H_\beta,\Sigma)\oplus H_\ell(H_\gamma,\Sigma)$ and:
 $$H_\ell(H_\nu,\Sigma)\simeq\left\lbrace\begin{array}{c c} L_\nu & \textrm{if }\ell=2 \\ \Z & \textrm{if }\ell=3 \\ 0 & \textrm{otherwise} \end{array}\right.,$$ 
 thanks to the exact sequence associated with $(H_\nu,\Sigma)$.
\end{proof}

\begin{lemma} \label{lemmaYhom}
The homology of $Y$ with coefficients in $\Z$ is given by
 \begin{equation*}
  H_1(Y)  \simeq \frac{H_1(\Sigma)}{L_\alpha+L_\beta+L_\gamma}; \
  H_2(Y)  \simeq \ker\left(L_\alpha\oplus L_\beta\oplus L_\gamma\to H_1(\Sigma)\right); \
  H_3(Y)  \simeq \Z^2.
 \end{equation*}
\end{lemma}
\begin{proof}
 We use the exact sequence associated with the pair $(Y,\Sigma)$. 
 Since the surface $\Sigma$ bounds any of the $H_\nu$ in $Y$, the map $H_2(\Sigma)\to H_2(Y)$ is trivial and we get:
 $$0 \to H_3(Y) \to H_3(Y,\Sigma) \to H_2(\Sigma) \to 0,$$
 from which we easily see that $H_3(Y)\simeq\Z^2$, generated by two of the boundaries $\partial X_i$, and:
 $$0 \to H_2(Y) \to H_2(Y,\Sigma) \to H_1(\Sigma) \to H_1(Y) \to 0,$$
 which provides the expressions of $H_1(Y)$ and $H_2(Y)$, using Lemma \ref{lemmaYShom}.
\end{proof}

\begin{lemma} \label{lemmaXYhom} 
 $$H_\ell(X,Y)\simeq\left\lbrace\begin{array}{c c} (L_\alpha\cap L_\beta)\oplus(L_\beta\cap L_\gamma)\oplus(L_\gamma\cap L_\alpha) & \textrm{if }\ell=3 \\ \Z^3 & \textrm{if }\ell=4 \\ 0 & \textrm{otherwise} \end{array}\right.$$
\end{lemma}
\begin{proof} 
 Note that $H_\ell(X,Y)\simeq \oplus_{j=1}^3 H_\ell(X_j,\partial X_j)$. One easily checks that $H_4(X_j,\partial X_j)\simeq\Z$ and $H_\ell(X_j,\partial X_j)=0$ if $\ell\neq3,4$. For $\ell=3$, the exact sequence associated with $(X_j,\partial X_j)$ gives $H_3(X_j,\partial X_j)\simeq H_2(\partial X_j)$. Now, for $\nu,\nu' \in \{\alpha,\beta,\gamma \}$ such that $\partial X_j=H_\nu \cup_{\Sigma} H_{\nu'}$, the Mayer--Vietoris sequence gives:
 $$0\to H_2(\partial X_j)\to H_1(\Sigma)\to H_1(H_\nu)\oplus H_1(H_{\nu'}).$$
 Hence $H_2(\partial X_j)\simeq L_\nu\cap L_{\nu'}$. 
\end{proof}

\begin{proof}[Proof of Theorem  \ref{thhomologyXnotwist}]
 We use the exact sequence associated with the pair $(X,Y)$. The map $H_3(Y)\to H_3(X)$ is trivial since $H_3(Y)$ is generated by the classes of the $\partial X_j$ for $j \in \{1,2,3\}$, which bound the $X_j$ in $X$. Thus
 $$0\to H_3(X)\to H_3(X,Y)\fl{\zeta} H_2(Y)\to H_2(X)\to 0$$
 and $H_1(Y)\simeq H_1(X)$, thanks to Lemma \ref{lemmaXYhom}. The expression of the map $\zeta$ follows of the descriptions of $H_2(Y)$ in Lemma \ref{lemmaYhom} and of $H_3(X,Y)$ in 
 Lemma \ref{lemmaXYhom}.
\end{proof}

We end this section with a lemma which will be useful in the next sections.
\begin{lemma}
 If $G$ is a finitely generated abelian group and $\varphi: H_1(X) \rightarrow G$ a non-trivial morphism, then $\varphi$ induces non-trivial morphisms 
 on the first homology groups of the spaces $\Sigma$, $H_\nu$ for $\nu \in \{ \alpha,\beta ,\gamma \}$, $Y$, $X_i$ and $\partial X_i$ for $i=1,2,3$.
\end{lemma}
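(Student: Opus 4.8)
The plan is to exploit the fact that $\varphi$ factors through $H_1(X)$, so it suffices to show that the natural maps into $H_1(X;\Z)$ send the relevant first homology groups \emph{onto} $H_1(X;\Z)$ (or at least onto a subgroup on which $\varphi$ is non-trivial), since a surjection composed with a non-trivial morphism is non-trivial. The key input is the description of $H_1(X;\Z)$ from Theorem \ref{thhomologyXnotwist}, together with the intermediate computations of Lemmas \ref{lemmaNnotwist} and \ref{lemmapartialNnotwist} and the proof of Theorem \ref{thhomologyXnotwist}.

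\textbf{Key steps.}
First I would treat $\Sigma$: by Theorem \ref{thhomologyXnotwist}, $H_1(X;\Z)\simeq H_1(\Sigma)/(L_\alpha+L_\beta+L_\gamma)$, and this isomorphism is induced by the composition $H_1(\Sigma)\to H_1(N)\to H_1(X)$ (the first map surjective by Lemma \ref{lemmaNnotwist}, the second an isomorphism by the proof of Theorem \ref{thhomologyXnotwist}). Hence $H_1(\Sigma;\Z)\to H_1(X;\Z)$ is onto, so $\varphi$ non-trivial on $H_1(X)$ forces $\varphi$ non-trivial on $H_1(\Sigma)$. Next, $\Sigma$ includes into every $H_\nu$, into $Y$, into $N$, into each $X_i$ (via $\Sigma\subset X_i\cap X_j$), and into $X\setminus N$ (a regular neighborhood of the three handlebodies retracts so that $\partial N$, hence $\Sigma$, maps in); in each case the inclusion into $X$ factors through the space in question, so the induced map on $H_1$ to $H_1(X)$ factors through that space's $H_1$, which therefore carries a non-trivial restriction of $\varphi$. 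For $\partial N=\partial X_1\sqcup\partial X_2\sqcup\partial X_3$ and for $\partial X_i$, I would note that $\Sigma\subset\partial X_i$ (it is the Heegaard surface of the splitting $\partial X_i=H_\nu\cup_\Sigma H_{\nu'}$), so again the inclusion $\Sigma\hookrightarrow X$ factors through $\partial X_i$, giving non-triviality of $\varphi$ on $H_1(\partial X_i)$ and hence on $H_1(\partial N)$.

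\textbf{Main obstacle.}
The only delicate point is verifying that the isomorphism $H_1(X;\Z)\simeq H_1(\Sigma)/(L_\alpha+L_\beta+L_\gamma)$ of Theorem \ref{thhomologyXnotwist} is genuinely induced by the inclusion $\Sigma\hookrightarrow X$ (so that a non-trivial $\varphi$ on $H_1(X)$ pulls back to a non-trivial $\varphi$ on $H_1(\Sigma)$): this requires unwinding the Mayer--Vietoris argument in the proof of Theorem \ref{thhomologyXnotwist}, where the isomorphism $H_1(X)\simeq H_1(N)$ comes from the map $g$, and $H_1(N)\simeq H_1(\Sigma)/(L_\alpha+L_\beta+L_\gamma)$ comes from Lemma \ref{lemmaNnotwist} via the retraction $N\to Y$ and the surjection $H_1(\Sigma)\to H_1(Y)$. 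Once $\Sigma$ is handled, all the other spaces follow formally because $\Sigma$ sits inside each of them and the inclusion $\Sigma\hookrightarrow X$ factors accordingly; there is no real computation left, only the bookkeeping of which inclusions factor through which subspaces.
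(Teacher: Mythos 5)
Your proposal is correct and essentially the same as the paper's proof: both hinge on the surjectivity of the inclusion-induced map $H_1(\Sigma)\to H_1(X)$, which follows from the identification $H_1(X)\simeq H_1(\Sigma)/(L_\alpha+L_\beta+L_\gamma)$. The only cosmetic difference is that you route every space through the factorization $\Sigma\hookrightarrow(\text{space})\hookrightarrow X$, whereas the paper instead records the explicit quotients $H_1(H_\nu)\simeq H_1(\Sigma)/L_\nu$, $H_1(X_i)\simeq H_1(\partial X_i)\simeq H_1(\Sigma)/(L_\nu\cap L_{\nu'})$ and the direct-sum decompositions of $H_1(\partial N)$ and $H_1(X\setminus N)$ to exhibit the surjections directly; both arguments are sound and of comparable length.
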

\begin{proof}
 For $Y$, it is obvious since $H_1(Y)\simeq H_1(X)$. Thanks to the expression of $H_1(X)$ given in Theorem \ref{thhomologyXnotwist}, the homology groups $H_1(\Sigma)$, $H_1(H_\nu)\simeq H_1(\Sigma)/L_\nu$ and $H_1(X_1)\simeq H_1(\partial X_1)\simeq H_1(\Sigma)/(L_\alpha\cap L_\beta)$ naturally surject on $H_1(X)$. Compose $\varphi$ by these surjections. 
\end{proof}
For simplicity all these morphisms induced by $\varphi$ are still denoted $\varphi$.

\section{Homology of $(X;\varphi)$} \label{sechomologyXtwist}

In this section, we compute the homology of $(X;\varphi)$ for a non-trivial $\varphi$. As in Section~\ref{sechomologyX}, we use the pairs $(Y,\Sigma)$ and $(X,Y)$. The notation $R$ is used for either $\Lambda$ or $\F$ and $H^\varphi(\cdot)$ stands for the homology with coefficients in $R$. 

\begin{lemma}
 For any $\nu\in\{\alpha,\beta,\gamma\}$, $L_\nu^\pr=\ker\left(H_1^\varphi(\Sigma)\fl{\scriptstyle{incl_*}} H_1^\varphi(H_\nu)\right)$.
\end{lemma}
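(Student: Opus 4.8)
The plan is to identify the subspace $L_\nu^\pr$ with the kernel of the map induced by inclusion $\Sigma \hookrightarrow H_\nu$, and the natural way to do this is to run the long exact sequence of the pair $(H_\nu, \Sigma)$ with twisted coefficients, exactly as in the proof of Lemma~\ref{lemmaNnotwist} but keeping track of the $R$--module structure. First I would recall that $H_\nu$ is a $3$--dimensional handlebody $\natural^g(S^1\times D^2)$, obtained from $\Sigma\times[0,1]$ by attaching $g$ two--handles along the curves $\nu_i\times\{1\}$ and one three--handle. Consequently the relative chain complex $C^\varphi(H_\nu,\Sigma;R)$ is concentrated in degrees $2$ and $3$, with $C_3^\varphi\cong R$ and $C_2^\varphi\cong R^g$, and the relative homology is $H_k^\varphi(H_\nu,\Sigma;R)$ for $k\neq 2,3$ vanishing. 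The key computation is that $H_2^\varphi(H_\nu,\Sigma;R)$ is the image of $R^g$ in $H_1^\varphi(\Sigma;R)$ under the boundary map, which sends the $i$--th generator to the class $[\nu_i]$; that is, $H_2^\varphi(H_\nu,\Sigma;R)\cong L_\nu^\pr$ by definition of $L_\nu^\pr$.

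Next I would write down the relevant segment of the long exact sequence of $(H_\nu,\Sigma)$ with coefficients in $R$:
$$H_2^\varphi(H_\nu;R)\to H_2^\varphi(H_\nu,\Sigma;R)\fl{\partial} H_1^\varphi(\Sigma;R)\fl{incl_*} H_1^\varphi(H_\nu;R).$$
Since $H_\nu$ retracts onto a wedge of $g$ circles, $H_2^\varphi(H_\nu;R)=0$ when $\varphi$ is non-trivial (this uses the lemma just proved, that $\varphi$ restricts to a non-trivial morphism on $H_1(H_\nu)$, so $\overline{H_\nu}$ is an infinite cyclic-type cover of a graph, whose $H_2$ vanishes — more concretely, the twisted chain complex of a graph is concentrated in degrees $0$ and $1$). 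Therefore $\partial$ is injective, and exactness at $H_1^\varphi(\Sigma;R)$ gives that its image, which is $H_2^\varphi(H_\nu,\Sigma;R)\cong L_\nu^\pr$, equals $\ker\big(H_1^\varphi(\Sigma;R)\fl{incl_*} H_1^\varphi(H_\nu;R)\big)$. That is exactly the claimed identity.

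The one point that needs a little care — and is the main obstacle, such as it is — is the identification of $H_2^\varphi(H_\nu,\Sigma;R)$ with $L_\nu^\pr$ as \emph{submodules} of $H_1^\varphi(\Sigma;R)$, not merely as abstract isomorphic modules: one must check that the connecting homomorphism $\partial$ carries the class of the relative $2$--cell attached along $p_\nu\times\nu_i$ (or, in the handle picture, the core of the $i$--th two--handle) precisely to the $R$--homology class $[\nu_i]\in H_1^\varphi(\Sigma;R)$, so that the image of $\partial$ is by definition the $R$--span of the $[\nu_i]$, i.e.\ $L_\nu^\pr$. This is a direct cellular computation: the boundary of the lifted relative $2$--cell is a lift of $\nu_i$, and one must only verify there is no contribution from the $3$--cell, which holds because the attaching map of the $3$--handle is null-homotopic in the appropriate sense (Laudenbach--Po\'enaru, already invoked) so that the top relative chain group maps into the $2$--cycles and does not affect $\partial$ on degree $2$. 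Once this bookkeeping is in place the lemma follows immediately from the exact sequence. I would also remark that the same argument works verbatim for $R=\Lambda$ and $R=\F$, and indeed with $\F$ one additionally knows $H_1^\varphi(H_\nu;\F)=0$ would be false in general, so it is essential to keep the full kernel description rather than hoping $incl_*$ is zero.
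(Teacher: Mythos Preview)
Your argument is correct and follows the same route as the paper: the long exact sequence of the pair $(H_\nu,\Sigma)$, the vanishing $H_2^\varphi(H_\nu;R)=0$ from the retraction onto a wedge of circles, and the identification of the image of the connecting map with $L_\nu^\pr$ via $\partial[D_i^\nu]=[\nu_i]$.

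Two minor points of overreach. First, the vanishing $H_2^\varphi(H_\nu;R)=0$ does not require $\varphi$ to be non-trivial: a wedge of circles has a $1$--dimensional CW structure, so its twisted chain complex is concentrated in degrees $0$ and $1$ for \emph{any} coefficient system (you note this yourself at the end of that sentence, so the appeal to non-triviality is superfluous). Second, Laudenbach--Po\'enaru plays no role here: the connecting homomorphism $H_2^\varphi(H_\nu,\Sigma)\to H_1^\varphi(\Sigma)$ is computed on generators by taking the boundary of each meridian disk, and the $3$--cell only affects which combinations of the $D_i^\nu$ become trivial in $H_2^\varphi(H_\nu,\Sigma)$, not where they land under $\partial$. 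Since you only need that the $D_i^\nu$ \emph{generate} $H_2^\varphi(H_\nu,\Sigma)$ (so that the image of $\partial$ is exactly $L_\nu^\pr$), the attaching map of the $3$--cell is irrelevant. The paper's proof simply omits both digressions.
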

\begin{proof}
 Let $\nu\in\{\alpha,\beta,\gamma\}$. Since $H_\nu$ retracts on a wedge of circles, we have $H_2^\varphi(H_\nu)=0$ and the exact sequence in homology of the pair $(H_\nu,\Sigma)$ provides the short exact sequence:
 $$0\to H_2^\varphi(H_\nu,\Sigma)\to H_1^\varphi(\Sigma)\to H_1^\varphi(H_\nu).$$
 Now, $H_\nu$ is obtained from $\Sigma$ by adding meridian disks $D_i^\nu$ such that $\partial D_i^\nu=\nu_i$ and one 3--cell. Hence $H_2^\varphi(H_\nu,\Sigma)$ is generated by the $D_i^\nu$ for $i=1,\dots,g$ and its image in $H_1^\varphi(\Sigma)$ is exactly the submodule $L_\nu^\pr$ generated by the $\nu_i$.
\end{proof}

\begin{lemma} \label{lemmaYShomtwist}
 We have $H^\varphi_\ell(Y,\Sigma)=0$ for $\ell\neq2$. Moreover, there is a natural identification
 $$H^\varphi_2(Y,\Sigma)\simeq L_\alpha^\pr\oplus L_\beta^\pr\oplus L_\gamma^\pr.$$
\end{lemma}
\begin{proof}
 Use $H^\varphi_\ell(Y,\Sigma)\simeq H^\varphi_\ell(H_\alpha,\Sigma)\oplus H^\varphi_\ell(H_\beta,\Sigma)\oplus H^\varphi_\ell(H_\gamma,\Sigma)$ and the exact sequence associated with $(H_\nu,\Sigma)$.
\end{proof}

\begin{lemma} \label{lemmaYhomtwist}
 There are natural identifications:
 \begin{align*}
  & H_1^\varphi(Y) \simeq \frac{H_1^\varphi(\Sigma)}{L_\alpha^\pr+L_\beta^\pr+L_\gamma^\pr}, \\
  & H_2^\varphi(Y) \simeq \ker\, \big( L_\alpha^\pr\oplus L_\beta^\pr\oplus L_\gamma^\pr\to H_1^\varphi(\Sigma) \big).
 \end{align*}
\end{lemma}
\begin{proof}
 Thanks to Lemma \ref{lemmaYShomtwist}, the exact sequence in homology of the pair $(Y,\Sigma)$ reduces to   
 \begin{equation*} 
  0\to H_2^\varphi({Y})\longrightarrow L_\alpha^\pr\oplus L_\beta^\pr\oplus L_\gamma^\pr \to H_1^\varphi(\Sigma)\longrightarrow H_1^\varphi({Y})
  \longrightarrow 0.
 \end{equation*}
 This provides the given expressions for the homology of $Y$.  
\end{proof}

\begin{lemma} \label{lemmaXYhomtwist}
 We have $H^\varphi_\ell(X,Y)=0$ for $\ell\neq3$. Moreover, there is a natural identification
 $$H^\varphi_3(Y,\Sigma)\simeq(L_\alpha^\pr\cap L_\beta^\pr)\oplus(L_\beta^\pr\cap L_\gamma^\pr)\oplus(L_\gamma^\pr\cap L_\alpha^\pr).$$ 
\end{lemma}
\begin{proof}
 Note that $H^\varphi_\ell(X,Y)\simeq \oplus_{j=1}^3 H^\varphi_\ell(X_j,\partial X_j)$. Let's focuse on $(X_1,\partial X_1)$. For $\ell=3$, the exact sequence associated with $(X_1,\partial X_1)$ gives $H_3(X_1,\partial X_1)\simeq H_2(\partial X_1)$. Now, the Mayer--Vietoris sequence associated with the Heegaard splitting ${\partial {X}}_1=H_\alpha\cup_\Sigma H_\beta$ gives:
 \begin{equation*}
 0\to H_2^\varphi({\partial {X}}_1)\to H_1^\varphi(\Sigma )\fl{f} H_1^\varphi(H_\alpha)\oplus H_1^\varphi(H_\beta).
 \end{equation*}
 We get $H_2^\varphi({\partial {X}}_1)\simeq\ker(f)=L_\alpha^\pr\cap L_\beta^\pr$.
\end{proof}

\begin{proof}[Proof of Theorem \ref{thtwistedhomologyX}]
 The exact sequence associated with the pair $(X,Y)$ gives
 $$0\to H^\varphi_3(X)\to H^\varphi_3(X,Y)\fl{\zeta} H^\varphi_2(Y)\to H^\varphi_2(X)\to 0$$
 and $H^\varphi_1(X)\simeq H^\varphi_1(Y)$. 
 The result then follows from Lemmas~\ref{lemmaYhomtwist} and~\ref{lemmaXYhomtwist}.
\end{proof}

\section{Torsion of $(X;\varphi)$} \label{secTorsion}

In this section, $H^\varphi(\cdot)$ stands for the twisted homology with coefficients in $\F$ and we assume that $\varphi$ is non-trivial. The aim of the section is to prove Theorem \ref{thtorsion}.
We fix a lift of a given $0$--cell $\star$ of $X$ and require that the lift of any $1$--cell starts at the chosen lift of $\star$.

\subsection{Bases in homology and first computations} \label{subsecbases}

In this subsection, we define bases for the complex $C^\pf$ of Theorem \ref{thtorsion} and we compute some related torsions that we need to prove the theorem.

\begin{definition} \label{defsympl}
A {\em (geometric) symplectic basis of $\Sigma$} is a family $(x_i,y_i)_{1\leq i\leq g}$ of simple closed curves in $\Sigma$, based at $\star$, such that
\begin{itemize}
\item any two curves meet only at $\star$,
\item the classes of $x_i$ and $y_i$ form a basis of $H_1(\Sigma;\Z)$,  symplectic with respect to the intersection form $\langle .,. \rangle_\Sigma$,
\item there exists a CW--complex decomposition of $\Sigma$ with a single 2--cell glued along $$\partial\Sigma=\prod_{i=1}^g[x_i,y_i].$$
\end{itemize}
\end{definition}

\begin{lemma} \label{lemmaBaseSigma}
Let $(x_i,y_i)_{1\leq i\leq g}$ be a symplectic basis of $\Sigma$ such that $x_i\in\ker(\varphi)$ for all $i$.  Up to re-ordering, assume that $y_1\notin\ker(\varphi)$. 
For $i=2,\dots,g$, set
$$\ y'_i= y_i-\frac{\varphi(y_i)-1}{\varphi(y_1)-1}\, y_1 \in C_1^\varphi(\Sigma).$$
 Then the family $\hS=(x_i,y'_i)_{i>1}$ is a basis of $H_1^\varphi(\Sigma)$ and $\tau^\varphi(\Sigma;\hS)= 1$.
\end{lemma}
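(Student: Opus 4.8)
The goal is twofold: to show that $\hS=(x_i,y_i')_{i>1}$ is a basis of the twisted homology $H_1^\varphi(\Sigma;\F)$, and to show that the associated torsion is trivial. My plan is to work directly with the CW-structure on $\Sigma$ from Definition~\ref{defsympl}: a single $0$-cell $\star$, the $2g$ one-cells $x_1,\dots,x_g,y_1,\dots,y_g$, and a single $2$-cell $f$ attached along $\prod_{i=1}^g[x_i,y_i]$. Lifting to the maximal abelian cover and tensoring with $\F$, the complex $C^\varphi(\Sigma;\F)$ is
$$ 0 \to \F \xrightarrow{\partial_2} \F^{2g} \xrightarrow{\partial_1} \F \to 0,$$
where $C_1$ has basis $(\p x_1,\dots,\p x_g,\p y_1,\dots,\p y_g)$ (the chosen lifts), $\partial_1(\p\nu)=(\varphi(\nu)-1)\star$ via the Fox-derivative formula, and $\partial_2(f)$ is obtained by Fox calculus from the relator $\prod[x_i,y_i]$. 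Since $x_i\in\ker\varphi$ we have $\partial_1(\p x_i)=0$, and since $y_1\notin\ker\varphi$ the element $\varphi(y_1)-1$ is a nonzero element of $\F$, hence a unit; so $\partial_1$ is surjective and $H_0^\varphi(\Sigma;\F)=0$, consistent with Lemma~\ref{lemmaH0}. The corrected classes $y_i'$ for $i>1$ are precisely chosen so that $\partial_1(\p y_i')=(\varphi(y_i)-1)\star-\frac{\varphi(y_i)-1}{\varphi(y_1)-1}(\varphi(y_1)-1)\star=0$, so $(\p x_i)_{i}\cup(\p y_i')_{i>1}$ is a basis of $\ker\partial_1=Z_1$, of dimension $2g-1$.

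Next I compute $\partial_2(f)$ by Fox calculus. Writing $r=\prod_{i=1}^g[x_i,y_i]$, the Fox derivatives give $\partial r/\partial x_i$ and $\partial r/\partial y_i$, and applying $\varphi$: because every $x_i\in\ker\varphi$, one finds after a short computation that $\varphi(\partial r/\partial x_i)=0$ for all $i$ and $\varphi(\partial r/\partial y_i)=0$ for all $i$ as well (each bracket $[x_i,y_i]$ maps to $1$ under $\varphi$, and the Fox derivative of a commutator $[x,y]$ evaluated after a homomorphism killing $x$ collapses). Hence $\partial_2=0$, so $H_2^\varphi(\Sigma;\F)=\F\cdot f\neq 0$ — wait, this would contradict the expected acyclicity pattern. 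Let me reconsider: in fact $\partial_2(f)\neq 0$ in general; the correct statement is that $\varphi(\partial r/\partial x_i)=\varphi(y_i)-1$ up to sign (not zero), coming from the commutator. The honest route is: $H^\varphi_*(\Sigma;\F)$ is concentrated in degree $1$ with dimension $2g-1-\mathrm{rk}(\partial_2)$; since $\Sigma$ is a closed surface and $\varphi$ nontrivial, $\chi^\varphi=\chi(\Sigma)=2-2g$, and $H_0=0$, $H_2\cong H^0$ which is $0$ by duality, forcing $\partial_2$ injective and $\dim H_1^\varphi(\Sigma;\F)=2g-2$. So $\hS$, which has $2g-2$ elements, is a basis of $H_1^\varphi(\Sigma;\F)$ provided its classes are linearly independent modulo $B_1=\mathrm{Im}\,\partial_2$; since $\partial_2(f)$ has a nonzero $\p y_1$-component (this is the key Fox-calculus computation: the $\p y_1$-coefficient is $\pm\varphi(x_1\cdots)(\ldots)$, a unit times a nonzero scalar), $B_1\cap\mathrm{span}(\hS)=0$, giving independence.

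For the torsion computation I will use the definition $\tau^\varphi(\Sigma;\hS)=\prod_i[(b_ih_i)b_{i-1}/c_i]^{(-1)^{i+1}}$. Choose $b_0=\varnothing$ (since $H_0=0$, $b_0$ is a basis of $B_0=C_0=\F$, namely $\star$, pulled back via $\partial_1$), $b_1=$ a one-element basis of $B_1=\partial_2(\F)$, namely $\partial_2(f)$, and $b_2=\varnothing$. Then: in degree $0$, $(b_0h_0)$ must be compared to $c_0=(\star)$; take the lift $\partial_1(\p y_1)=(\varphi(y_1)-1)\star$, so $b_0=(\varphi(y_1)-1)\star$ and $[b_0/c_0]=\varphi(y_1)-1$. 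In degree $2$, $b_2h_2=\varnothing$ lifts to... actually $H_2=0$ and $b_2=\varnothing$, but we need a lift of $b_1=\partial_2(f)$ into $C_2$, which is just $f=c_2$, contributing $[f/c_2]=1$. In degree $1$, the basis $(b_1h_1)b_0$ of $C_1$ consists of: $\partial_2(f)$, the classes $\hS=(x_i,y_i')_{i>1}$, and the lift $\p y_1$ of $b_0$. I must compute the determinant of the change of basis from $(\partial_2(f),\,x_2,y_2',\dots,x_g,y_g',\,\p y_1)$ to $c_1=(\p x_1,\dots,\p x_g,\p y_1,\dots,\p y_g)$. The hard part, and the main obstacle, is organizing this $2g\times 2g$ determinant: the $y_i'=y_i-\frac{\varphi(y_i)-1}{\varphi(y_1)-1}y_1$ introduce off-diagonal entries in the $\p y_1$ row, but these are exactly cancelled because $\p y_1$ itself is the last vector in the new basis, so column operations (subtracting multiples of the $\p y_1$-column) restore a triangular form; and the $\partial_2(f)$ column, whose $\p y_1$-coefficient is a unit $u\in\F$, replaces the $\p y_1$-position while the old $\p y_1$-vector sits in the last slot — giving a $2\times 2$ block $\begin{pmatrix} u & 1 \\ * & 0\end{pmatrix}$ in the $(\p y_1$-row $\times$ last-two-columns$)$ sub-block, which must be handled carefully. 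After all reductions the determinant collapses to $\pm(\varphi(y_1)-1)^{\pm 1}$ times the Fox-coefficient unit, and the alternating product $\prod[\,\cdot\,]^{(-1)^{i+1}}=[\cdot]_0^{-1}\cdot[\cdot]_1\cdot[\cdot]_2^{-1}$ is arranged so that the factor $\varphi(y_1)-1$ from degree $0$ cancels against its appearance in degree $1$, and the Fox unit is a monomial in $\varphi(H_1(\Sigma))$, hence trivial in $\F/\pm\varphi(H_1(X))$. Thus $\tau^\varphi(\Sigma;\hS)=1$. I expect the bookkeeping of signs and of the $\p y_1$-column/row interaction to be the only real difficulty; everything else is a direct unwinding of the definitions.
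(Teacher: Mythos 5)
Your overall strategy matches the paper's: take the CW-structure on $\Sigma$ coming from the symplectic basis (one $0$-cell, $2g$ one-cells, one $2$-cell attached along $\prod_i[x_i,y_i]$), write the twisted cellular complex, and compute the torsion directly as the determinant of a change of basis. However, the proposal contains a genuine computational error at the central step, and the error propagates into both the independence argument and the determinant bookkeeping.

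The mistake is the claim that $\partial_2(f)$ has a nonzero $\widehat{y}_1$-component, which you then use to conclude $B_1\cap\operatorname{span}(\hS)=0$ and to set up a $2\times2$ block involving $u$. In fact the Fox derivatives give, for the commutator $[x_i,y_i]=x_iy_ix_i^{-1}y_i^{-1}$, $\partial[x_i,y_i]/\partial y_i = x_i - [x_i,y_i]$, and since $\varphi$ factors through $H_1$ and $\varphi(x_i)=1$, every $\varphi(\partial r/\partial y_i)$ vanishes: $\partial_2(\Sigma)$ has \emph{no} $y_i$-components at all. What survives is $\varphi(\partial r/\partial x_i)=1-\varphi(y_i)$ (up to the sign fixed by the chosen lift), giving $\partial_2(\Sigma)=\sum_i(\varphi(y_i)-1)\,x_i$. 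The relevant nonzero coefficient is thus the $x_1$-coefficient $\varphi(y_1)-1$, not a $y_1$-coefficient. The independence of $\hS$ modulo $\operatorname{Im}\partial_2$ then follows because $\hS=(x_i,y_i')_{i>1}$ has zero $x_1$-component while $\partial_2(\Sigma)$ has unit $x_1$-component. (Your Poincaré-duality detour to see $\dim H_1^\varphi=2g-2$ is valid but unnecessary: once you know $\partial_2(\Sigma)$ explicitly, which you need anyway for the torsion, the dimension count is immediate.)

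With the correct $\partial_2(\Sigma)$ the determinant computation is clean and no awkward block appears. Taking $b_0=\star$ with lift $(\varphi(y_1)-1)^{-1}y_1$, the single nontrivial factor in the torsion is the determinant of the matrix expressing $\bigl(\partial\Sigma,\ (x_i,y_i')_{i>1},\ (\varphi(y_1)-1)^{-1}y_1\bigr)$ in the basis $(x_1,\dots,x_g,y_1,\dots,y_g)$. Expanding along the $x_1$-row, only $\partial\Sigma$ contributes the entry $\varphi(y_1)-1$, and the remaining minor is triangular with diagonal entries $1,\dots,1,(\varphi(y_1)-1)^{-1}$; the two factors cancel and the torsion is $1$. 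Your degree-$0$/degree-$1$ cancellation of $\varphi(y_1)-1$ is the right idea, but the matrix picture driving it must be corrected as above.
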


For instance, in Lemma \ref{lemmaBaseSigma}, one can choose the family $(x_i)_i$ to coincide with the family $\alpha$, $\beta$ or $\gamma$ and $(y_i)_i=(x_i^*)_i$ to be a dual family. Note that $H_1^\varphi(\Sigma)=0$ if $g=1$.
\begin{proof}
 There is a CW--decomposition of $\Sigma$ given by $\star$ as 0--cell, the $x_i$ and $y_i$ as 1--cell and $\Sigma$ as 2--cell. The associated $\F$-complex is 
 $C^\varphi(\Sigma): \ 0\to C_2^\varphi(\Sigma)\to C_1^\varphi(\Sigma)\to C_0^\varphi(\Sigma)\to 0$
 with basis $\Sigma$, $(x_i,y_i)_i$ and $\star$. We choose the lift of $\Sigma$ so that  
 $$\partial \Sigma=\sum_{1\leq i \leq g}(\varphi(y_i)-1)\, x_i\ \in C_1^\varphi(\Sigma).$$
 For all $i$, $\partial x_i=0$ and $\partial y_i=(\varphi(y_i)-1)\,\star$. Hence $\hS$ is a basis of $H_1^\varphi(\Sigma)$. We get
 $$\tau^\varphi(\Sigma;\hS)=\left[\frac{\partial \Sigma\cdot \hS\cdot (\varphi(y_1)-1)^{-1} y_1}{(x_i,y_i)_{1\leq i\leq g}}\right] =1.$$
\end{proof}

\begin{lemma} \label{lemmaBaseLag}
Let $\nu\in\{\alpha,\beta,\gamma\}$ and  $(\nu_i^*)_{1\leq i\leq g}$ be simple closed curves in $\Sigma$ such that $(\nu_i,\nu_i^*)_{1\leq i\leq g}$ is a symplectic basis for $\Sigma$. Permuting the indices if necessary, assume that $\varphi(\nu_1^*)\neq1$. 
 Then the family $h_\nu=(\frac{1}{\varphi(\nu_1^*)-1}\, \nu_2, \nu_3,\dots,\nu_g)$ is a basis of $L_\nu^\pf$.
\end{lemma}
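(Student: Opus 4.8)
The plan is to imitate the computation in Lemma~\ref{lemmaBaseSigma}, now working with the subcomplex of $C^\varphi(\Sigma;\F)$ spanned by the curves $\nu_i$ and the 2--cells they bound in $H_\nu$. Concretely, first I would recall from the preliminary lemma that $L_\nu^\pf=\ker\bigl(H_1^\varphi(\Sigma)\to H_1^\varphi(H_\nu)\bigr)$ and that, via the boundary map of the pair $(H_\nu,\Sigma)$, one has $H_2^\varphi(H_\nu,\Sigma)\simeq L_\nu^\pf$, this module being generated by the meridian disks $D_i^\nu$ with $\partial D_i^\nu=\nu_i$. So $L_\nu^\pf$ is the $\F$--span of the classes of the curves $\nu_1,\dots,\nu_g$ in $H_1^\varphi(\Sigma)$.

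Next I would compute the relations among the $\nu_i$ in $H_1^\varphi(\Sigma)$. Using the CW--decomposition of $\Sigma$ with single 2--cell and 1--cells $(\nu_i,\nu_i^*)_i$, the 2--cell contributes $\partial\Sigma=\sum_i(\varphi(\nu_i^*)-1)\,\nu_i$ (choosing the lift as in Lemma~\ref{lemmaBaseSigma}, after permuting so that $\varphi(\nu_1^*)\neq1$, which is possible since $\varphi$ is non-trivial on $H_1(\Sigma)$ and the $\nu_i^*$ together with the $\nu_i$ generate). Since each $\nu_i\in\ker(\varphi)$ (the $\nu_i$ are meridians of $H_\nu$, hence nullhomotopic in $H_\nu$, hence in $\ker\varphi$), we have $\partial\nu_i=0$, so every $\nu_i$ is a cycle, and the only relation among their classes in $H_1^\varphi(\Sigma)$ coming from the 2--cell is $\sum_i(\varphi(\nu_i^*)-1)\,\nu_i=0$. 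As the classes $\nu_1,\dots,\nu_g$ are part of a $\Z$--basis of $H_1(\Sigma;\Z)$, no further relations occur over $\F$; hence $L_\nu^\pf$ has $\F$--dimension $g-1$, and $\nu_1$ can be eliminated via $\varphi(\nu_1^*)\,\nu_1-\nu_1=-\sum_{i\geq2}(\varphi(\nu_i^*)-1)\nu_i$, i.e.\ $\nu_1=-\frac{1}{\varphi(\nu_1^*)-1}\sum_{i\geq2}(\varphi(\nu_i^*)-1)\nu_i$. Thus $(\nu_2,\nu_3,\dots,\nu_g)$ is an $\F$--basis of $L_\nu^\pf$; rescaling the first vector by $\frac{1}{\varphi(\nu_1^*)-1}$ (an invertible scalar in $\F$) still gives a basis, which is exactly $h_\nu$.

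The only subtlety—the step I expect to need a sentence of care—is making sure that over $\F$ there are genuinely no relations beyond the single one from $\partial\Sigma$: this is because $C_2^\varphi(\Sigma;\F)$ is one-dimensional (a single 2--cell), so $B_1^\varphi(\Sigma)$ is at most one-dimensional and is spanned by $\partial\Sigma$, which is nonzero precisely because $\varphi(\nu_1^*)\neq1$. Combined with the fact that $\nu_1,\dots,\nu_g$ are linearly independent in $H_1(\Sigma;\Z)\otimes\F$ before passing to the quotient, this forces $\dim_\F L_\nu^\pf=g-1$ with the stated basis. I would then simply note that the specific choice of scaling factor in $h_\nu$ is made for later bookkeeping in the torsion computation and record the lemma as proved.
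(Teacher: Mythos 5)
Your proposal is correct and matches the paper's approach: the paper's proof likewise invokes the CW--complex from Lemma~\ref{lemmaBaseSigma} with $x_i=\nu_i$, $y_i=\nu_i^*$ and observes that the single relation $\sum_i(\varphi(\nu_i^*)-1)\,\nu_i=0$ lets one solve for $\nu_1$. You have simply spelled out the details the paper leaves implicit (why $\nu_i\in\ker\varphi$, why there is exactly one relation, and why the rescaling by $\frac{1}{\varphi(\nu_1^*)-1}$ preserves the basis property), so this is the same argument in expanded form.
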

\begin{proof}
 By definition, $L_\nu^\pf$ is generated by the $\nu_i$. Consider the same complex as in the proof of Lemma \ref{lemmaBaseSigma}, with $x_i=\nu_i$ and $y_i=\nu_i^*$. The only relation is \mbox{$\sum_{1\leq i \leq g}(\varphi(\nu_i^*)-1)\, \nu_i=0$.}
\end{proof}

\begin{lemma} \label{lemmaBaseYSigma}
 Via the identification $H_2^\varphi(Y,\Sigma)\simeq L_\alpha^\pf\oplus L_\beta^\pf\oplus L_\gamma^\pf$, the family $\hYS=\ha.\hb.\hc$ is a homology basis of $(Y,\Sigma;\varphi)$ and we have $\tau^\varphi(Y,\Sigma;\hYS)=1$.
\end{lemma}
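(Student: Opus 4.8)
The plan is to compute the relative homology $H^\varphi_*(Y,\Sigma)$ via the CW-structure, exhibit an explicit complex basis for the chain complex $C^\varphi(Y,\Sigma;\F)$, and then read off both the homology basis $\hYS$ and the torsion directly. First I would recall, as established in Lemma~\ref{lemmahomologyN}, that $C^\varphi(Y,\Sigma)\simeq C^\varphi(H_\alpha,\Sigma)\oplus C^\varphi(H_\beta,\Sigma)\oplus C^\varphi(H_\gamma,\Sigma)$, that each $H^\varphi_\ell(H_\nu,\Sigma)$ vanishes for $\ell\neq 2$, and that $H^\varphi_2(H_\nu,\Sigma)\simeq L_\nu^\pf$ via the boundary map sending the meridian disk $D^\nu_i$ to $\nu_i$. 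By multiplicativity of torsion over direct sums, it suffices to prove the statement for a single piece $(H_\nu,\Sigma)$: namely, to find a complex basis of $C^\varphi(H_\nu,\Sigma;\F)$ such that the torsion of $(H_\nu,\Sigma;\varphi)$ with respect to it and the homology basis $h_\nu$ of Lemma~\ref{lemmaBaseLag} equals $1$; the full statement then follows with $\hYS=\ha.\hb.\hc$ and $\tau^\varphi(Y,\Sigma;\hYS)=\prod_\nu\tau^\varphi(H_\nu,\Sigma;h_\nu)=1$ (taking care that the sign $\varepsilon$ in Lemma~\ref{lem:torsion_as_function} is trivial here because the relative chain groups are concentrated in a single degree, so there are no sign contributions).

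Next I would make the CW-structure explicit. Using the symplectic basis $(\nu_i,\nu_i^*)_i$ of $\Sigma$ from Lemma~\ref{lemmaBaseLag}, the pair $(H_\nu,\Sigma)$ has relative cells: the $g$ meridian disks $D^\nu_i$ in degree $2$ (with $\partial D^\nu_i=\nu_i$ in $C_1(\Sigma)$, hence $\partial D^\nu_i=0$ in the relative complex) and one $3$--cell $E$ in degree $3$. The relative complex $C^\varphi(H_\nu,\Sigma;\F)$ is therefore $0\to \F\cdot E\to \F^g\to 0$, concentrated in degrees $3$ and $2$, where the boundary $\partial_3\colon \F\cdot E\to\bigoplus_i\F\cdot D^\nu_i$ is given (after choosing the lift of $E$ appropriately, exactly as in the proof of Lemma~\ref{lemmaBaseSigma}) by $\partial E=\sum_{1\leq i\leq g}(\varphi(\nu_i^*)-1)\,D^\nu_i$. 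Since $\varphi(\nu_1^*)\neq 1$, this map is injective with image a free rank-one submodule, confirming $H^\varphi_3(H_\nu,\Sigma)=0$ and $H^\varphi_2(H_\nu,\Sigma)=\bigoplus_i\F D^\nu_i/\langle\sum_i(\varphi(\nu_i^*)-1)D^\nu_i\rangle\simeq L_\nu^\pf$, with $h_\nu=(\tfrac{1}{\varphi(\nu_1^*)-1}D^\nu_2,D^\nu_3,\dots,D^\nu_g)$ a basis.

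Then the torsion is a one-line determinant computation. Choose $b_2=\partial E=\sum_i(\varphi(\nu_i^*)-1)D^\nu_i$ as the basis of the space of $2$--boundaries $B_2=\operatorname{Im}\partial_3$; then in degree $3$ the induced basis is $b_3 h_3 b_2$ with $h_3=\varnothing$ and the lift of $E$ is a preimage of $b_2$, so $[(b_3h_3b_2)/c_3]=[E/E]=1$; in degree $2$ the induced basis of $C_2$ is $(b_2 h_2)$, that is $\big(\sum_i(\varphi(\nu_i^*)-1)D^\nu_i,\ \tfrac{1}{\varphi(\nu_1^*)-1}D^\nu_2,\ D^\nu_3,\dots,D^\nu_g\big)$, expressed in the complex basis $c_2=(D^\nu_1,\dots,D^\nu_g)$. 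The change-of-basis matrix is triangular up to reordering with diagonal entries $(\varphi(\nu_1^*)-1)$ (the $D^\nu_1$-coefficient of $\partial E$) and $1$'s, so its determinant is $\varphi(\nu_1^*)-1$, and hence $[(b_2h_2)/c_2]=\varphi(\nu_1^*)-1$; no other degrees contribute. Collecting the exponents $(-1)^{i+1}$ from the torsion formula gives $\tau^\varphi(H_\nu,\Sigma;h_\nu)=(\varphi(\nu_1^*)-1)^{-1}\cdot(\varphi(\nu_1^*)-1)=1$, and summing over $\nu$ yields $\tau^\varphi(Y,\Sigma;\hYS)=1$.

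The only genuine subtlety, and the step I would be most careful about, is keeping the conventions consistent: matching the exponent signs $(-1)^{i+1}$ in the definition of $\tau(C;c,h)$ against which degree the ``extra'' factor $\tfrac{1}{\varphi(\nu_1^*)-1}$ in $h_\nu$ lives in, and verifying that the sign $\varepsilon$ from the direct-sum (Mayer--Vietoris-style) decomposition in Lemma~\ref{lem:torsion_as_function} is $+1$ — which holds because each summand complex $C^\varphi(H_\nu,\Sigma)$ is concentrated in degrees $2,3$ with the same dimensions, so the dimension count defining $\varepsilon$ gives no sign. Everything else is a direct unwinding of the CW-complex and the determinant formula, exactly parallel to Lemma~\ref{lemmaBaseSigma}.
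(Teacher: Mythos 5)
Your approach is essentially the same as the paper's: decompose $C^\varphi(Y,\Sigma)$ as the direct sum of the three $C^\varphi(H_\nu,\Sigma)$, reduce to computing $\tau^\varphi(H_\nu,\Sigma;h_\nu)$ for a single handlebody using the CW-structure with the meridian disks $D_i^\nu$ and the $3$--cell, and read off the torsion as a change-of-basis determinant, in close parallel to Lemma~\ref{lemmaBaseSigma}. That is exactly what the paper does.

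One small slip in the bookkeeping is worth flagging. You claim $[(b_2h_2)/c_2]=\varphi(\nu_1^*)-1$, asserting the change-of-basis matrix is triangular with diagonal entries $\varphi(\nu_1^*)-1$ and $1$'s. But the first vector of $h_\nu$ is $\frac{1}{\varphi(\nu_1^*)-1}D_2^\nu$, so after reordering the diagonal entries are $\varphi(\nu_1^*)-1$, $\frac{1}{\varphi(\nu_1^*)-1}$, $1,\dots,1$, giving $[(b_2h_2)/c_2]=1$ directly. Your stated intermediate values would then yield $\tau^\varphi(H_\nu,\Sigma;h_\nu)=1^{+1}\cdot(\varphi(\nu_1^*)-1)^{-1}\neq 1$, and the final line $(\varphi(\nu_1^*)-1)^{-1}\cdot(\varphi(\nu_1^*)-1)=1$ pulls the compensating factor out of nowhere. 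The intended computation is clearly the correct one (the two factors of $\varphi(\nu_1^*)-1$ cancel \emph{inside} the determinant $[(b_2h_2)/c_2]$, which equals $1$), so the argument stands once that line is rewritten. Also note that your care about the sign $\varepsilon$, while not wrong, is unnecessary here since $\tau^\varphi$ is only defined up to $\pm\varphi(H_1(X))$.
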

\begin{proof}
 By Lemma \ref{lemmaYShomtwist}, $H_2^\varphi(Y,\Sigma)\simeq L_\alpha^\pf\oplus L_\beta^\pf\oplus L_\gamma^\pf$ is the only non-trivial space in $H^\varphi(Y,\Sigma)$. 
 Moreover, the isomorphism of chain complexes
 $C^\varphi(Y,\Sigma) \simeq C^\varphi(H_\alpha,\Sigma) \oplus C^\varphi(H_\beta,\Sigma) \oplus C^\varphi(H_\gamma,\Sigma)$ provides 
 $$\tau^\varphi(Y,\Sigma;\hYS)=\tau^\varphi(H_\alpha,\Sigma;\ha)\tau^\varphi(H_\beta,\Sigma;\hb)\tau^\varphi(H_\gamma,\Sigma;\hc).$$
 Fix $\nu\in\{\alpha,\beta,\gamma\}$. The handlebody $H_\nu$ is obtained from $\Sigma$ by adding meridian disks $D_i^\nu$ such that $\partial D_i^\nu=\nu_i$  and the $3$-cell $H_\nu$. The associated $\F$-complex of $(H_\nu,\Sigma;\varphi)$ is $C^\varphi(H_\nu,\Sigma)$:
 $$ \ 0\to C_3^\varphi(H_\nu,\Sigma)\to C_2^\varphi(H_\nu,\Sigma)\to 0$$ 
 with bases $H_\nu$ and $(D_i^\nu)_i$. Choose the lifts so that
 $$\partial H_\nu=\sum_{1\leq i \leq g}(\varphi(\nu_i^*)-1)\, D_i^\nu\ \in C_2^\varphi(H_\nu,\Sigma).$$
 Via the identification $H_2^\varphi(H_\nu,\Sigma)\simeq L_\nu^\pf$, the basis $(\frac{1}{\varphi(\nu_1^*)-1}\, D^\nu_2, D^\nu_3,\dots, D^\nu_g)$ coincides with $h_\nu$. Hence
 $\tau^\varphi(H_\nu,\Sigma;h_\nu)=\left[\frac{\partial H_\nu\cdot h_\nu}{( D_i^\nu)_{1\leq i\leq g}}\right]=1$. 
\end{proof}

\begin{lemma} \label{sympl}
Let $j \in \{1,2,3 \}$ and $\nu,\nu' \in\{\alpha,\beta,\gamma\}$ be such that $L_\nu^\pf\cap L_{\nu'}^\pf\simeq\partial X_j$.
There is a symplectic basis $(\xi_i,\xi_i^*)_{1\leq i\leq g}$ of $\Sigma$ such that the family $h_{\nu\nu'}=(\frac{1}{\varphi(\xi_1^*)-1}\xi_2,\xi_3,\dots,\xi_k)$ is a basis of $L_\nu^\pf\cap L_{\nu'}^\pf$.
\end{lemma}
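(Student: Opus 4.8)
The plan is to produce the required symplectic basis of $\Sigma$ by choosing it compatibly with the handlebody $H_\nu$ and then adjusting the dual curves so that they ``see'' the second handlebody $H_{\nu'}$ correctly. Recall that $\partial X_j = H_\nu \cup_\Sigma H_{\nu'}$ and, by Lemma~\ref{lemmapartialN} (and the proof of Lemma~\ref{lemmahomologyN}), $H_2^\varphi(\partial X_j)\simeq L_\nu^\pf\cap L_{\nu'}^\pf = \ker\bigl(H_1^\varphi(\Sigma)\to H_1^\varphi(H_\nu)\oplus H_1^\varphi(H_{\nu'})\bigr)$. So what I need is a symplectic basis $(\xi_i,\xi_i^*)_i$ of $\Sigma$ whose first family $(\xi_i)_i$ consists of curves each of which is null-homotopic (or at least null-homologous with the appropriate twisted coefficients) in \emph{both} $H_\nu$ and $H_{\nu'}$, after the usual re-ordering that isolates a dual curve $\xi_1^*$ with $\varphi(\xi_1^*)\neq 1$. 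Granting such a basis, the argument of Lemma~\ref{lemmaBaseLag} applies verbatim: the twisted chain complex coming from the CW--structure with $\xi_i,\xi_i^*$ as $1$--cells has as its only relation $\sum_i(\varphi(\xi_i^*)-1)\,\xi_i = 0$, so $\{\tfrac{1}{\varphi(\xi_1^*)-1}\xi_2,\xi_3,\dots,\xi_g\}$ is a basis of the span of the $\xi_i$ in $H_1^\varphi(\Sigma)$, which is exactly $L_\nu^\pf\cap L_{\nu'}^\pf$.

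The first step is therefore a purely $3$--dimensional (indeed $2$--dimensional) statement: if $(\Sigma, H_\nu, H_{\nu'})$ is a genus $g$ Heegaard splitting of a $3$--manifold $M=\partial X_j$, then there is a symplectic basis of $\Sigma$ whose $x$--curves $\xi_i$ all bound in $M$, i.e. are trivial in $H_1(M)$, and indeed can be taken to bound disks or to be represented by curves that are simultaneously slidable onto meridians of both handlebodies. This is standard Heegaard-diagram normal form: one performs handleslides on the $\nu$--system and the $\nu'$--system of meridians to bring the Heegaard diagram into a ``reduced'' form in which the geometric intersection pattern is controlled, and then reads off the desired curves. Concretely, I would invoke (or reprove in a sentence) the fact that any Heegaard diagram can be stabilized/destabilized and handleslid so that the curves $\nu_i$ and $\nu'_i$ form a configuration from which one extracts $g$ disjoint simple closed curves $\xi_i$ that are sums of the $\nu_i$'s (hence trivial in $H_\nu$) \emph{and} sums of the $\nu'_i$'s (hence trivial in $H_{\nu'}$). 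The re-ordering hypothesis $\varphi(\xi_1^*)\neq 1$ is harmless: since $\varphi$ is non-trivial and the $\xi_i^*$ together with the $\xi_i$ span $H_1(\Sigma;\Z)$, while each $\xi_i$ lies in $L_\nu^\pf$, non-triviality of $\varphi$ on $H_1^\varphi(\Sigma)$ forces $\varphi$ to be non-trivial on at least one $\xi_i^*$ after possibly permuting; by symplectic symmetry we may also swap $\xi_1$ and $\xi_1^*$ if needed.

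The main obstacle is precisely making the $3$--dimensional normalization rigorous and identifying the span of the resulting $\xi_i$ with $L_\nu^\pf\cap L_{\nu'}^\pf$ at the level of \emph{twisted} coefficients rather than just $\Z$--coefficients. The subtlety is that ``$\xi_i$ is a $\Z$--linear combination of meridians of $H_\nu$'' gives triviality in $H_1(H_\nu;\Z)$, but to conclude $\xi_i\in L_\nu^\pf=\ker(H_1^\varphi(\Sigma)\to H_1^\varphi(H_\nu))$ one must be careful about lifts to the cover. The clean way is to realize each $\xi_i$ up to isotopy as an \emph{embedded} curve obtained by handleslides, so that it literally is a meridian of a handlebody in a modified diagram: a curve bounding an embedded disk in $H_\nu$ is null-homotopic there, hence maps to $0$ in $H_1^\varphi(H_\nu)$ regardless of the choice of lift. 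So I would phrase the normalization as: \emph{there is a symplectic basis $(\xi_i,\xi_i^*)$ of $\Sigma$ such that each $\xi_i$ bounds an embedded disk in $H_\nu$ and an embedded disk in $H_{\nu'}$} — equivalently, the $\xi_i$ form a cut system for both handlebodies of $\partial X_j$. That such a common cut system (of size $\mathrm{rank}(L_\nu\cap L_{\nu'}) = b_2(\partial X_j)$, padded out to a full symplectic basis) exists is a consequence of the Reidemeister--Singer type analysis of Heegaard splittings of $\#^{b}(S^1\times S^2)$; alternatively one can argue algebraically inside $H_1(\Sigma;\Z)$ that $L_\nu\cap L_{\nu'}$ is a direct summand which, being isotropic for the intersection form, extends to a symplectic basis whose first half lies in $L_\nu\cap L_{\nu'}$, and then realize that algebraic basis geometrically by simple closed curves via the change-of-coordinates principle for surfaces. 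Either route finishes the proof; I expect the write-up to favor the algebraic argument (isotropic summand extends to a symplectic basis, then geometrize) since it avoids quoting heavy $3$--manifold machinery, with the only real check being that $L_\nu\cap L_{\nu'}$ is isotropic for $\langle\cdot,\cdot\rangle_\Sigma$ — which holds because $L_\nu$ and $L_{\nu'}$ are each Lagrangian.
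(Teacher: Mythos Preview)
Your proposal contains a genuine gap stemming from a dimension miscount. You claim one can choose a symplectic basis $(\xi_i,\xi_i^*)_{1\leq i\leq g}$ with \emph{every} $\xi_i$ bounding a disk in both $H_\nu$ and $H_{\nu'}$, so that the span of $\xi_1,\dots,\xi_g$ in $H_1^\varphi(\Sigma;\F)$ equals $L_\nu^\pf\cap L_{\nu'}^\pf$, and hence $(\tfrac{1}{\varphi(\xi_1^*)-1}\xi_2,\dots,\xi_g)$ is the sought basis. But $\partial X_j\simeq\sharp^k(S^1\times S^2)$, so $L_\nu\cap L_{\nu'}$ has rank $k$ and $L_\nu^\pf\cap L_{\nu'}^\pf$ has $\F$--dimension $k-1$; your family has $g-1$ elements, which matches the statement of the lemma (which stops at $\xi_k$) only when $k=g$. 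You later acknowledge that a common cut system has only $k$ curves and speak of ``padding out'' to a full symplectic basis, but once you pad, the extra $\xi_{k+1},\dots,\xi_g$ no longer lie in $L_{\nu'}^\pf$, and the identification of the span of all $\xi_i$ with the intersection collapses. Your algebraic alternative (extend the rank-$k$ isotropic subgroup $L_\nu\cap L_{\nu'}$ to a symplectic basis and geometrize) neither ensures $\xi_{k+1},\dots,\xi_g\in\ker\varphi$, which is required to invoke Lemmas~\ref{lemmaBaseSigma} and~\ref{lemmaBaseLag}, nor gives any control on $L_{\nu'}^\pf$ beyond the first $k$ curves, so you cannot conclude that the intersection is no larger than $\langle\xi_2,\dots,\xi_k\rangle$.

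The paper's argument supplies exactly what is missing: by the defining property of a trisection, the pair of curve systems $(\nu,\nu')$ is handleslide-equivalent to the \emph{standard} genus-$g$ diagram of $\sharp^k(S^1\times S^2)$, i.e.\ there is a symplectic basis with $(\xi_1,\dots,\xi_g)$ a meridian system for $H_\nu$ and $(\xi_1,\dots,\xi_k,\xi_{k+1}^*,\dots,\xi_g^*)$ a meridian system for $H_{\nu'}$. Now \emph{both} Lagrangians are expressed in the same symplectic basis, Lemma~\ref{lemmaBaseLag} applied to each gives explicit $\F$--bases of $L_\nu^\pf$ and $L_{\nu'}^\pf$, and their intersection is then visibly $\langle\tfrac{1}{\varphi(\xi_1^*)-1}\xi_2,\xi_3,\dots,\xi_k\rangle$. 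The key idea you were missing is that one must pin down the $\nu'$-meridians explicitly (not merely a subfamily shared with $\nu$) in order to bound $L_\nu^\pf\cap L_{\nu'}^\pf$ from above.
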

\begin{proof}
The surface $\Sigma$ together with the families of curves $\nu$ and $\nu'$ is a Heegaard diagram of $\partial X_j\simeq\sharp^k(S^1\times S^2)$.
Hence there is a symplectic basis $(\xi_i,\xi_i^*)_{1\leq i\leq g}$ such that performing handleslides changes $(\nu_i)_{1\leq i\leq g}$ into $(\xi_i)_{1\leq i\leq g}$ and $(\nu_i')_{1\leq i\leq g}$ into $(\xi_1,\dots,\xi_k,\xi_{k+1}^*,\dots,\xi_g^*)$. Permuting the indices if necessary, we assume $\varphi(\xi_1^*)\neq1$. 
 Now $(\xi_1,\dots,\xi_g)$ is a system of meridians for $H_\nu$ and $(\xi_1,\dots,\xi_k,\xi_{k+1}^*,\dots,\xi_g^*)$ for $H_{\nu'}$. By Lemma \ref{lemmaBaseLag} the families $(\frac{1}{\varphi(\xi_1^*)-1}\xi_2,\xi_3,\dots,\xi_g)$ and $(\frac{1}{\varphi(\xi_1^*)-1}\xi_2,\xi_3,\dots,\xi_k,\xi_{k+1}^*,\dots,\xi_g^*)$ are bases of $L_\nu^\pf$ and $L_{\nu'}^\pf$ respectively.
\end{proof}

\begin{lemma} \label{lemmatorsionXY}
 Via the identification $H_3^\varphi(X,Y)\simeq (L_\alpha^\pf\cap L_\beta^\pf)\oplus (L_\beta^\pf\cap L_\gamma^\pf) \oplus (L_\gamma^\pf\cap L_\alpha^\pf)$, the family $\hXY=\hab.\hbc.\hca$ is a homology basis of the pair $(X,Y;\varphi)$ and we have \mbox{$\tau^\varphi(X,Y;\hXY)=1$.}
\end{lemma}
\begin{proof}
 Fix $j \in \{1,2,3 \}$ and $\nu,\nu' \in\{\alpha,\beta,\gamma\}$ such that $\partial X_j\simeq H_\nu\cup H_{\nu'}$. Let $(\xi_i,\xi_i^*)_{1\leq i\leq g}$ be a symplectic basis of $\Sigma$ given by Lemma \ref{sympl}. For $i=1,\dots,g$, let $D_i$ be a meridian disk of $H_\nu$ with $\partial D_i=\xi_i$. Similarly let $\Delta_i$ be meridians disks of $H_{\nu'}$ such that $\partial \Delta_i=\xi_i$ for $1\leq i\leq k$ and $\partial \Delta_i=\xi_i^*$ for $k<i\leq g$. The handlebody $X_j$ is obtained from $\partial X_j$ by adding 3--cells $B_i$ such that $\partial B_i=D_i-\Delta_i$ for $1\leq i\leq k$ and a 4--cell $X_j$. The associated $\F$-complex is $C^\varphi(X_j,\partial X_j)$:
 $$ \ 0 \rightarrow C_4^\varphi(X_j,\partial X_j) \rightarrow C_3^\varphi(X_j,\partial X_j) \rightarrow 0$$
 with bases $X_j$ and $(B_i)_{1\leq i\leq k}$. Choose the lifts so that 
 $$\partial  X_j=\sum_{1\leq i \leq k}(\varphi(\xi_i^*)-1)\, B_i\ \in C_3^\varphi(X_j,\partial X_j).$$ Hence $H^\varphi_\ell(X_j,\partial X_j)=0$ if $\ell\neq3$. 
 Moreover, by Blanchfield duality, $H_3(X_j,\partial X_j)\simeq H_1^\varphi(X_j)$. Since $X_j$ is obtained from $\partial X_j$ by adding $3$ and $4$-cells, $H_1^\varphi(X_j) \simeq H_1^\varphi(\partial X_j)$. Finally, by Lemma \ref{lemmaXYhomtwist} and Blanchfield duality, $H_1^\varphi(\partial X_j) \simeq H_2^\varphi(\partial X_j) \simeq H_3^\varphi(X_j,\partial X_j) \simeq L_\nu^\pf\cap L_{\nu'}^\pf$.
 A basis of $H_2^\varphi(\partial X_j)$ is obtained by identifying $\xi_i$ with $D_i\cup\Delta_i$. Its Blanchfield dual basis of $H_1^\varphi(\partial X_j)$ is $(\frac{1}{\varphi(\xi_1^*)-1}\xi_2^*,\xi_3^*,\dots,\xi_k^*)$, which is also a basis of $H_1^\varphi(X_j)$. Its Blanchfield dual basis of $H_3(X_j,\partial X_j)$ is $(\frac{1}{\varphi(\xi_1^*)-1} B_2, B_3,\dots, B_k)$ and $$\tau^\varphi(X_j,\partial X_j;h_{\nu\nu'})=\left[\frac{\partial  X_j.h_{\nu\nu'}}{(B_i)_{1\leq i\leq k}}\right]=1.$$ 
 Conclude with $C^\varphi(X,Y)\simeq\oplus_{j=1}^3 C^\varphi(X_j,\partial X_j)$.
\end{proof}

\subsection{Computation of the torsion of $(X;\varphi)$}

In this subsection, we prove Theorem~\ref{thtorsion}. 

Let $\hX$ be a homology basis of $(X;\varphi)$. Let $\hY$ be a homology basis of $Y$, with $\hY^1=\hX^1$ ---recall there is a natural identification $H_1^\varphi(X)\simeq H_1^\varphi(Y)$. Let $\hS$ be a homology basis of $\Sigma$ as provided by Lemma \ref{lemmaBaseSigma}. For the pair $(Y,\Sigma)$, fix the homology basis $\hYS=\ha.\hb.\hc$. For the pair $(X,Y)$, fix the homology basis $\hXY=\hab.\hbc.\hca$. 

\begin{lemma} \label{lemmatorsionY}
 The exact sequence in homology associated with the pair $(Y,\Sigma;\varphi)$ reduces to 
 \begin{equation} \tag{$\HH_Y$} \label{HN}
  0\longrightarrow H_2^\varphi(Y)\longrightarrow H_2^\varphi(Y,\Sigma) \longrightarrow H_1^\varphi(\Sigma)\longrightarrow H_1^\varphi(Y)
  \longrightarrow 0 
 \end{equation}
 and we have $\tau^\varphi(Y; \hY)=\tau(\HH_Y).$
\end{lemma}
\begin{proof}
 The exact sequence of complexes associated with the pair $(Y,\Sigma)$ provides the following equality (see Lemma \ref{lem:torsion_as_function}):
 $$\tau^\varphi(Y; \hY) = \tau^\varphi(\Sigma;\hS) \, \tau^\varphi( Y,\Sigma;\hYS) \, \tau(\HH_Y).$$ 
 The result follows from Lemmas \ref{lemmaBaseSigma} and \ref{lemmaBaseYSigma}.
\end{proof}

\begin{lemma} \label{lemmatorsionHX}
 Let $\HH_{X}$ be the exact sequence in homology associated with the pair $(X,Y)$. We have $\tau(\HH_{X}) = \tau(\HH_X'),$ where $\HH_X'$ is the following part of the sequence $\HH_{X}$ itself:
 \begin{equation} \tag{$\HH_X'$} \label{H2}
  0\to H_3^\varphi(X)\to H_3^\varphi(X,Y)\to H_2^\varphi(Y)\to H_2^\varphi(X)\to 0.
 \end{equation}
\end{lemma}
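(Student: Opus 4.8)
The plan is to exhibit $\HH_X$, regarded as an acyclic based $\F$--complex, as a direct sum of two acyclic subcomplexes — the degree--$2$ block \eqref{H2}$=\HH_2$ and the degree--$1$ block \eqref{H1}$=\HH_1$ — and then to check that $\tau(\HH_1)=\pm1$. First I would collect the vanishing statements that make $\HH_X$ collapse: $H_\ell^\varphi(X\setminus N)=0$ for $\ell\neq 1$ since $X\setminus\Int N$ retracts to a wedge of circles; $H_\ell^\varphi(N)\simeq H_\ell^\varphi(Y)=0$ for $\ell\notin\{1,2\}$ (in particular $H_3^\varphi(N)=0$), from the exact sequence of $(Y,\Sigma)$ using that $H_\ell^\varphi(\Sigma)$ is concentrated in degree $1$ and $H_\ell^\varphi(Y,\Sigma)$ in degree $2$ (Lemma~\ref{lemmaH0}, Corollary~\ref{corHn}, and the proof of Lemma~\ref{lemmahomologyN}); $H_0^\varphi(\partial N)=H_3^\varphi(\partial N)=0$ since each $\partial X_j$ is a closed connected $3$--manifold with non-trivial induced morphism, while $H_1^\varphi(\partial N)$ and $H_2^\varphi(\partial N)$ are as in Lemma~\ref{lemmapartialN}; and $H_0^\varphi(X)=H_4^\varphi(X)=0$. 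Feeding this into the Mayer--Vietoris sequence of \eqref{decomp}, $\HH_X$ reduces to the exact sequence
$$0\to H_3^\varphi(X)\to H_2^\varphi(\partial N)\to H_2^\varphi(N)\to H_2^\varphi(X)\to H_1^\varphi(\partial N)\to H_1^\varphi(X\setminus N)\oplus H_1^\varphi(N)\to H_1^\varphi(X)\to 0.$$

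Next I would observe that the connecting map $\delta\colon H_2^\varphi(X)\to H_1^\varphi(\partial N)$ in this sequence vanishes: the injectivity of $H_1^\varphi(\partial N)\to H_1^\varphi(X\setminus N)\oplus H_1^\varphi(N)$ established in the proof of Theorem~\ref{thtwistedhomologyX} (its composite with the first projection is an isomorphism) forces $\operatorname{Im}\delta=0$. Hence $\HH_2$ is a subcomplex of $\HH_X$ with quotient complex $\HH_1$, compatibly with the chosen complex bases, and all three complexes are acyclic; applying Lemma~\ref{lem:torsion_as_function} to the split short exact sequence $0\to\HH_2\to\HH_X\to\HH_1\to 0$, whose homology long exact sequence is trivial, gives $\tau(\HH_X)=\varepsilon\cdot\tau(\HH_2)\cdot\tau(\HH_1)$ for an explicit sign $\varepsilon$.

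Finally I would show $\tau(\HH_1)=\pm1$. The relevant bases are $\hbN^1$ on $H_1^\varphi(\partial N)$, $\hXN$ on $H_1^\varphi(X\setminus N)$, and $\hX^1$ on $H_1^\varphi(N)\simeq H_1^\varphi(X)$; by construction (Lemma~\ref{lemmatorsionpartialN}), $\hXN$ is the image of $\hbN^1$ under the inclusion-induced isomorphism $H_1^\varphi(\partial N)\simeq H_1^\varphi(X\setminus N)$, i.e.\ under the composite of the first map $f$ of \eqref{H1} with the first projection, while the second map of \eqref{H1} restricts to the isomorphism $H_1^\varphi(N)\simeq H_1^\varphi(X)$ used to transport $\hX^1$. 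Choosing the basis $f(\hbN^1)$ for the $1$--boundaries of $\HH_1$, the induced basis of $H_1^\varphi(X\setminus N)\oplus H_1^\varphi(N)$ is $f(\hbN^1)$ completed by a lift $(0,\hX^1)$ of $\hX^1$, and its transition matrix to $\hXN\cup\hX^1$ is block triangular with identity diagonal blocks; hence every factor of $\tau(\HH_1)=\prod_i[(b_ih_i)b_{i-1}/c_i]^{(-1)^{i+1}}$ is $\pm1$, so $\tau(\HH_1)=\pm1$ and $\tau(\HH_X)=\pm\tau(\HH_2)$. The hard part will be precisely this bookkeeping — pinning the chosen homology bases on $\partial N$, $X\setminus N$, $N$ and $X$ down tightly enough to see that \eqref{H1} has trivial torsion, and then checking that the sign $\varepsilon$ produced by Lemma~\ref{lem:torsion_as_function} together with this $\pm1$ combine to the stated clean equality $\tau(\HH_X)=\tau(\HH_2)$; everything else is routine assembly of vanishing results already available in the text.
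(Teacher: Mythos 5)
Your argument follows the paper's proof closely: both split the acyclic Mayer--Vietoris sequence $\HH_X$ at the vanishing connecting map $H_2^\varphi(X)\to H_1^\varphi(\partial N)$ into the two acyclic blocks $\HH_2$ and $\HH_1$, and show $\tau(\HH_1)=1$ from the fact that the map $H_1^\varphi(X\setminus N)\oplus H_1^\varphi(N)\to H_1^\varphi(X)$ restricts to an isomorphism $H_1^\varphi(N)\simeq H_1^\varphi(X)$, which (as your block-triangular computation makes explicit) forces the relevant determinant to be $1$. The sign bookkeeping you flag as "the hard part" is not actually needed: the signs $\varepsilon$ and $\pm1$ are immaterial because $\tau^\varphi(X;h)$ is only defined modulo $\pm\varphi(H_1(X))$, and the paper already drops such signs throughout (see the use of Lemma~\ref{lem:torsion_as_function} in the proof of Theorem~\ref{thtorsion}).
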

\begin{proof}
 The sequence $\HH_X$ is composed of $\HH_X'$ and $0\to H^\varphi_1(Y)\to H^\varphi_1(X)\to0$. 
\end{proof}

\begin{proof}[Proof of Theorem \ref{thtorsion}]
Here, $\hX$ is the homology basis $h$ of the statement. The exact sequence with coefficients in $\F$ associated with the pair $(X,Y)$ induces the following equality:
$$\tau^\varphi(X;\hX)=  \tau^\varphi(Y;\hY) \,\tau^\varphi(X,Y;\hXY) \,\tau(\HH_{X}).$$
Hence Lemmas~\ref{lemmatorsionXY} and~\ref{lemmatorsionY} give:
$$\tau^\varphi(X;\hX)=\tau(\HH_Y)\,\tau(\HH_X').$$
By Lemmas \ref{lemmaYShomtwist} and~\ref{lemmatorsionY}, the sequence \ref{HN} writes:
$$0\to H_2^\varphi(Y) \to L_\alpha^\pf\oplus L_\beta^\pf\oplus L_\gamma^\pf \to H_1^\varphi(\Sigma) \to H_1^\varphi(Y)\to 0.$$
Fixing a basis $s$ for $L_\alpha^\pf+L_\beta^\pf+L_\gamma^\pf\subset H_1^\varphi(\Sigma)$, we get
$$\tau(\HH_Y)=\left[\frac{\hY^2.s}{\ha.\hb.\hc}\right]^{-1}\left[\frac{s.\hX^1}{\hS}\right].$$
By Lemmas \ref{lemmaXYhomtwist} and~\ref{lemmatorsionHX}, the sequence \ref{H2} writes:
$$0\to H_3^\varphi(X) \to (L_\alpha^\pf\cap L_\beta^\pf)\oplus
(L_\beta^\pf\cap L_\gamma^\pf)\oplus(L_\gamma^\pf\cap L_\alpha^\pf) \fl{\zeta} H_2^\varphi(Y)\to H_2^\varphi(X) \to 0.$$
Fixing a basis $t$ for $\mathrm{Im}(\zeta)\subset H_2^\varphi(Y)$, we get
$$\tau(\HH_X')=\left[\frac{\hX^3.t}{\hab.\hbc.\hca}\right]\left[\frac{t.\hX^2}{\hY^2}\right]^{-1}.$$
Fixing the complex basis $c=(\hab.\hbc.\hca,\ha.\hb.\hc,\hS)$ for the complex $C^\pf$, we have:
$$\tau(C^\pf;c,\hX)=\left[\frac{\hX^3.t}{\hab.\hbc.\hca}\right]\left[\frac{t.\hX^2.s}{\ha.\hb.\hc}\right]^{-1}\left[\frac{s.\hX^1}{\hS}\right],$$
so that we get the desired equality.
\end{proof}

\section{Computation of $\tau^\varphi(X)$ via $(\p X,\star)$} \label{secTorsionXstar}

In this section, we compute the torsion of $(\p X,\star;\varphi)$ and relate it to the torsion of $(X;\varphi)$. For $\nu\in\{\alpha,\beta,\gamma\}$, let $\p L_\nu^\pf$ be the subspace of $H_1^\varphi(\p\Sigma,\star;\F)$ generated by the homology classes of the curves $\nu_i$. 

\begin{lemma} \label{lemmaTorsXpX}
 The $\Lambda$--modules $H_1^\varphi(\p\Sigma,\star;\Lambda)$, $\p L^\pl_\nu$ and $\p L^\pl_\nu\cap \p L^\pl_{\nu'}$ are free $\Lambda$--modules of rank $2g$, $g$ and $k$ respectively. 
 Moreover, $H_1^\varphi(\p\Sigma,\star;\F)=H_1^\varphi(\p\Sigma,\star;\Lambda)\otimes_\Lambda\F$, $\p L^\pf_\nu=\p L^\pl_\nu\otimes_\Lambda\F$ and $\p L^\pf_\nu\cap \p L^\pf_{\nu'}=(\p L^\pl_\nu\cap \p L^\pl_{\nu'})\otimes_\Lambda\F$.
\end{lemma}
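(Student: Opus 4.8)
The plan is to realize $H_1^\varphi(\p\Sigma,\star;\Lambda)$ as an explicit free $\Lambda$--module and then to exhibit the submodules $\p L^\pl_\nu$ and $\p L^\pl_\nu\cap\p L^\pl_{\nu'}$ as direct summands spanned by part of a basis. Since $D$ may be taken to be an arbitrarily small disk with $\star\in\partial D$ disjoint from the $3g$ curves, $\p\Sigma$ deformation retracts onto its $1$--skeleton for a CW--decomposition with the single $0$--cell $\star$, the $2g$ edges coming from a symplectic basis $(x_i,y_i)_{1\le i\le g}$ of $\Sigma$ (Definition~\ref{defsympl}) based at $\star$ and disjoint from $\Int D$, and no $2$--cell. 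Lifting these cells to the $\varphi$--cover, the complex $C^\varphi(\p\Sigma,\star;\Lambda)$ is concentrated in degree $1$ and equals $\Lambda^{2g}$, because the relative $0$--chains vanish as $\star$ is the only $0$--cell. Hence $H_1^\varphi(\p\Sigma,\star;\Lambda)\cong\Lambda^{2g}$ is free of rank $2g$, with basis the classes of the lifted edges; the statement $H_1^\varphi(\p\Sigma,\star;\F)=H_1^\varphi(\p\Sigma,\star;\Lambda)\otimes_\Lambda\F$ is the general base-change identity recalled in Section~\ref{secprelim}.

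For $\p L^\pl_\nu$, I would choose the symplectic basis above so that $(x_i)_i$ is isotopic to $(\nu_i)_i$; this is possible because a complete meridian system of a handlebody extends to a geometric symplectic basis of its boundary surface. Then, for a fixed choice of connecting paths from the $\nu_i$ to $\star$ (a different choice only modifies the outcome by $\Lambda$--units, hence does not change the generated submodule), the class of $\nu_i$ in $H_1^\varphi(\p\Sigma,\star;\Lambda)$ is the $i$--th basis vector, so $\p L^\pl_\nu=\bigoplus_{i=1}^g\Lambda\nu_i$ is a free direct summand of rank $g$.

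For the intersections, let $j$, $\nu$, $\nu'$ be as in Lemma~\ref{sympl}, so that $L^\pf_\nu\cap L^\pf_{\nu'}\simeq\partial X_j\simeq\sharp^k(S^1\times S^2)$. Running the proof of Lemma~\ref{sympl} inside $\p\Sigma$ --- all the curves and the handleslides realizing the stabilization can be isotoped off $\Int D$ --- one obtains a symplectic basis $(\xi_i,\xi_i^*)_{1\le i\le g}$ of $\Sigma$, based at $\star$ and disjoint from $\Int D$, carrying $(\nu_i)_i$ by handleslides to $(\xi_1,\dots,\xi_g)$ and $(\nu'_i)_i$ to $(\xi_1,\dots,\xi_k,\xi^*_{k+1},\dots,\xi^*_g)$. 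A handleslide replaces one curve of a system by a band sum with another, which in $H_1^\varphi(\p\Sigma,\star;\Lambda)$ adds to the corresponding class a $\Lambda$--unit multiple of another class of the system; by induction on the number of handleslides, the submodule generated by a system is unchanged. Therefore, in the free module $\Lambda^{2g}$ with basis $(\xi_i,\xi_i^*)_i$ from the first step, $\p L^\pl_\nu=\bigoplus_{i=1}^g\Lambda\xi_i$ and $\p L^\pl_{\nu'}=\bigoplus_{i=1}^k\Lambda\xi_i\oplus\bigoplus_{i=k+1}^g\Lambda\xi^*_i$, whence $\p L^\pl_\nu\cap\p L^\pl_{\nu'}=\bigoplus_{i=1}^k\Lambda\xi_i$ is free of rank $k$.

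Finally, each of $\p L^\pl_\nu$ and $\p L^\pl_\nu\cap\p L^\pl_{\nu'}$ is a direct summand of the free module $H_1^\varphi(\p\Sigma,\star;\Lambda)$, so tensoring the inclusions with the flat ring $\F$ keeps them injective; identifying the images with the $\F$--span of the classes of the curves $\nu_i$ gives $\p L^\pf_\nu=\p L^\pl_\nu\otimes_\Lambda\F$, and writing $A\cap B=\ker\bigl(M\to M/A\oplus M/B\bigr)$ and using exactness of $-\otimes_\Lambda\F$ gives $\p L^\pf_\nu\cap\p L^\pf_{\nu'}=(\p L^\pl_\nu\cap\p L^\pl_{\nu'})\otimes_\Lambda\F$. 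The only genuinely non-formal steps are the two geometric verifications used above: that the symplectic bases adapted to $\nu$ and to the pair $(\nu,\nu')$ (the latter via the Heegaard stabilization of $\partial X_j$) can be realized disjointly from $\Int D$, and that a handleslide alters the relevant twisted homology class only by an invertible operation on the generating set; granting these, everything reduces to the elementary fact that the intersection of two summands of a free module, each spanned by a subset of a common basis, is spanned by the common subset.
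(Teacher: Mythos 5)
Your proof is correct and takes essentially the same route as the paper's: both compute $H_1^\varphi(\p\Sigma,\star;\Lambda)\cong\Lambda^{2g}$ via a CW--decomposition of $\p\Sigma$ adapted to a symplectic basis based at $\star$, realize $\p L^\pl_\nu$ as the free rank--$g$ summand spanned by half the basis, and invoke the adapted symplectic basis from Lemma~\ref{sympl} to identify $\p L^\pl_\nu\cap\p L^\pl_{\nu'}$ with a rank--$k$ summand; the only cosmetic difference is that the paper keeps the $2$--cell of $\p\Sigma$ and adds $\partial D$ as an extra $1$--cell (yielding a $2g+1$--generator, one--relation presentation), whereas you deformation retract the $2$--cell away. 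Your write-up is slightly more explicit than the paper's on the two points it treats tersely --- that handleslides and changes of connecting path alter generators only by invertible $\Lambda$--operations, and that the $\otimes_\Lambda\F$ identities follow from flatness and the description of intersections as kernels --- but the underlying content is identical.
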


\begin{proof}
 Set $R=\Lambda$ or $\F$. 
 Fix $\nu\in\{\alpha,\beta,\gamma\}$. Let $(\nu_i^*)_{1\leq i\leq g}$ be a family of curves on $\p\Sigma$ such that 
 $(\nu_i,\nu_i^*)_{1\leq i\leq g}$ is a symplectic basis for $\Sigma$. Assume the removed ball $B$ is such that $\partial D=\prod_{i=1}^g[\nu_i,\nu_i^*]$. Consider a CW--decomposition of $\p\Sigma$ with one $0$-cell $\star$, one $2$-cell $\p \Sigma$ and $1$-cells $\nu_i$, $\nu_i^*$ and $\partial D$.
 The associated $\F$-complex is $C^\varphi(\p\Sigma,\star)$:
 $$  0\to C_2^\varphi(\p\Sigma,\star)\to C_1^\varphi(\p\Sigma,\star)\to 0.$$ 
 Choose a lift of $\p\Sigma$ such that:
 $$\partial \p \Sigma=-\partial  D+\sum_{1\leq i \leq g}(\varphi(\nu_i^*)-1)\, \nu_i\ \in C_1^\varphi(\p\Sigma,\star).$$
 The only non-trivial homology $R$--module of $(\p\Sigma,\star)$ is $H_1^\varphi(\p\Sigma,\star;R)\simeq R^{2g}$ generated by $\nu_i$ and $\nu_i^*$ for $i=1,\dots,g$. Since $\p L_\nu^\pr$ is the submodule of $H_1 ^\varphi(\p\Sigma,\star;R)$ generated by the $\nu_i$, we get $\p L_\nu^\pr\simeq R^g$. 
 
 A similar computation can be done for $(\p\Sigma,\star)$ from any symplectic basis for $\Sigma$. As in Lemma \ref{sympl}, if $\nu$ and $\nu'$ are distinct, there exists a symplectic basis $(\xi_i,\xi_i^*)_{1\leq i\leq g}$ for $\Sigma$ such that $\p L_\nu^\pr\cap\p L_{\nu'}^\pr\simeq R^k$ freely generated by $(\xi_i)_{1\leq i\leq k}$.
\end{proof}

If $\Gamma$ is a free $\Lambda$--module, a {\em $\Lambda$--basis} of $\Gamma\otimes_\Lambda\F$ is a basis $b\otimes1$ where $b$ is a basis of $\Gamma$.

\begin{theorem} \label{thtorsionXstar}
 The twisted homology of $(\p X,\star)$ canonically identifies with the homology of the following $\F$--complex $\p C^\pf$:
 $$0 \to (\p L_\alpha^\pf \cap \p L_\beta^\pf)\oplus(\p L_\beta^\pf \cap \p L_\gamma^\pf)\oplus(\p L_\gamma^\pf \cap \p L_\alpha^\pf)
 \fl{\zeta} \p L_\alpha^\pf \oplus \p L_\beta^\pf \oplus \p L_\gamma^\pf \fl{\iota} H_1^\varphi(\p\Sigma,\star;\F) \to 0,$$
 where $\zeta(x,y,z)=(x-z,y-x,z-y)$ and $\iota$ is defined by the inclusions $\p L_\nu^\pf\hookrightarrow H_1^\varphi(\p\Sigma,\star;\F)$.
 Moreover, for any complex $\Lambda$--basis $\p c$ for $\p C^\pf$ and any homology basis $\p h$ for $(\p X,\star)$ and $\p C^\pf$, we have: 
 $$\tau^\varphi(\p X,\star;\p h)=\tau(\p C^\pf;\p c,\p h) \quad\textrm{in }\F/\Lambda^*.$$
\end{theorem}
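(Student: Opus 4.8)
The plan is to mirror, essentially verbatim, the argument used for Theorem~\ref{thtorsion}, but working with the manifold-with-boundary triple $(\p X,\star)$ throughout and over $\Lambda$ wherever possible, passing to $\F$ only at the very end. First I would establish the homology identification: one removes a $4$--ball $B$ meeting $\Sigma$ in a disk $D$ disjoint from all $3g$ curves, so the whole reconstruction of Subsection~\ref{subsecReconstruction} goes through with $\Sigma$ replaced by $\p\Sigma$, $H_\nu$ replaced by $\p H_\nu$, etc. The decomposition \eqref{decomp} becomes $\p X=(\p X\setminus\Int N)\cup_{\partial N}N$, where now $N$ is a regular neighborhood of $\p H_\alpha\cup\p H_\beta\cup\p H_\gamma$ and $\p X\setminus\Int N$ is a disjoint union of handlebodies-with-a-ball-removed. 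Running the Mayer--Vietoris sequence with $(\,\cdot\,,\star)$--coefficients and using Lemma~\ref{lemmaTorsXpX} to identify $H_1^\varphi(\p\Sigma,\star;R)$, $\p L_\nu^\pr$ and $\p L_\nu^\pr\cap\p L_{\nu'}^\pr$ with their free models, exactly as in the proof of Theorem~\ref{thtwistedhomologyX}, yields the identification of $H^\varphi(\p X,\star;\F)$ with $H_*(\p C^\pf)$; note $H_3^\varphi$ now vanishes because $\p L_\alpha^\pl\cap\p L_\beta^\pl\cap\p L_\gamma^\pl$ maps isomorphically onto $H_3^\varphi(\p X,\star)=0$, which is why the complex $\p C^\pf$ is one step shorter than $C^\pf$.

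Next I would reprove the three torsion lemmas (\ref{lemmatorsionpartialN}, \ref{lemmatorsionN}, \ref{lemmatorsionHX}) in this setting. The point is that every multiplicative torsion identity coming from a short exact sequence of complexes (Lemma~\ref{lem:torsion_as_function}) holds already over $\Lambda$ up to a unit of $\Lambda$, so I would carry out the computation with $\Lambda$--bases and track equalities in $\F/\Lambda^*$ rather than $\F/\pm\varphi(H_1)$. The analogues of Lemmas~\ref{lemmaBaseSigma}--\ref{lemmaXjbordXj} are cleaner here: for $(\p\Sigma,\star)$ the complex $C^\varphi(\p\Sigma,\star)$ computed in the proof of Lemma~\ref{lemmaTorsXpX} is concentrated in degrees $1,2$ and is free over $\Lambda$, so $\tau^\varphi(\p\Sigma,\star)=1$ with respect to a $\Lambda$--basis; similarly $\tau^\varphi(\p H_\nu,\p\Sigma)$, $\tau^\varphi(\p X_j,\partial X_j)$ and the Blanchfield-duality step for $(\p X_j,\partial\p X_j)$ each give $1$ in $\F/\Lambda^*$, using the very same CW--models with a disk removed. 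Assembling these through the Mayer--Vietoris sequence $\HH_{\p X}$ gives, exactly as in the proof of Theorem~\ref{thtorsion},
$$\tau^\varphi(\p X,\star;\p h)=\frac{\tau(\HH_Y)}{\tau(\HH_2)},$$
and expanding $\tau(\HH_Y)$ and $\tau(\HH_2)$ via chosen bases $s$ for $\p L_\alpha^\pf+\p L_\beta^\pf+\p L_\gamma^\pf$ and $t$ for $\mathrm{Im}(\zeta)$ matches the definition $\tau(\p C^\pf;\p c,\p h)$ up to $\Lambda^*$; here the degree shift (no $\Z\to\dots$ terms on the ends) simply drops the outermost factors present in the closed case.

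The one genuinely new ingredient, and the step I expect to be the main obstacle, is controlling the ambiguity: I must check that the torsions of the pieces $(\p\Sigma,\star)$, $(\p H_\nu,\p\Sigma)$, $(\p X_j,\partial\p X_j)$ are $1$ \emph{in $\F/\Lambda^*$}, i.e. that the relevant homology $\Lambda$--modules are free with the stated $\Lambda$--bases and that all the basis-change determinants involved are units of $\Lambda$ (not merely of $\F$). Freeness over $\Lambda$ comes from Lemma~\ref{lemmaTorsXpX} and from the fact that adding $2$--, $3$--, $4$--cells to build $\p H_\nu$, $\p X_j$, $\p X$ only kills a free summand via a relation of the form $\sum(\varphi(\xi_i^*)-1)\,(\cdot)$ — but $\varphi(\xi_i^*)-1$ is \emph{not} a unit of $\Lambda$, so one must argue, as in Lemmas~\ref{lemmaBaseLag} and~\ref{lemmaXjbordXj}, that dividing a single basis element by $\varphi(\xi_1^*)-1$ nonetheless produces a genuine $\Lambda$--basis of the quotient/submodule in question (this is where the "light price" of working modulo $\Lambda^*$ rather than a finer ambiguity, and the base point $\star$, are essential). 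Once this bookkeeping is in place, the Blanchfield-duality identifications $H_3^\varphi(\p X_j,\partial\p X_j)\simeq H_1^\varphi(\p X_j)$ and the self-inverse pairing in $\tau(\HH)$ go through over $\Lambda$ verbatim, and the theorem follows. The final statement "with the very same proof as for Theorem~\ref{thtorsion}" is therefore literally what I would invoke, after recording these freeness checks.
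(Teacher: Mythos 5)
Your overall plan matches the paper's one-line proof ("using Lemma \ref{lemmaTorsXpX}, the whole sections \ref{sechomologyXtwist} and \ref{secTorsion} adapt to the setting of $(\p X,\star)$"), and you are right that the genuinely new ingredient is the ambiguity control: one must upgrade "$=1$" from $\F^*$ to $\F^*/\Lambda^*$ by working with honest $\Lambda$--bases. But the mechanism you propose for this in your last paragraph is wrong, and it is precisely the point of passing to $(\p X,\star)$.

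You write that one must "argue, as in Lemmas~\ref{lemmaBaseLag} and~\ref{lemmaXjbordXj}, that dividing a single basis element by $\varphi(\xi_1^*)-1$ nonetheless produces a genuine $\Lambda$--basis of the quotient/submodule." This cannot work: $\frac{1}{\varphi(\xi_1^*)-1}\notin\Lambda$, so such a divided family is never a $\Lambda$--basis, and no amount of quotienting modulo $\Lambda^*$ turns a non-unit into a unit. The reason the punctured setting fixes things is different and better: in the proof of Lemma \ref{lemmaTorsXpX}, the only relation in $C^\varphi(\p\Sigma,\star)$ reads
$$\partial\p\Sigma=-\partial D+\sum_{1\leq i\leq g}(\varphi(\nu_i^*)-1)\,\nu_i,$$
in which the extra $1$--cell $\partial D$ carries the \emph{unit} coefficient $-1$. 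Consequently $H_1^\varphi(\p\Sigma,\star;\Lambda)\simeq\Lambda^{2g}$ is free with the undivided $\Lambda$--basis $(\nu_i,\nu_i^*)_i$, and $\p L_\nu^\pl\simeq\Lambda^g$ is free with basis $(\nu_i)_i$; similarly for $\p L^\pl_\nu\cap\p L^\pl_{\nu'}$ via a symplectic basis $(\xi_i,\xi_i^*)$ as in Lemma \ref{sympl}. No division by $\varphi(\xi_1^*)-1$ occurs anywhere in the punctured bases. The basis-change determinants between the cellular $\Lambda$--bases and these homology $\Lambda$--bases are then units of $\Lambda$ (indeed $\pm 1$ up to $\Lambda^*$), which is what makes each piece's torsion $1$ in $\F/\Lambda^*$ and, after assembling through the Mayer--Vietoris sequence exactly as in the proof of Theorem \ref{thtorsion}, gives $\tau^\varphi(\p X,\star;\p h)=\tau(\p C^\pf;\p c,\p h)$ in $\F/\Lambda^*$. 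So your structural outline is right, but replace the "divide and check it's still a $\Lambda$--basis" step by the observation that, after puncturing, there is simply nothing to divide by.
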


\begin{proof}
 Using Lemma \ref{lemmaTorsXpX}, the whole sections \ref{sechomologyXtwist} and \ref{secTorsion} adapt to the setting of $(\p X,\star)$, providing the result. The independance with respect to the choice of a $\Lambda$--bases over $\F$ for $\p C^\pf$ is due to the fact that a change of such bases modifies the torsion by an element of $\Lambda^*$.
\end{proof}

\begin{lemma} \label{lemmaHomXpX}
For all $\ell \neq 1,2,3$, one has $H_\ell^\varphi(X;\F) = H_\ell^\varphi(X,\star)=H_\ell^\varphi(\p X,\star;\F)=0$.
 Moreover, there are natural identifications  of $\F$--vector spaces
 $$H_1^\varphi(X,\star)\simeq H_1^\varphi(\p X,\star), \ H_2^\varphi(X)\simeq H_2^\varphi(X,\star)\simeq H_2^\varphi(\p X,\star) \text{ and }H_3^\varphi(X)\simeq H_3^\varphi(X,\star)$$ and short exact sequences  of $\F$--vector spaces
 $$ 0 \rightarrow H_1^\varphi(X) \rightarrow H_1^\varphi(\p X,\star)\to H_0^\varphi(\star) \rightarrow 0,
  \ 0 \rightarrow H_4^\varphi(B,\partial B) \rightarrow H_3^\varphi(\p X,\star) \rightarrow H_3^\varphi(X) \rightarrow 0.$$
\end{lemma}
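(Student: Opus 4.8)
The plan is to deduce everything from the long exact sequence of the pair $(X,\star)$, the long exact sequence of the triple $(X,\p X,\star)$, and the excision isomorphism $H_\ell^\varphi(X,\p X)\cong H_\ell^\varphi(B,\partial B)$. Two preliminary computations feed them. First, since $X$ is closed, connected, oriented of dimension $4$ and $\varphi$ is non-trivial, Lemma \ref{lemmaH0} gives $H_0^\varphi(X;\F)=0$ and Corollary \ref{corHn} gives $H_4^\varphi(X;\F)=H_4^\varphi(X,\partial X;\F)=0$; together with the trivial vanishing outside degrees $0,\dots,4$ this already establishes $H_\ell^\varphi(X;\F)=0$ for $\ell\notin\{1,2,3\}$. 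Second, $\star$ and $B$ are simply connected, so $\varphi$ restricts trivially there; hence the maximal abelian cover restricts over each of $\star$, $B$, $\partial B$ to a trivial disconnected cover, and the twisted $\F$--homology of such a space is its untwisted $\F$--homology. In particular $H_\ell^\varphi(\star;\F)$ is $\F$ for $\ell=0$ and $0$ otherwise, and $H_\ell^\varphi(B,\partial B;\F)$ is $\F$ for $\ell=4$ and $0$ otherwise.

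Next I would run the pair sequence of $(X,\star)$. For $\ell\ge2$ the terms $H_\ell^\varphi(\star)$ and $H_{\ell-1}^\varphi(\star)$ vanish, giving $H_\ell^\varphi(X,\star)\cong H_\ell^\varphi(X)$; this yields $H_2^\varphi(X)\simeq H_2^\varphi(X,\star)$, $H_3^\varphi(X)\simeq H_3^\varphi(X,\star)$, and $H_\ell^\varphi(X,\star)=0$ for $\ell\ge4$. At the bottom, using $H_0^\varphi(X)=0$, the sequence collapses to the exact sequence $0\to H_1^\varphi(X)\to H_1^\varphi(X,\star)\to H_0^\varphi(\star)\to0$ and to $H_0^\varphi(X,\star)=0$. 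Then I would run the triple sequence of $(X,\p X,\star)$, inserting the excision isomorphism $H_\ell^\varphi(X,\p X)\cong H_\ell^\varphi(B,\partial B)$, so that this relative term is $\F$ for $\ell=4$ and $0$ otherwise. For $\ell=1,2$ the neighbouring relative terms vanish and $H_\ell^\varphi(\p X,\star)\cong H_\ell^\varphi(X,\star)$; combining with the above gives $H_1^\varphi(X,\star)\simeq H_1^\varphi(\p X,\star)$, $H_2^\varphi(X)\simeq H_2^\varphi(\p X,\star)$, and, after transporting along the former, $0\to H_1^\varphi(X)\to H_1^\varphi(\p X,\star)\to H_0^\varphi(\star)\to0$. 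For $\ell=3$, using $H_4^\varphi(X,\star)\cong H_4^\varphi(X)=0$, the triple sequence collapses to $0\to H_4^\varphi(X,\p X)\to H_3^\varphi(\p X,\star)\to H_3^\varphi(X,\star)\to0$, which is the second claimed sequence after $H_4^\varphi(X,\p X)\cong H_4^\varphi(B,\partial B)$ and $H_3^\varphi(X,\star)\cong H_3^\varphi(X)$. For $\ell=0$ and $\ell\ge4$ all relevant terms vanish, so $H_\ell^\varphi(\p X,\star)=0$, completing the vanishing statement.

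The only point requiring care — the main obstacle, such as it is — is justifying excision and the triple sequence with twisted coefficients and pinning down the twisting on $\star$, $B$, $\partial B$. I would handle this by recalling that $H^\varphi_\ell(X,Y;\F)$ is the homology of the $\F$--complex $C(\overline X,p^{-1}(Y))\otimes_{\Z[H_1(X)]}\F$, so excision (excising $\Int{B}$ on $\overline X$ after pushing $\partial B$ in along a collar) and the triple sequence are the classical statements for $\overline X$ followed by an exact base change; and that for simply connected $A\subset X$ the preimage $p^{-1}(A)$ is a disjoint union of copies of $A$ freely permuted by $H_1(X)$, whence $C^\varphi(A;\F)\cong C(A;\Z)\otimes_{\Z}\F$, and similarly for $(B,\partial B)$. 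After that the proof is a routine degree-by-degree diagram chase through the two sequences.
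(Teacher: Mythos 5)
Your proof is correct and follows exactly the approach indicated in the paper: the long exact sequence of the pair $(X,\star)$, the long exact sequence of the triple $(X,\p X,\star)$, and the excision equivalence $(X,\p X)\simeq(B,\partial B)$; the paper leaves the degree-by-degree bookkeeping to the reader, and you have carried it out correctly, including the preliminary inputs $H_0^\varphi(X;\F)=H_4^\varphi(X;\F)=0$ and the identification of the twisted $\F$--homology of the simply connected pieces $\star$ and $(B,\partial B)$ with the untwisted one.
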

\begin{proof}
 The result follows from the exact sequence in homology of the pair $(X,\star)$ and from the exact sequence in homology of the triple $(X,\p X,\star)$ combined with the excision equivalence $(X,\p X)\simeq(B,\partial B)$.
\end{proof}

\begin{proposition} \label{propTorsXpX}
 Let $h$ be a homology basis of $X$. Let $u\in H_1(\p X,\star)$ satisfy $\varphi(u)\neq1$. Then $\p h=(\partial B.h^3,h^2,h^1.u)$ is a homology basis of $(\p X,\star)$ and $$(\varphi(u)-1)\,\tau^\varphi(X;h)=\tau^\varphi(\p X,\star;\p h).$$
\end{proposition}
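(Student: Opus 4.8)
The first assertion follows from Lemma~\ref{lemmaHomXpX} together with the hypothesis $\varphi(u)\neq 1$. Indeed, $h^2$ is a basis of $H_2^\varphi(\p X,\star;\F)$ via the isomorphism $H_2^\varphi(X)\simeq H_2^\varphi(\p X,\star)$; the short exact sequence $0\to H_4^\varphi(B,\partial B)\to H_3^\varphi(\p X,\star)\to H_3^\varphi(X)\to 0$ shows that $\partial B.h^3$ --- the class of $\partial B$, which spans the image of $H_4^\varphi(B,\partial B)$, followed by a lift of $h^3$ --- is a basis of $H_3^\varphi(\p X,\star;\F)$; and in $0\to H_1^\varphi(X)\to H_1^\varphi(\p X,\star)\xrightarrow{\partial} H_0^\varphi(\star)\to 0$ the class of $u$ is sent by $\partial$ to $(\varphi(u)-1)$ times a generator of $H_0^\varphi(\star;\F)\simeq\F$, which is nonzero precisely because $\varphi(u)\neq 1$; hence $h^1.u$ is a basis of $H_1^\varphi(\p X,\star;\F)$ and $\p h$ is a homology basis of $(\p X,\star)$.

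For the torsion identity, the plan is to compare $\tau^\varphi(X)$ with $\tau^\varphi(\p X)$, and then $\tau^\varphi(\p X)$ with $\tau^\varphi(\p X,\star)$, each time by applying the multiplicativity of torsion (Lemma~\ref{lem:torsion_as_function}) to the short exact sequence of $\F$--chain complexes associated with the relevant pair, feeding in the homology computed in Lemma~\ref{lemmaHomXpX}. For the pair $(X,\p X)$, excision gives $C^\varphi(X,\p X;\F)\simeq C^\varphi(B,\partial B;\F)$; since $B$ is simply connected the coefficient system restricts trivially to it, so $H_\ell^\varphi(X,\p X;\F)=0$ for $\ell\neq 4$ and $H_4^\varphi(X,\p X;\F)\simeq\F$ is generated by the relative fundamental class $[B]$, with respect to which the torsion of this complex is $1$ (choosing a CW structure on $X$ with $\p X$ as a subcomplex and a single $4$--cell). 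By the same reasoning the long exact sequence $\HH'$ of $(X,\p X)$ reduces to the isomorphisms $H_\ell^\varphi(\p X)\simeq H_\ell^\varphi(X)$ for $\ell=1,2$ and to $0\to H_4^\varphi(B,\partial B)\to H_3^\varphi(\p X)\to H_3^\varphi(X)\to 0$, whose connecting map sends $[B]$ to $[\partial B]$; taking for $\p X$ the homology basis $h_{\p X}=(\partial B.h^3,h^2,h^1)$, with $h^1,h^2$ transported through these isomorphisms, makes $\tau(\HH')=1$. Lemma~\ref{lem:torsion_as_function} then yields $\tau^\varphi(X;h)=\tau^\varphi(\p X;h_{\p X})$ in $\F/\pm\varphi(H_1(X))$.

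For the pair $(\p X,\star)$, endow $C^\varphi(\star;\F)$ with the homology basis equal to its cell basis, so that $\tau^\varphi(\star)=1$, and $C^\varphi(\p X,\star;\F)$ with the basis $\p h$. By Lemma~\ref{lemmaHomXpX} the long exact sequence $\HH$ of $(\p X,\star)$ reduces to the isomorphisms $H_\ell^\varphi(\p X)\simeq H_\ell^\varphi(\p X,\star)$ for $\ell=2,3$ --- which, with the above bases, contribute $1$ --- and to $0\to H_1^\varphi(\p X)\to H_1^\varphi(\p X,\star)\xrightarrow{\partial} H_0^\varphi(\star)\to 0$, in which the degree-$1$ basis $h^1.u$ is \emph{not} adapted to the sequence, since $\partial$ carries $[u]$ to $(\varphi(u)-1)$ times the generator of $H_0^\varphi(\star)$ rather than to the generator itself. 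A direct evaluation of $\tau(\HH)$ with this basis, using the sign convention of the definition of $\tau$, gives $\tau(\HH)=(\varphi(u)-1)^{-1}$ up to $\pm\varphi(H_1(X))$. Plugging this, together with $\tau^\varphi(\star)=1$, into Lemma~\ref{lem:torsion_as_function} gives $\tau^\varphi(\p X;h_{\p X})=(\varphi(u)-1)^{-1}\,\tau^\varphi(\p X,\star;\p h)$ up to $\pm\varphi(H_1(X))$, and combining with the previous step yields the asserted identity $(\varphi(u)-1)\,\tau^\varphi(X;h)=\tau^\varphi(\p X,\star;\p h)$ in $\F/\pm\varphi(H_1(X))$.

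The only genuinely delicate point is the coordinated choice of compatible complex and homology bases across the two short exact sequences, arranged so that every intermediate torsion except the one carrying $\varphi(u)-1$ reduces to $1$, together with the verification --- by unwinding the sign convention in the definition of $\tau$ --- that $\varphi(u)-1$ contributes to the first power on the side of $\tau^\varphi(\p X,\star;\p h)$ rather than to the inverse power; everything else is routine dévissage via Lemma~\ref{lem:torsion_as_function}.
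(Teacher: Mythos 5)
Your argument is correct and follows essentially the same d\'evissage strategy as the paper: two applications of the multiplicativity of torsion (Lemma~\ref{lem:torsion_as_function}) to short exact sequences of chain complexes, feeding in the homology computed in Lemma~\ref{lemmaHomXpX}, with the factor $\varphi(u)-1$ produced by the connecting map $H_1^\varphi(\p X,\star)\to H_0^\varphi(\star)$. The only (cosmetic) difference is the choice of intermediate object: you pass through $C^\varphi(\p X)$, via the sequences $0\to C^\varphi(\p X)\to C^\varphi(X)\to C^\varphi(X,\p X)\to 0$ and $0\to C^\varphi(\star)\to C^\varphi(\p X)\to C^\varphi(\p X,\star)\to 0$, while the paper passes through $C^\varphi(X,\star)$, via $0\to C^\varphi(\star)\to C^\varphi(X)\to C^\varphi(X,\star)\to 0$ and $0\to C^\varphi(\p X,\star)\to C^\varphi(X,\star)\to C^\varphi(B,\partial B)\to 0$; the two routes traverse the same ``square'' of pairs in opposite orders and yield identical intermediate torsions.
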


\begin{proof}
 Thanks to Lemma \ref{lemmaHomXpX}, $\p h$ is a homology basis for $(\p X,\star)$ and $\hat h=(h^3,h^2,h^1.u)$ is a homology basis for $(X,\star)$. 
 The short exact sequences of complexes 
 $0 \to C^\varphi(\star)\to C^\varphi(X)\to C^\varphi(X,\star)\to 0$ and $0\to C^\varphi(\p X,\star)\to C^\varphi(X,\star)\to C^\varphi(B,\partial B)\to 0$
 provide 
 $$\tau^\varphi(X;h)=\tau^\varphi(\star;\star) \, \tau^\varphi(X,\star;\bar h) \, \tau(\Seq_1)$$
  and  
$$ \tau^\varphi(X,\star;\bar h)=\tau^\varphi(\p X,\star;\p h) \, \tau^\varphi(B,\partial B;B) \, \tau(\Seq_2)$$ 
 where $\Seq_1$ and $\Seq_2$ are the associated exact sequences in homology.
 One easily checks that $\tau^\varphi(\star;\star)=1$ and $\tau^\varphi(B,\partial B;B)=1$, so that:
 $$\tau^\varphi(X;h)=\tau^\varphi(\p X,\star;\p h) \, \tau(\Seq_1) \, \tau(\Seq_2).$$
 A straightforward computation shows that $\tau(\Seq_1)=(\varphi(u)-1)^{-1}$ and $\tau(\Seq_2)=1$.
\end{proof}

\section{Intersection forms} \label{secintform}

In this section, we prove the results on intersection forms. Along the proofs, we give interpretations of the modules of the complexes $C$ and $C^\pf$ as modules of chains. 

We first reprove the expression of the intersection form on $H_2(X;\Z)$ given in \cite{FKSZ18} with our approach. Following Wall \cite{Wa69}, define the symmetric form 
$$\lambda: \frac{L_\alpha \cap (L_\beta + L_\gamma)}{(L_\alpha\cap L_\beta)+ (L_\alpha\cap L_\gamma)} 
 \times \frac{L_\alpha \cap (L_\beta + L_\gamma)}{ (L_\alpha \cap L_\beta)+ (L_\alpha \cap L_\gamma)} \longrightarrow \Z$$
as follows. For $a,a'  \in L_\alpha \cap (L_\beta + L_\gamma)$ and $b\in L_\beta$, $c\in L_\gamma$ such that $a+b+c=0$, 
set $$ \lambda(a,a'):= \langle c, a' \rangle_{\Sigma}.$$ 

\begin{proposition}[\cite{FKSZ18}] \label{propintform} 
Let $\langle \cdot,\cdot \rangle_{X}$ be the intersection form of $X$.
There is an isomorphism
$$ \left(H_2(X;\Z); \langle \cdot,\cdot \rangle_{X} \right) \simeq \left(\frac{L_\alpha \cap 
(L_\beta + L_\gamma)}{ (L_\alpha \cap L_\beta)+ (L_\alpha \cap L_\gamma)}; \lambda \right).$$
\end{proposition}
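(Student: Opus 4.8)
The plan is to identify $H_2(X;\Z)$ with $H_2(C)$ via Theorem~\ref{thhomologyXnotwist}, make the identification geometric by realizing the homology classes as explicit $2$--chains in $X$, and then compute the intersection numbers directly by pushing those chains off $\Sigma$ into the appropriate pieces of the trisection. Concretely, recall from the proof of Theorem~\ref{thhomologyXnotwist} that $H_2(X)$ sits in the exact sequence $0\to H_3(X)\to H_2(\partial N)\xrightarrow{\zeta} H_2(N)\to H_2(X)\to 0$, with $H_2(N)\simeq H_2(Y)\simeq\ker(L_\alpha\oplus L_\beta\oplus L_\gamma\to H_1(\Sigma))$. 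So a class in $H_2(X)$ is represented by a triple $(a,b,c)$ with $a\in L_\alpha$, $b\in L_\beta$, $c\in L_\gamma$ and $a+b+c=0$ in $H_1(\Sigma)$, modulo the image of $\zeta$; restricting attention to the $L_\alpha$--component and quotienting appropriately gives exactly $\frac{L_\alpha\cap(L_\beta+L_\gamma)}{(L_\alpha\cap L_\beta)+(L_\alpha\cap L_\gamma)}$, which furnishes the abstract isomorphism of modules. The first task is therefore to check carefully that this algebraic surjection $H_2(X)\to \frac{L_\alpha\cap(L_\beta+L_\gamma)}{(L_\alpha\cap L_\beta)+(L_\alpha\cap L_\gamma)}$ is a well-defined isomorphism — i.e.\ that the subspace $(L_\alpha\cap L_\beta)+(L_\alpha\cap L_\gamma)$ is precisely what one must quotient by, which is essentially the ``In particular'' clause already recorded in Theorem~\ref{thhomologyXnotwist}.

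Next I would make the representing chain explicit. Given $a\in L_\alpha\cap(L_\beta+L_\gamma)$, write $a=\sum m_i[\alpha_i]$ in $H_1(\Sigma)$; since $\alpha_i$ bounds a meridian disk $D_i^\alpha$ in $H_\alpha$, the chain $A=\sum m_i D_i^\alpha$ is a $2$--chain in $H_\alpha\subset X$ with $\partial A$ a $1$--cycle on $\Sigma$ representing $a$. Similarly, using $a=-(b+c)$ with $b\in L_\beta$, $c\in L_\gamma$, build chains $B\subset H_\beta$ and $C\subset H_\gamma$ with $\partial B$ representing $-b$, $\partial C$ representing $-c$, arranged so that $\partial(A+B+C)=0$ on $\Sigma$; then $S=A+B+C$ is a closed $2$--chain in $X$, and one verifies it represents the class corresponding to $(a,b,c)$ under the identification above. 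The point of the symmetric complex $C$ is that this construction treats $\alpha,\beta,\gamma$ on equal footing, which is what will let me recognize Wall's form.

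To compute $\langle S,S'\rangle_X$ for two such surfaces $S=A+B+C$ and $S'=A'+B'+C'$, I would use the standard trick for intersection numbers of surfaces meeting $\Sigma$: isotope $S$ slightly, pushing the $H_\alpha$--part into the interior of $H_\alpha$, the $H_\beta$--part into $H_\beta$, and the $H_\gamma$--part into $H_\gamma$, away from $\Sigma$; do the analogous push for $S'$ in a \emph{different} direction (or use that the $D_i^\nu$ for a fixed $\nu$ are disjoint and that $H_\alpha,H_\beta,H_\gamma$ meet only along $\Sigma$). After the pushoff, the only contributions to $S\cap S'$ come from intersections that localize near $\Sigma$, and a bookkeeping argument — tracking which of the three handlebodies each piece lands in, using that $X_1=H_\alpha\cup_\Sigma H_\beta$ etc.\ and the cyclic order $H_\alpha\to H_\beta\to H_\gamma$ induced by the co-orientation of $\Sigma$ — reduces $\langle S,S'\rangle_X$ to an algebraic intersection number on the surface $\Sigma$ of one boundary curve against the other. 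Choosing the pushoffs so that $S$ is pushed with its $C$--part toward $\Sigma$ from the $H_\gamma$ side while $S'$ stays put near its $A'$--part, one gets exactly $\langle c,a'\rangle_\Sigma$, i.e.\ $\lambda(a,a')$. Finally I would check independence of all choices: different factorizations $a=-(b+c)$ change $c$ by an element of $L_\beta\cap L_\gamma$, hence change $\langle c,a'\rangle_\Sigma$ by $\langle L_\beta\cap L_\gamma, L_\alpha\rangle_\Sigma$; but a class in $L_\beta\cap L_\gamma$ bounds in both $H_\beta$ and $H_\gamma$, so pairs trivially with any class in $L_\alpha$ (which bounds in $H_\alpha$) — this is exactly the well-definedness of Wall's form, and it simultaneously shows the pairing descends to the quotient by $(L_\alpha\cap L_\beta)+(L_\alpha\cap L_\gamma)$.

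The main obstacle I expect is the sign/orientation bookkeeping in the pushoff computation: getting the cyclic order of the handlebodies, the co-orientation conventions for $\Sigma$ in each $X_i$, and the ``outward normal first'' convention to all line up so that the answer comes out as $\langle c,a'\rangle_\Sigma$ rather than $-\langle c,a'\rangle_\Sigma$ or $\langle b,a'\rangle_\Sigma$. Relatedly, one must argue that no intersection points are missed — that after the pushoffs, $S$ and $S'$ really are transverse and all intersections are accounted for near $\Sigma$ — which requires a careful choice of the regular neighborhood structure around $\Sigma$ (a collar $\Sigma\times(-\epsilon,\epsilon)^2$ matching the local picture of Figure~\ref{figreconstructX}) and a verification that the interior pieces $D_i^\nu$ and $D_j^{\nu'}$ for $\nu\neq\nu'$ contribute nothing. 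Everything else is routine once the geometric representatives and the collar model are in place.
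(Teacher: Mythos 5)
Your high-level strategy — identify $H_2(X;\Z)$ via Theorem~\ref{thhomologyXnotwist}, realize classes by explicit $2$--chains built from meridian disks, and compute intersections geometrically near $\Sigma$ — matches the paper's, but the cycle construction as written has a real gap. Given $(a,b,c)\in L_\alpha\oplus L_\beta\oplus L_\gamma$ with $a+b+c=0$ in $H_1(\Sigma;\Z)$, the chains $A,B,C$ of meridian disks have boundaries that are specific $1$--chains on $\Sigma$ supported on parallel copies of the $\alpha_i$'s, the $\beta_j$'s, and the $\gamma_k$'s respectively; these are pairwise distinct curve families, so $\partial A+\partial B+\partial C$ is null-homologous on $\Sigma$ but not zero as a chain, and no arrangement makes it literally vanish. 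Hence $A+B+C$ is not a cycle. The paper closes it up with an extra piece: a $2$--chain $T(\mu)$ supported on a parallel copy $\{q\}\times\Sigma$ of the surface inside the $D^2\times\Sigma$ model of Section~\ref{subsecReconstruction}, with $\partial T(\mu)=\mu_\alpha+\mu_\beta+\mu_\gamma$, connected to the meridian disks over $p_\alpha,p_\beta,p_\gamma\in\partial D^2$ by sleeves $[p_\nu,q]\times\mu_\nu$. Without this cap and the explicit model there is no closed chain to intersect.

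The model also resolves, rather than merely flags, the sign and localization issues you identify as the ``main obstacle.'' Choosing interior points $q,q'\in D^2$ so that among all segments $[p_\nu,q]$ and $[p_{\nu'},q']$ the only crossing is $[p_\gamma,q]\cap[p_\alpha,q']=\{p_0\}$ forces the caps $T(\mu)\subset\{q\}\times\Sigma$ and $T(\mu')\subset\{q'\}\times\Sigma$ and all meridian disks to be disjoint, so the entire intersection happens in the single fiber $\{p_0\}\times\Sigma$, where $S_q(\mu)$ appears as $\mu_\gamma$ and $S_{q'}(\mu')$ as $\mu'_\alpha$. This gives $\langle S_q(\mu),S_{q'}(\mu')\rangle_X=\langle\mu_\gamma,\mu'_\alpha\rangle_\Sigma=\lambda(\mu,\mu')$ on the nose, with no residual bookkeeping. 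One smaller point: in your well-definedness check, the reason $\langle L_\beta\cap L_\gamma,\,L_\alpha\cap(L_\beta+L_\gamma)\rangle_\Sigma=0$ is that $L_\beta$ and $L_\gamma$ are Lagrangian in $H_1(\Sigma;\Z)$: writing $a'=b''+c''$ with $b''\in L_\beta$, $c''\in L_\gamma$ and $d\in L_\beta\cap L_\gamma$, one gets $\langle d,b''\rangle_\Sigma=0=\langle d,c''\rangle_\Sigma$. The phrase ``$d$ bounds in $H_\beta$ and $H_\gamma$ while $a'$ bounds in $H_\alpha$'' does not by itself give vanishing; it is the Lagrangian property that does.
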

\begin{proof}
 By Theorem \ref{thhomologyXnotwist}, 
 $$H_2(X)\simeq\frac{\ker\left(L_\alpha\oplus L_\beta\oplus L_\gamma\to H_1(\Sigma;\Z)\right)}
 {(L_\alpha\cap L_\beta)\oplus(L_\beta\cap L_\gamma)\oplus(L_\gamma\cap L_\alpha)}\simeq
 \frac{L_\alpha\cap(L_\beta+ L_\gamma)}{(L_\alpha\cap L_\beta)+(L_\alpha\cap L_\gamma)}.$$
 Following the trisection, $X$ is built from $D^2\times\Sigma$ by adding $2,3,4$--cells attached to $S^1\times\Sigma$, see Section \ref{subsecReconstruction}.  
 Let $p_0, p_\alpha, p_\beta, p_\gamma$ be the points in $D^2$ defined as in Figure~\ref{figreconstructX}.
 Given $\mu=(\mu_\alpha,\mu_\beta,\mu_\gamma)$ in $\ker(L_\alpha\oplus L_\beta\oplus L_\gamma\to H_1(\Sigma))$ and a point $q\neq p_0 \in \Int{D^2}$, we construct a $2$-cycle
 $S_q(\mu) \in C_2(X)$ as follows. For all $\nu \in \{ \alpha, \beta, \gamma \}$, write $\mu_\nu$ as a linear combination of the $\nu_i$ and define $D_\nu(\mu)$ as the corresponding disjoint union of meridian disks bounded by parallel copies of the $\{p_\nu\}\times\nu_i$. Then define $$S_q(\mu):=S_\alpha+S_\beta+S_\gamma+T(\mu),$$ where:
 \begin{itemize}
  \item $S_\nu=D_\nu(\mu) \cup \left([p_\nu,q]\times\mu_\nu \right)$,
  \item $T(\mu)$ is a 2--chain with support contained in $\{q\}\times\Sigma$ with $\partial T(\mu)=\mu_\alpha+\mu_\beta+\mu_\gamma$.
 \end{itemize}
 Let now $\mu=(\mu_\alpha,\mu_\beta,\mu_\gamma)$ and $\mu'=(\mu_\alpha',\mu_\beta',\mu_\gamma')$ be in $\ker(L_\alpha\oplus L_\beta\oplus L_\gamma\to H_1(\Sigma))$. 
 Fix $q$ and $q'$ in $\Int{D^2}$ such that $p_0=(p_\gamma,q)\cap(p_\alpha,q')$. The $2$-cycles $S_q(\mu)$ and $S_{q'}(\mu')$ intersect transversally in $\{ p_0 \} \times \Sigma$ and
 $$ \langle S_q(\mu),S_{q'}(\mu')\rangle_X=\langle\mu_\gamma,\mu_\alpha'\rangle_\Sigma = \lambda(\mu,\mu'),$$
 where we assume that $D^2$ is oriented by the oriented basis $(\vec{p_0p_\alpha},\vec{p_0p_\gamma})$.
\end{proof}

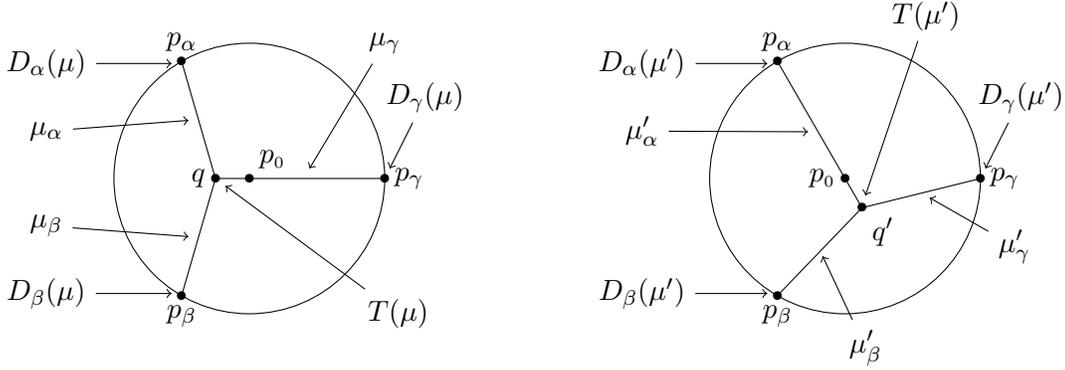
\begin{figure} [htb]
 \begin{center}
 \begin{tikzpicture} [scale=0.9]
 \begin{scope}
  \draw (0,0) circle (2);
  \draw (2,0) node {$\scriptstyle{\bullet}$} node[right] {$p_\gamma$} (-0.5,0) node {$\scriptstyle{\bullet}$} node[left] {$q$};
  \draw[rotate=120] (2,0) node {$\scriptstyle{\bullet}$} node[above] {$p_\alpha$} (1,0);
  \draw[rotate=240] (2,0) node {$\scriptstyle{\bullet}$} node[below] {$p_\beta$} (1,0);
  \draw (0,0) node {$\scriptstyle{\bullet}$} node[above right] {$p_{\scriptscriptstyle{0}}$};
  \draw (-0.5,0) -- (2,0) (-1,1.73) -- (-0.5,0) -- (-1,-1.73);
  \node (Da) at (-3,1.7) {$D_\alpha(\mu)$}; \node (pa) at (-1,1.7) {}; \draw[->] (Da) -- (pa);
  \node (Db) at (-3,-1.7) {$D_\beta(\mu)$}; \node (pb) at (-1,-1.7) {}; \draw[->] (Db) -- (pb);
  \node (Dg) at (2.6,1.2) {$D_\gamma(\mu)$}; \node (pg) at (2,0) {}; \draw[->] (Dg) -- (pg);
  \node (mua) at (-3,0.7) {$\mu_\alpha$}; \node (sa) at (-0.75,0.87) {}; \draw[->] (mua) -- (sa);
  \node (mub) at (-3,-0.7) {$\mu_\beta$}; \node (sb) at (-0.75,-0.87) {}; \draw[->] (mub) -- (sb);
  \node (mug) at (2,2) {$\mu_\gamma$}; \node (sg) at (0.8,0) {}; \draw[->] (mug) -- (sg);
  \node[align=center] (T) at (2.2,-2) {$T(\mu)$}; \node (q1) at (-0.5,0) {}; \draw[->] (T) -- (q1);
 \end{scope}
 \begin{scope} [xshift=8.8cm]
  \draw (0,0) circle (2);
  \draw (2,0) node {$\scriptstyle{\bullet}$} node[right] {$p_\gamma$};
  \draw[rotate=120] (2,0) node {$\scriptstyle{\bullet}$} node[above] {$p_\alpha$} (-0.5,0) node {$\scriptstyle{\bullet}$} node[below right] {$q'$};
  \draw[rotate=240] (2,0) node {$\scriptstyle{\bullet}$} node[below] {$p_\beta$};
  \draw (0,0) node {$\scriptstyle{\bullet}$} node[left] {$p_{\scriptscriptstyle{0}}$};
  \draw[rotate=120] (-0.5,0) -- (2,0) (-1,1.73) -- (-0.5,0) -- (-1,-1.73);
  \node (Da') at (-3,1.7) {$D_\alpha(\mu')$}; \node (pa') at (-1,1.7) {}; \draw[->] (Da') -- (pa');
  \node (Db') at (-3,-1.7) {$D_\beta(\mu')$}; \node (pb') at (-1,-1.7) {}; \draw[->] (Db') -- (pb');
  \node (Dg') at (2.6,1.2) {$D_\gamma(\mu')$}; \node (pg') at (2,0) {}; \draw[->] (Dg') -- (pg');
  \node (mua') at (-3,0.7) {$\mu_\alpha'$}; \node (sa') at (-0.4,0.69) {}; \draw[->] (mua') -- (sa'); 
  \node (mub') at (0.3,-2.5) {$\mu_\beta'$}; \node (sb') at (-0.4,-1) {}; \draw[->] (mub') -- (sb');
  \node (mug') at (2.5,-1) {$\mu_\gamma'$}; \node (sg') at (1.1,-0.2) {}; \draw[->] (mug') -- (sg');
  \node[align=center] (T) at (1.2,2.4) {$T(\mu')$}; \node (q2) at (0.25,-0.4) {}; 
  \draw[->] (T) -- (q2);
 \end{scope}
 \end{tikzpicture}
 \end{center} \caption{The 2--cycles $S$ and $S'$.} \label{figXsurfaces}
\end{figure}

We now prove Theorem \ref{thtwistedintform}, which is the analogue in the twisted setting of Proposition~\ref{propintform}. 
\begin{proof}[Proof of Theorem \ref{thtwistedintform}]
 All the curves of the families $\alpha$, $\beta$, $\gamma$ have their homology classes in $\ker(\varphi)$, so that they lift as loops. 
 Moreover, the meridian disks and the paths of Figure~\ref{figXsurfaces} drawn on $D^2$ are contractible, thus they also lift as disks and paths. 
 Hence the result follows from the very same argument as in Proposition \ref{propintform}.
\end{proof}

We turn to the intersection form on $H_1(X;\Z)\times H_3(X;\Z)$.

\begin{proposition} \label{propintfromH13}
 There is an isomorphism
 $$ \left(H_1(X;\Z)\times H_3(X;\Z); \langle \cdot,\cdot \rangle_{X} \right) \cong \left(\frac{H_1(\Sigma)}{L_\alpha + L_\beta + L_\gamma}\times(L_\alpha\cap L_\beta\cap L_\gamma); \langle\cdot,\cdot\rangle_\Sigma \right).$$
\end{proposition}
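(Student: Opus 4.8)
The plan is to build explicit chain-level representatives for classes in $H_1(X;\Z)$ and $H_3(X;\Z)$ out of the trisection data and compute their algebraic intersection in $\Sigma$, exactly in the spirit of the proof of Proposition~\ref{propintform}. For the $H_1$ side, recall from Theorem~\ref{thhomologyXnotwist} (and the proof via \eqref{decomp}) that $H_1(X)\simeq H_1(\Sigma)/(L_\alpha+L_\beta+L_\gamma)$, the isomorphism coming from the inclusion $\Sigma\hookrightarrow X$ (more precisely from $H_1(N)\simeq H_1(X)$ and $\Sigma\subset N$). So a class in $H_1(X)$ is represented by a loop $\delta$ on $\Sigma$, well defined modulo the curves $\alpha_i,\beta_i,\gamma_i$. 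For the $H_3$ side, recall $H_3(X)\simeq L_\alpha\cap L_\beta\cap L_\gamma$: a class is represented by a surface $\Sigma'\subset\Sigma$ which bounds a union of meridian disks $D_\alpha(\mu)$ in $H_\alpha$, a union $D_\beta(\mu)$ in $H_\beta$, and a union $D_\gamma(\mu)$ in $H_\gamma$. Pushing these disks into $D^2\times\Sigma$ along the three spokes $[p_\nu,p_0]$ as in Figure~\ref{figreconstructX}, one assembles a closed $3$--chain $Z(\mu)$ in $X$ whose support is $\Sigma'$ together with three ``fins'' $[p_\nu,p_0]\times\Sigma'$ and the meridian-disk families; the closedness is exactly the relation $\mu\in L_\alpha\cap L_\beta\cap L_\gamma$.

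Next I would arrange the two chains to be transverse and localize the intersection. Represent the $H_1$--class by $\{q_0\}\times\delta$ for a point $q_0\in D^2$ chosen generically (away from $p_0$ and the three spokes), and represent the $H_3$--class by $Z(\mu)$ as above. Since $\delta$ lies in a single slice $\{q_0\}\times\Sigma$ and $Z(\mu)$ meets that slice only in $\{q_0\}\times\Sigma'$ (the fins and meridian disks live over the spokes and over the $p_\nu$), the intersection $(\{q_0\}\times\delta)\cap Z(\mu)$ is precisely $\{q_0\}\times(\delta\cap\Sigma')$. Then
$$\langle \delta, Z(\mu)\rangle_X = \langle \delta,\Sigma'\rangle_{\{q_0\}\times\Sigma} = \langle \delta,\Sigma'\rangle_\Sigma,$$
up to a global sign fixed by the orientation conventions (the ``outward normal first'' rule together with the chosen orientation of $D^2$), which one pins down once and for all. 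Under the identifications $H_1(X)\leftrightarrow H_1(\Sigma)/(L_\alpha+L_\beta+L_\gamma)$ and $H_3(X)\leftrightarrow L_\alpha\cap L_\beta\cap L_\gamma$ this is exactly the pairing $\langle\cdot,\cdot\rangle_\Sigma$ claimed in the statement, and it is manifestly well defined: changing $\delta$ by a curve in $L_\alpha+L_\beta+L_\gamma$ changes the intersection number with any class of $L_\alpha\cap L_\beta\cap L_\gamma$ by zero, since e.g.\ $\langle\alpha_i,\Sigma'\rangle_\Sigma=0$ because $\alpha_i$ bounds a disk in $H_\alpha$ while $\Sigma'$ bounds disks on the same side. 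This also shows the pairing is nondegenerate when it should be, recovering Poincaré duality $H_1(X)\cong H_3(X)^*$ from the surface picture.

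The main obstacle I expect is bookkeeping rather than conceptual: (i) checking carefully that $Z(\mu)$ is genuinely a cycle in $C_3(X)$ and that its class corresponds to $\mu$ under the isomorphism of Theorem~\ref{thhomologyXnotwist} — this requires tracing through the Mayer--Vietoris identification of $H_3(X)$ with $H_2(\partial N)$ and then with $L_\alpha\cap L_\beta\cap L_\gamma$, and verifying the three meridian-disk families glued along $\Sigma'$ represent the correct relative classes; and (ii) getting the sign right, i.e.\ verifying that the co-orientation conventions make the local intersection sign at $\{q_0\}\times x$ equal to the sign $\sigma_x$ of $x\in\delta\cap\Sigma'$ inside $\Sigma$ with no extra twist — here one uses that the normal direction to $\{q_0\}\times\Sigma$ in $X$ is the $D^2$--direction, contributing a fixed sign that is absorbed into the identification. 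Once these are settled the computation is immediate. (An alternative, fully homological, route is to identify both pairings with the cup-product pairing $H^1\times H^3\to H^4$ on $X$ pulled back from $\Sigma$, but the chain-level argument above is more in keeping with the paper's approach and gives explicit representatives.)
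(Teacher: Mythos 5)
Your overall strategy is the same as the paper's: represent a class in $H_1(X)$ by a curve $\delta$ on a slice of $\Sigma$, construct an explicit $3$--cycle associated to $\mu\in L_\alpha\cap L_\beta\cap L_\gamma$, localize the intersection to a single copy of $\Sigma$, and read off $\langle\cdot,\cdot\rangle_\Sigma$. The localization step and the well-definedness argument (each $L_\nu$ is Lagrangian, so changing $\delta$ by a class in $L_\alpha+L_\beta+L_\gamma$ does not affect the pairing with $L_\alpha\cap L_\beta\cap L_\gamma$) are both correct and in the right spirit.

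However, your construction of the $3$--cycle $Z(\mu)$ has a genuine gap, not just bookkeeping. The pieces you list -- the $1$--cycle you call $\Sigma'$, the fins $[p_\nu,p_0]\times\Sigma'$, and the meridian-disk families -- are all of dimension $\le 2$, so they cannot assemble into a (non-degenerate) $3$--chain. More importantly, it is impossible for $Z(\mu)$ to live entirely in $Y=H_\alpha\cup H_\beta\cup H_\gamma$ or in $N$: by the Mayer--Vietoris argument behind Theorem~\ref{thhomologyXnotwist}, the map $H_3(N)\to H_3(X)$ is \emph{zero}, so a $3$--cycle supported in $N$ represents the trivial class. A representative of a nonzero class of $H_3(X)\simeq L_\alpha\cap L_\beta\cap L_\gamma$ must go through the four-dimensional pieces $X_i$. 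The paper's cycle $M$ does exactly this: its ``core'' is a genuinely $3$--dimensional piece $M_0=V\times\mu_\alpha$, where $V\subset D^2$ is a $2$--simplex with vertices $q_\alpha,q_\beta,q_\gamma$, and the $3$--chains $M_1,M_2,M_3$ are caps obtained by filling $2$--cycles \emph{inside} $X_1,X_2,X_3$. That filling step is possible precisely because $H_2(X_i)=0$ (the $X_i$ are $4$--dimensional $1$--handlebodies). Your remark that ``the closedness is exactly the relation $\mu\in L_\alpha\cap L_\beta\cap L_\gamma$'' is therefore misleading: that relation is necessary, but the closedness also uses $H_2(X_i)=0$ in an essential way, and your sketch does not account for the $X_i$--portions of the cycle at all. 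Once the correct cycle $M$ is in hand, your intersection computation in the slice $\{q_0\}\times\Sigma$ goes through as you describe.
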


 \begin{figure} [htb]
 \begin{center}
 \begin{tikzpicture}
 \begin{scope}
  \foreach \t in {0,120,240} {
  \draw[rotate=\t] (0.5,0) -- (2,0);}
  \draw (0,0) circle (2);
  \draw[pattern=north east lines] (0.5,0) -- (-0.25,0.43) -- (-0.25,-0.43) -- (0.5,0);
  \draw (2,0) node {$\scriptstyle{\bullet}$} node[right] {$p_\gamma$} (0.5,0) node {$\scriptstyle{\bullet}$} node[below] {$q_\gamma$};
  \draw[rotate=120] (2,0) node {$\scriptstyle{\bullet}$} node[above] {$p_\alpha$} (0.5,0) node {$\scriptstyle{\bullet}$} node[left] {$q_\alpha$};
  \draw[rotate=240] (2,0) node {$\scriptstyle{\bullet}$} node[below] {$p_\beta$} (0.5,0) node {$\scriptstyle{\bullet}$} node[left] {$q_\beta$};
  \node (Da) at (-3,1.7) {$F_\alpha$}; \node (pa) at (-0.8,1.5) {}; \draw[->] (Da) -- (pa);
  \node (Db) at (-3,-1.7) {$F_\beta$}; \node (pb) at (-0.8,-1.5) {}; \draw[->] (Db) -- (pb);
  \node (Dg) at (2.6,1.2) {$F_\gamma$}; \node (pg) at (1.7,-0.05) {}; \draw[->] (Dg) -- (pg);
  \node (mu) at (0,3) {$\mu$}; \node (s) at (0,0) {}; \draw[->] (mu) -- (s);
  \node (M1) at (-2.8,0) {$M_1$}; \node (s1) at (-1,0) {}; \draw[->] (M1) -- (s1);
  \node (M2) at (1.2,-2.4) {$M_2$}; \node (s2) at (0.6,-0.8) {}; \draw[->] (M2) -- (s2);
  \node (M3) at (1.2,2.4) {$M_3$}; \node (s3) at (0.6,0.8) {}; \draw[->] (M3) -- (s3);
 \end{scope}
 \end{tikzpicture}
 \end{center} \caption{The 3--cycle $M$ associated to $\mu$.} \label{fig3cycle}
 \end{figure}

\begin{proof}[Proof of Proposition \ref{propintfromH13}]
 Let $\mu\in L_\alpha\cap L_\beta\cap L_\gamma$. By a slight abuse of notation, we use the same letter $\mu$ for a representative of it on $\Sigma$. We will construct a 3--cycle $M$ associated with $\mu$ that intersects $\Sigma$ along $\mu$. Once again, we view $X$ as reconstructed from the trisection diagram. 
 Let $q_\alpha,q_\beta,q_\gamma$ be the points on $D^2$ represented in Figure \ref{fig3cycle} and let $V$ be the hatched triangle they define. We will complete the 3--chain $M_0=V\times\mu$ into a 3--cycle. 
 For each $\nu\in\{\alpha,\beta,\gamma\}$, $\mu$ bounds a surface $F_\nu$ properly embedded in $H_\nu$. For $i,\nu,\nu'$ such that $\partial X_i=H_\nu\cup H_{\nu'}$, since $X_i\cong \big(X_i\setminus(\Int V\times\Sigma)\big)$ has trivial second homology, the closed surface $F_\nu\cup([q_\nu,q_{\nu'}]\times\mu)\cup F_{\nu'}$ bounds a $3$--cycle $M_i\subset \big(X_i\setminus(\Int V\times\Sigma)\big)$. 
 Finally, $M=\sum_{0\leq i\leq 3}M_i$ is a 3--cycle associated with $\mu$. Then, for any $\mu'\in H_1(\Sigma)$, we have 
 $\langle \mu',M\rangle_X=\langle \mu',\mu_\alpha\rangle_\Sigma=\langle\mu',\mu\rangle_\Sigma.$
\end{proof}

A similar proof yields Proposition \ref{propintfromH13twisted}.

\section{Examples} \label{secex}

\subsection*{Example 1}
\begin{figure}[htb]
\begin{center}
\begin{tikzpicture} 
\begin{scope}[scale=0.6]
\draw (0,0) ..controls +(0,1) and +(-2,1) .. (4,2);
\draw (4,2) ..controls +(2,-1) and +(-2,-1) .. (8,2);
\draw (8,2) ..controls +(2,1) and +(-1.2,0) .. (12,1.3);
\draw (0,0) ..controls +(0,-1) and +(-2,-1) .. (4,-2);
\draw (4,-2) ..controls +(2,1) and +(-2,1) .. (8,-2);
\draw (8,-2) ..controls +(2,-1) and +(-1.2,0) .. (12,-1.3);
\draw (14,2) ..controls +(2,1) and +(0,1) .. (18,0);
\draw (14,-2) ..controls +(2,-1) and +(0,-1) .. (18,0);
\draw (12,1.3) ..controls +(0.5,0) and +(-1,-0.5) .. (14,2);
\draw (12,-1.3) ..controls +(0.5,0) and +(-1,0.5) .. (14,-2);
\newcommand{\trou}[2]{\draw[xshift=#1cm,yshift=#2cm]
(-1,0.1) ..controls +(0.5,-0.25) and +(-0.5,-0.25) .. (1,0.1)
(-0.7,0) ..controls +(0.6,0.2) and +(-0.6,0.2) .. (0.7,0)}
\trou{3}{0};
\trou{9}{0};
\trou{15}{0};
\draw (2.95,-0.8) .. controls +(1,0.2) and +(-1,0.2) .. (9.1,-0.8) node {$\star$} .. controls +(1,0.2) and +(-1,0.2) .. (15.2,-0.8);
\end{scope}
\begin{scope} [xscale=0.6,yshift=0.15cm]
\draw[green] (3.2,-0.2) ..controls +(0.2,-0.5) and +(0.2,0.5) .. (3.2,-1.5);
\draw[dashed,green] (3.2,-0.2) ..controls +(-0.2,-0.5) and +(-0.2,0.5) .. (3.2,-1.5);
\draw [->,green] (3.35,-0.8)--(3.35,-0.9);
\draw[green] (3.2,-1.5) node[below right] {$\scriptstyle{\gamma_1}$};
\draw[blue] (3,-0.2) ..controls +(0.2,-0.5) and +(0.2,0.5) .. (3,-1.5);
\draw[dashed,blue] (3,-0.2) ..controls +(-0.2,-0.5) and +(-0.2,0.5) .. (3,-1.5);
\draw[->,blue] (3.15,-0.8)--(3.15,-0.9);
\draw[blue] (3,-1.5) node[below] {$\scriptstyle{\beta_1}$};
\draw[red] (2.8,-0.2) ..controls +(0.2,-0.5) and +(0.2,0.5) .. (2.8,-1.5);
\draw[dashed,red] (2.8,-0.2) ..controls +(-0.2,-0.5) and +(-0.2,0.5) .. (2.8,-1.5);
\draw [->,red] (2.95,-0.8)--(2.95,-0.9);
\draw[red] (2.8,-1.5) node[below left] {$\scriptstyle{\alpha_1}$};
\end{scope}
\begin{scope} [xscale=0.6,xshift=6cm,yshift=0.15cm]
\draw[green] (3.2,-0.2) ..controls +(0.2,-0.5) and +(0.2,0.5) .. (3.2,-1.5);
\draw[dashed,green] (3.2,-0.2) ..controls +(-0.2,-0.5) and +(-0.2,0.5) .. (3.2,-1.5);
\draw [->,green] (3.35,-0.8)--(3.35,-0.9);
\draw[green] (3.2,-1.5) node[below right] {$\scriptstyle{\gamma_2}$};
\draw[blue] (3,-0.2) ..controls +(0.2,-0.5) and +(0.2,0.5) .. (3,-1.5);
\draw[dashed,blue] (3,-0.2) ..controls +(-0.2,-0.5) and +(-0.2,0.5) .. (3,-1.5);
\draw[->,blue] (3.15,-0.8)--(3.15,-0.9);
\draw[blue] (3,-1.5) node[below] {$\scriptstyle{\beta_2}$};
\draw[red] (2.8,-0.2) ..controls +(0.2,-0.5) and +(0.2,0.5) .. (2.8,-1.5);
\draw[dashed,red] (2.8,-0.2) ..controls +(-0.2,-0.5) and +(-0.2,0.5) .. (2.8,-1.5);
\draw [->,red] (2.95,-0.8)--(2.95,-0.9);
\draw[red] (2.8,-1.5) node[below left] {$\scriptstyle{\alpha_2}$};
\end{scope}
\begin{scope} [xscale=0.6,xshift=12.3cm,yshift=0.15cm]
\draw[red] (2.8,-0.2) ..controls +(0.2,-0.5) and +(0.2,0.5) .. (2.8,-1.5);
\draw[dashed,red] (2.8,-0.2) ..controls +(-0.2,-0.5) and +(-0.2,0.5) .. (2.8,-1.5);
\draw [->,red] (2.95,-0.8)--(2.95,-0.9);
\draw[red] (2.8,-1.5) node[below left] {$\scriptstyle{\alpha_3}$};
\draw[blue] (2.7,-0.1)ellipse(2 and 0.9);
\draw[blue,->] (4.7,-0.1)--(4.7,-0.05) node[right] {$\scriptstyle{\beta_3}$};
\draw[green,->] (2.3,-0.2) .. controls +(0.1,-0.1) and +(-0.2,0) .. (2.6,-0.5) .. controls +(0.5,0) and +(0,-0.5) .. (4.3,0) .. controls +(0,0.4) and +(0.5,0) .. (2.7,0.5) node[below] {$\scriptstyle{\gamma_3}$};
\draw[green] (2.7,0.5) .. controls +(-0.5,0) and +(0,0.4) .. (1.1,0) .. controls +(0,-0.5) and +(0,0.5) .. (2,-1.42);
\draw[green,dashed] (2,-1.42) .. controls +(0.2,0.5) and +(-0.2,-0.5) .. (2.3,-0.2);
\end{scope}
\end{tikzpicture} \caption{A trisection diagram for $(S^1\times S^2)\sharp(S^1\times S^2)\sharp \C P^2$} \label{figtrisectionexample}
\end{center}
\end{figure}
The trisection diagram $(\Sigma;\alpha,\beta,\gamma)$ in Figure \ref{figtrisectionexample} represents the $4$--manifold $X = (S^1\times S^3) \, \sharp \, (S^1\times S^3) \, \sharp \, \C P^2$. 
The black paths fix a choice of a representative in $\pi_1(\Sigma,\star)$ of each loop. 
Let $x_i, y_i$ for $i\in\{1,2,3\}$ be the generators of $\pi_1(\Sigma,\star)$ represented in Figure \ref{figbasisSigma}. Their homology classes provide a symplectic basis of $H_1(\Sigma;\Z)$. Note that the family $(x_i,y_i)_{1\leq i\leq 3}$ is not a symplectic basis for $\Sigma$ as in Definition \ref{defsympl}, although it could easily be modified to get such a basis.
The following relations hold in $\pi_1(\Sigma,\star)$: $\alpha_1=\beta_1=\gamma_1=x_1$, $\alpha_2=\beta_2=\gamma_2=x_2$, $\alpha_3=x_3$, $\beta_3=y_3$ and $\gamma_3=x_3y_3$.
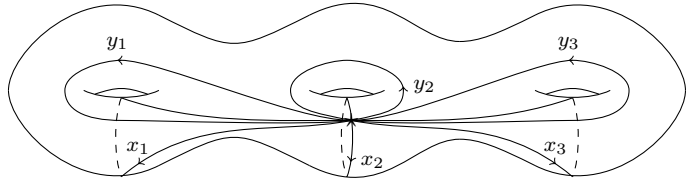
\begin{figure}[htb]
\begin{center}
\begin{tikzpicture} [scale=0.5]
\draw (0,0) ..controls +(0,1) and +(-2,1) .. (4,2);
\draw (4,2) ..controls +(2,-1) and +(-2,-1) .. (8,2);
\draw (8,2) ..controls +(2,1) and +(-1.2,0) .. (12,1.3);
\draw (0,0) ..controls +(0,-1) and +(-2,-1) .. (4,-2);
\draw (4,-2) ..controls +(2,1) and +(-2,1) .. (8,-2);
\draw (8,-2) ..controls +(2,-1) and +(-1.2,0) .. (12,-1.3);
\draw (14,2) ..controls +(2,1) and +(0,1) .. (18,0);
\draw (14,-2) ..controls +(2,-1) and +(0,-1) .. (18,0);
\draw (12,1.3) ..controls +(0.5,0) and +(-1,-0.5) .. (14,2);
\draw (12,-1.3) ..controls +(0.5,0) and +(-1,0.5) .. (14,-2);

\draw (9.12,-0.8) node {$\star$};

\draw (2,0) ..controls +(0.5,-0.25) and +(-0.5,-0.25) .. (4,0);
\draw (2.3,-0.1) ..controls +(0.6,0.2) and +(-0.6,0.2) .. (3.7,-0.1);
\draw (3,-0.2) ..controls +(2,-0.7) and +(-2,0) .. (9.12,-0.8);
\draw (3,-2.3) ..controls +(2,1.7) and +(-2,-0.4) .. (9.12,-0.8);
\draw[dashed] (3,-0.2) .. controls +(-0.2,-0.5) and +(-0.2,0.5) .. (3,-2.3);
\draw (9.12,-0.8) .. controls +(-1.5,0) and +(1,0) .. (3,0.8) .. controls +(-0.5,0) and +(0,0.5) .. (1.5,0) .. controls +(0,-0.5) and +(-1,0) .. (3,-0.8) .. controls +(1,0) and +(-1,-0.15) .. (9.12,-0.8);
\draw [->] (3.5,-1.92)--(3.45,-1.96) node[above] {$\scriptstyle{x_1}$};
\draw [->] (3,0.8)--(2.9,0.8) node[above] {$\scriptstyle{y_1}$};

\draw (8,0) ..controls +(0.5,-0.25) and +(-0.5,-0.25) .. (10,0);
\draw (8.3,-0.1) ..controls +(0.6,0.2) and +(-0.6,0.2) .. (9.7,-0.1);
\draw (9,-0.2) ..controls +(0.2,-0.5) and +(0.2,0.5) .. (9,-2.3);
\draw[dashed] (9,-0.2) ..controls +(-0.2,-0.5) and +(-0.2,0.5) .. (9,-2.3);
\draw (9.12,-0.8) .. controls +(1,0.2) and +(0,-0.5) .. (10.5,0) .. controls +(0,0.5) and +(0.5,0) .. (9,0.8) .. controls +(-0.5,0) and +(0,0.5) .. (7.5,0) .. controls +(0,-0.5) and +(-1,0.2) .. (9.12,-0.8);
\draw [->] (9.12,-1.8)--(9.1,-1.9) node[right] {$\scriptstyle{x_2}$};
\draw [->] (10.5,0)--(10.5,0.1) node[right] {$\scriptstyle{y_2}$};

\draw (14,0) ..controls +(0.5,-0.25) and +(-0.5,-0.25) .. (16,0);
\draw (14.3,-0.1) ..controls +(0.6,0.2) and +(-0.6,0.2) .. (15.7,-0.1);
\draw[dashed] (15,-0.2) ..controls +(0.2,-0.5) and +(0.2,0.5) .. (15,-2.3);
\draw (15,-0.2) ..controls +(-2,-0.7) and +(2,0) .. (9.12,-0.8);
\draw (15,-2.3) ..controls +(-2,1.7) and +(2,-0.4) .. (9.12,-0.8);
\draw (9.12,-0.8) .. controls +(1.5,0) and +(-1,0) .. (15,0.8) .. controls +(0.5,0) and +(0,0.5) .. (16.5,0) .. controls +(0,-0.5) and +(1,0) .. (15,-0.8) .. controls +(-1,0) and +(1,-0.15) .. (9.12,-0.8);
\draw [->] (14.5,-1.92)--(14.55,-1.96) node[above] {$\scriptstyle{x_3}$};
\draw [->] (15,0.8)--(14.9,0.8) node[above] {$\scriptstyle{y_3}$};
\end{tikzpicture}\caption{A basis of $H_1(\Sigma;\Z)$ with curves based at $\star$} \label{figbasisSigma}
\end{center}
\end{figure}

Setting $L=\langle x_1,x_2\rangle\subset H_1(\Sigma;\Z)$, we get $$L_\alpha=L\oplus\langle x_3\rangle, \ L_\beta=L\oplus\langle y_3\rangle \text{ and } L_\gamma=L\oplus\langle x_3+y_3\rangle.$$ Hence, by Theorem \ref{thhomologyXnotwist}:
\begin{itemize}
\item $H_1(X;\Z)\simeq\Z^2$ is generated by $y_1$ and $y_2$ 
\item $H_2(X;\Z)\simeq\Z$ is generated by $x_3$ 
\item  $H_3(X;\Z)\simeq\Z^2$ is generated by $x_1$ and $x_2$. 
\end{itemize}
In these bases, the matrix of the intersection form on $H_2(X;\Z)$ is $(1)$ and the matrix of the form on $H_1(X;\Z)\times H_3(X;\Z)$ is  $\begin{pmatrix} -1&0\\0&-1 \end{pmatrix}$, see Proposition \ref{propintform} and Proposition \ref{propintfromH13}.

Now let $G\simeq\Z^2$ be the free abelian (multiplicative) group of rank $2$ generated by $t_1$ and $t_2$. Let $\varphi :H_1(X;\Z)\to G$ be defined by $\varphi(y_1)=t_1$ and $\varphi(y_2)=t_2$. 
The following relations hold in $H_1^\varphi(\Sigma;R)$, assuming the lifts of the curves all start at the same lift of the point $\star$: $\alpha_1=\beta_1=\gamma_1=x_1$, $\alpha_2=\beta_2=\gamma_2=x_2$, $\alpha_3=x_3$, $\beta_3=y_3$ and $\gamma_3=x_3+y_3$.

In the cellular decomposition of $\Sigma$ given by Figure \ref{figbasisSigma}, the only 2--cell has boundary $\partial\Sigma=[x_1,y_1]\,y_2^{-1}\,[y_3^{-1},x_3]\,x_2\,y_2\,x_2^{-1}$. This provides a single relation in $H_1^\varphi(\Sigma;R)$:
$$r=(1-t_1)x_1+(t_2^{-1}-1)x_2.$$ 
Setting $L^\pr=\langle x_1,x_2\rangle/\langle r\rangle\subset H_1^\varphi(\Sigma;R)$, we get $$H_1^\varphi(\Sigma;R)=L^\pr\oplus\langle (1-t_2)y_1+(t_1-1)y_2,x_3,y_3\rangle$$ and
$$L_\alpha^\pr=L^\pr\oplus\langle x_3\rangle, \ L_\beta^\pr=L^\pr\oplus\langle y_3\rangle \text{ and } L_\gamma^\pr=L^\pr\oplus\langle x_3+y_3\rangle.$$ Hence by Theorem \ref{thtwistedhomologyX}:
\begin{itemize}
\item $H_1^\varphi(X;R)\simeq R$ is generated by $(1-t_2)\,y_1+(t_1-1)\,y_2$,
\item $H_2^\varphi(X;R)\simeq R$ is generated by $x_3$,
\item $H_3^\varphi(X;R)\simeq L^\pr$ and $H_3^\varphi(X;\F)\simeq\F$ is generated by $x_1$.
\end{itemize}
In these generators, the intersection form on $H_2^\varphi(X;\F)\simeq \F$ is given by $1$ and the intersection form on $H_1^\varphi(X;\F)\times H_3^\varphi(X;\F)\simeq\F\times\F$ is given by $t_1(t_2-1)$, see Theorem \ref{thtwistedintform} and Proposition \ref{propintfromH13twisted}.

We end with the computation of the torsion. Fix the homology basis $h=(h_3,h_2,h_1)$ with $h_3= x_1$, $h_2=x_3$, $h_1=(1-t_2)\,y_1+(t_1-1)\,y_2$. We compute the torsion $\tau^\varphi(X;h)\in\F/\Lambda^*$.  
Set $u=y_1$. By Proposition \ref{propTorsXpX}, $\tau^\varphi(X;h)=(t_1-1)^{-1}\tau^\varphi(\p X,\star;\p h)$ where $\p h=(\p h_3,\p h_2,\p h_1)$ and $\p h_3=(r,h_3)$, $\p h_2=h_2$, $\p h_1=(h_1,u)$. 
By Theorem \ref{thtorsionXstar}, $\tau^\varphi(\p X,\star;\p h)$ equals the torsion $\tau(\p C^\pf;\p c,\p h)$ of the complex $\p C^\pf$:
$$0 \to \p L^\pf\oplus \p L^\pf \oplus \p L^\pf \fl{\zeta} \p L_\alpha^\pf \oplus \p L_\beta^\pf \oplus \p L_\gamma^\pf \fl{\iota} H_1^\varphi(\p\Sigma,\star;\F) \to 0,$$
where $\p c$ is a $\Lambda$--basis over $\F$ and $\p L^\pf=\langle x_1,x_2\rangle\subset H_1^\varphi(\p\Sigma,\star;\F)$. Define $\p c=(\p c_3,\p c_2,\p c_1)$ by 
\begin{align*}
 &\p c_3=\big((x_1,0,0),(x_2,0,0),(0,x_1,0),(0,x_2,0),(0,0,x_1),(0,0,x_2)\big), \\
 &\p c_2=\big((x_1,0,0),(x_2,0,0),(x_3,0,0),(0,x_1,0),(0,x_2,0),(0,y_3,0),(0,0,x_1),(0,0,x_2),\\
 &\hspace{12cm} (0,0,x_3+y_3)\big), \\
 &\p c_1=(x_1,x_2,x_3,y_1,y_2,y_3).
\end{align*}
Also fix the following bases of $\mathrm{Im}(\zeta)$ and $\mathrm{Im}(\iota)$:
\begin{align*}
 &\p b_2=\big((x_1,-x_1,0),(x_2,-x_2,0),(0,x_1,-x_1),(0,x_2,-x_2)\big), \\
 &\p b_1=(x_1,x_2,x_3,y_3).
\end{align*}
Lift the latter two bases to get the following independant families in $\p L^\pf\oplus \p L^\pf \oplus \p L^\pf$ and $\p L_\alpha^\pf \oplus \p L_\beta^\pf \oplus \p L_\gamma^\pf$:
\begin{align*}
 &\bar b_2=\big((x_1,0,0),(x_2,0,0),(0,x_1,0),(0,x_2,0)\big), \\
 &\bar b_1=\big((x_1,0,0),(x_2,0,0),(x_3,0,0),(0,y_3,0)\big).
\end{align*}
Now, by definition of the torsion:
$$\tau\left(\p C^\pf;\p c,\p h\right)=\left[\frac{\p h_3.\bar b_2}{\p c_3}\right]\cdot\left[\frac{\p b_2.\p h_2.\bar b_1}{\p c_2}\right]^{-1}\left[\frac{\p b_1.\p h_1}{\p c_1}\right].$$
A straightforward computation gives $\tau^\varphi(X;h)=1-t_2\in\F/\Lambda^*$.

\subsection*{Example 2}
\begin{figure}[htb]
\begin{center}
\begin{tikzpicture} 
 \newcommand{\trou}[2]{\draw[xshift=#1cm,yshift=#2cm] (-1,0.1) ..controls +(0.5,-0.25) and +(-0.5,-0.25) .. (1,0.1) (-0.7,0) ..controls +(0.6,0.2) and +(-0.6,0.2) .. (0.7,0)}
 \draw[rounded corners=50pt] (0,5) -- (0,0) -- (12,0) -- (12,10) -- (0,10) -- (0,5);
 \trou62; \trou3{7.5}; \trou9{7.5}; \trou65;
 \draw[red] (6,1.9) ..controls +(0.2,-0.5) and +(0.2,0.5) .. (6,0) node[below] {$\scriptstyle{\alpha_1}$};
 \draw[dashed,red] (6,1.9) ..controls +(-0.2,-0.5) and +(-0.2,0.5) .. (6,0);
 \draw[->,red] (6.15,1.1) -- (6.15,1);
 \draw[red] (5.3,5) ..controls +(-1.5,0.6) and +(1.5,0.6) .. (0,5) node[left] {$\scriptstyle{\alpha_2}$};
 \draw[dashed,red] (0,5) ..controls +(1.5,-0.6) and +(-1.5,-0.6) .. (5.3,5);
 \draw[->,red] (2.7,5.45) -- (2.6,5.45);
 \draw[red] (8.3,7.5) ..controls +(-1.5,0.6) and +(1.5,0.6) .. (3.7,7.5);
 \draw[dashed,red] (3.7,7.5) ..controls +(1.5,-0.6) and +(-1.5,-0.6) .. (8.3,7.5);
 \draw[->,red] (5.9,7.95) -- (6,7.95) node[above] {$\scriptstyle{\alpha_3}$};
 \draw[red,->] (10,6) arc (-90:90:1.5) -- (2,9) arc (90:270:1.5) -- (2,6) -- (10,6) node[below] {$\scriptstyle{\alpha_4}$};
 \draw[blue] (6,5.15) ..controls +(0.6,1.5) and +(0.6,-1.5) .. (6,10) node[above] {$\scriptstyle{\beta_1}$};
 \draw[dashed,blue] (6,10) ..controls +(-0.6,-1.5) and +(-0.6,1.5) .. (6,5.15);
 \draw[->,blue] (6.45,7.5) -- (6.45,7.6);
 \draw[blue] (6.3,2.1) ..controls +(1,1) and +(0,-1.5) .. (9,7.4);
 \draw[dashed,blue] (9,7.4) ..controls +(-1,-1) and +(0,1.5) .. (6.3,2.1);
 \draw[->,blue] (7.18,3.2) -- (7.25,3.3) node[right] {$\scriptstyle{\beta_2}$};
 \draw[blue] (5.7,2.1) ..controls +(-1,1) and +(0,-1.5) .. (3,7.4);
 \draw[dashed,blue] (3,7.4) ..controls +(1,-1) and +(0,1.5) .. (5.7,2.1);
 \draw[->,blue] (4.82,3.2) -- (4.75,3.3) node[left] {$\scriptstyle{\beta_3}$};
 \draw[blue] (6,0.7) ..controls +(3,0) and +(0,-4) .. (11.7,7.5) ..controls +(0,1.5) and +(1.5,0) .. (9,9.5) ..controls +(-1,0) and +(0,1.5) .. (6.8,7.5) ..controls +(0,-1) and +(0,1) .. (7.2,5.2) ..controls +(0,-0.3) and +(0.6,0) .. (6,4.5);
 \draw[blue] (6,0.7) ..controls +(-3,0) and +(0,-4) .. (0.3,7.5) ..controls +(0,1.5) and +(-1.5,0) .. (3,9.5) ..controls +(1,0) and +(0,1.5) .. (5.2,7.5) ..controls +(0,-1) and +(0,1) .. (4.8,5.2) ..controls +(0,-0.3) and +(-0.6,0) .. (6,4.5);
 \draw[->,blue] (8,1.13) -- (8.1,1.17) node[below] {$\scriptstyle{\beta_4}$};
 \draw[green] (6,2) ellipse (2 and 1.1);
 \draw[->,green] (8,1.9) -- (8,2) node[right] {$\scriptstyle{\gamma_1}$};
 \draw[green] (6.7,5) ..controls +(1.5,0.6) and +(-1.5,0.6) .. (12,5) node[right] {$\scriptstyle{\gamma_2}$};
 \draw[dashed,green] (12,5) ..controls +(-1.5,-0.6) and +(1.5,-0.6) .. (6.7,5);
 \draw[->,green] (9.3,5.45) -- (9.4,5.45);
 \draw[green] (9,7.64) .. controls +(0.5,0.3) and +(0,0.4) .. (10.3,7.5) .. controls +(0,-0.3) and +(0.6,0) .. (9,7.1) .. controls +(-0.6,0) and +(0,-0.3) .. (7.8,7.5) .. controls +(0,0.3) and +(-0.6,0) .. (9,8.1) .. controls +(1,0)and +(0,0.4) .. (10.7,7.5) .. controls +(0,-0.4) and +(1,0) .. (9,6.7) .. controls +(-1,0) and +(0,-0.4) .. (7.4,7.5) .. controls +(0,0.4) and +(-1,0) .. (9,8.5) .. controls +(1.2,0)and +(0,0.6) .. (11.1,7.5) .. controls +(0,-0.6) and +(1.2,0) .. (9,6.3) .. controls +(-1.2,0) and +(0,-0.6) .. (7,7.5) .. controls +(0,0.8) and +(-0.7,-1) .. (8.7,10) node[above] {$\scriptstyle{\gamma_3}$};
 \draw[dashed,green] (8.7,10) ..controls +(0.2,-1) and +(0,1) .. (9,7.64);
 \draw[->,green] (8.48,9.7) -- (8.55,9.8);
 \draw[green] (3,7.64) .. controls +(-0.5,0.3) and +(0,0.4) .. (1.7,7.5) .. controls +(0,-0.3) and +(-0.6,0) .. (3,7.1) .. controls +(0.6,0) and +(0,-0.3) .. (4.2,7.5) .. controls +(0,0.3) and +(0.6,0) .. (3,8.1) .. controls +(-1,0)and +(0,0.4) .. (1.3,7.5) .. controls +(0,-0.4) and +(-1,0) .. (3,6.7) .. controls +(1,0) and +(0,-0.4) .. (4.6,7.5) .. controls +(0,0.4) and +(1,0) .. (3,8.5) .. controls +(-1.2,0)and +(0,0.6) .. (0.9,7.5) .. controls +(0,-0.6) and +(-1.2,0) .. (3,6.3) .. controls +(1.2,0) and +(0,-0.6) .. (5,7.5) .. controls +(0,0.8) and +(0.7,-1) .. (3.3,10) node[above] {$\scriptstyle{\gamma_4}$};
 \draw[dashed,green] (3.3,10) ..controls +(-0.2,-1) and +(0,1) .. (3,7.64);
 \draw[->,green] (3.52,9.7) -- (3.45,9.8);
 \draw (6,4) node {$\star$};
 \draw[red!30] (6,4) .. controls +(-3,0) and +(-4,0) .. (6.15,0.8);
 \draw[red!30] (6,4) .. controls +(-0.6,0) and +(0.5,-0.7) .. (4.8,5) .. controls +(-0.5,0.7) and +(0,-1) .. (5.1,7.5) .. controls +(0,0.2) and +(0.05,-0.1) .. (5.05,7.87);
 \draw[blue!30] (6,4) .. controls +(-2.5,0) and +(1,1) .. (2.3,2.3);
 \draw[blue!30] (6,4) -- (7.5,3.7);
 \draw[blue!30] (6,4) .. controls +(0.6,0) and +(-0.5,-0.7) .. (7.2,5) .. controls +(0.4,0.6) and +(0.5,0) .. (6.1,5.5);
 \draw[green!30] (6,4) -- (6,3.1);
 \draw[green!30] (6,4) .. controls +(1.5,0) and +(-1,-2) .. (8.5,6.35);
 \draw[green!30] (6,4) .. controls +(-1.5,0) and +(1,-2) .. (3.5,6.35);
\end{tikzpicture} \caption{A trisection diagram for $S^1\times L(3,1)$} \label{figtrisectionex}
\end{center}
\end{figure}
The trisection diagram $(\Sigma;\alpha,\beta,\gamma)$ in Figure \ref{figtrisectionex} represents the $4$--manifold $X = S^1\times L(3,1)$ product of a circle with the Lens space $L(3,1)$, see \cite[Figure~10]{Koenig}. 
Generators $x_i, y_i$ of $\pi_1(\Sigma,\star)$, with $i\in\{1,\dots,4\}$, are given in Figure \ref{figtrisectionexbase}.
\begin{figure}[htb]
\begin{center}
\begin{tikzpicture} 
 \newcommand{\trou}[2]{\draw[xshift=#1cm,yshift=#2cm] (-1,0.1) ..controls +(0.5,-0.25) and +(-0.5,-0.25) .. (1,0.1) (-0.7,0) ..controls +(0.6,0.2) and +(-0.6,0.2) .. (0.7,0)}
 \draw[rounded corners=50pt] (0,5) -- (0,0) -- (12,0) -- (12,10) -- (0,10) -- (0,5);
 \trou62; \trou3{7.5}; \trou9{7.5}; \trou65;
 \draw (6,4) node {$\star$};
 \draw (6,4) .. controls +(-2,-1) and +(-1,2) .. (5,0) (5.7,1.92) .. controls +(-0.5,-0.5) and +(0,-0.5) .. (4.9,2) .. controls +(0,0.5) and +(-1,-1) .. (6,4);
 \draw[dashed] (5,0) .. controls +(0.7,1) and +(0,-0.7) .. (5.7,1.92);
 \draw[->] (4.345,2) -- (4.345,1.9) node[left] {$x_1$};
 \draw (6,5.15) .. controls +(1,0.3) and +(2,1.6) .. (6,4) .. controls +(3,1.5) and +(1.5,-4) .. (6,10);
 \draw[dashed] (6,10) .. controls +(-0.7,-2) and +(-0.7,2) .. (6,5.15);
 \draw[->] (6.4,9) -- (6.36,9.1) node[right] {$x_2$};
 \draw[->] (9,4.5) .. controls +(0.8,0.2) and +(-1,-1) .. (12,6) (9,7.4) .. controls +(0,-1) and +(3,1) .. (6,4) .. controls +(1.5,0.3) and +(-0.8,-0.2) .. (9,4.5) node[below] {$x_3$};
 \draw[dashed] (12,6) .. controls +(-1,0) and +(1,-1) .. (9,7.4);
 \draw[->] (4.8,7.5).. controls +(0,1) and +(0.5,-1) .. (4,10) (3.5,7.56) .. controls +(0.8,0.3) and +(0,0.3) .. (4.3,7.5) .. controls +(0,-0.7) and +(-4,1.3) .. (6,4) .. controls +(-1,0.4) and +(0,-0.5) .. (4.3,5.5) .. controls +(0,1) and +(0,-1) .. (4.8,7.5) node[right] {$x_4$};
 \draw[dashed] (4,10) .. controls +(-0.5,-1) and +(-0.3,1) .. (3.5,7.56);
 \draw[->] (6,1) .. controls +(2,0) and +(1.5,-1) .. (6,4) .. controls +(-1.5,-1) and +(-2,0) .. (6,1) node[below] {$y_1$};
 \draw[->] (6,6) .. controls +(-2,0) and +(-1.7,1) .. (6,4) .. controls +(1.7,1) and +(2,0) .. (6,6) node[above] {$y_2$};
 \draw[->] (9,8.5) .. controls +(-0.7,0) and +(0,0.7) .. (7.5,7.5) .. controls +(0,-1) and +(0,1) .. (8,5.5) .. controls +(0,-0.5) and +(1,0.3) .. (6,4) .. controls +(3,1) and +(0,-2) .. (10.5,7.5) .. controls +(0,0.7) and +(0.7,0) .. (9,8.5) node[above] {$y_3$};
 \draw[<-] (3,8.5) .. controls +(0.7,0) and +(0,0.7) .. (4.5,7.5) .. controls +(0,-1) and +(0,1) .. (4,5.5) .. controls +(0,-0.5) and +(-1,0.3) .. (6,4) .. controls +(-3,1) and +(0,-2) .. (1.5,7.5) .. controls +(0,0.7) and +(-0.7,0) .. (3,8.5) node[above] {$y_4$};
\end{tikzpicture} \caption{A basis for $\pi_1(\Sigma,\star)$} \label{figtrisectionexbase}
\end{center}
\end{figure}
Their homology classes provide a symplectic basis of $H_1(\Sigma;\Z)$. 
The following relations hold in $\pi_1(\Sigma,\star)$:
$$
\begin{array}{lclcl}
 \alpha_1=x_1 & \quad & \beta_1=x_2 & \quad & \gamma_1=y_1 \\
 \alpha_2=y_4^{-1}\,x_4\,y_4\,x_4^{-1}\,y_2^{-1}\,x_2\,y_2 && \beta_2=x_3^{-1}\,y_1^{-1}\,x_1\,y_1 && \gamma_2=x_3\,y_3^{-1}\,x_3^{-1}\,y_3\,x_2 
 \\
 \alpha_3=y_2^{-1}\,y_3^{-1}\,x_3^{-1}\,y_3\,y_2\,x_4 && \beta_3=y_4^{-1}\,x_4^{-1}\,y_4\,x_1 && \gamma_3=y_3^{-1}\,x_3\,y_3^{-2} \\
 \alpha_4=y_2^{-1}\,y_3\,y_2\,y_4 && \beta_4=y_1\,y_3\,y_4 && \gamma_4=x_4\,y_4^3
\end{array}$$
We obtain in $H_1(\Sigma;\Z)$:
$$
\begin{array}{lclcl}
 \alpha_1=x_1 & \ \  & \beta_1=x_2 & \ \  & \gamma_1=y_1 \\
 \alpha_2=x_2 && \beta_2=x_1-x_3 && \gamma_2=x_2 \\
 \alpha_3=-x_3+x_4 && \beta_3=x_1-x_4 && \gamma_3=x_3-3y_3 \\
 \alpha_4=y_3+y_4 && \beta_4=y_1+y_3+y_4 && \gamma_4=x_4+3y_4 
\end{array}$$

Hence, by Theorem \ref{thhomologyXnotwist}
\begin{itemize}
\item $H_1(X;\Z)\simeq\Z\oplus\frac{\Z}{3\Z}$ with the first summand generated by $y_2$ and the second by~$y_3$, 
\item $H_2(X;\Z)\simeq\frac{\Z}{3\Z}$ is generated by $y_3+y_4$, 
\item  $H_3(X;\Z)\simeq\Z$ is generated by $x_2$. 
\end{itemize}

In these bases, the intersection form on $H_1(X;\Z)\times H_3(X;\Z)$ is  given by $\langle y_2,x_2\rangle=-1$.

Now let $G\simeq\Z$ be the multiplicative group generated by $t$. Let $\varphi :H_1(X;\Z)\to G$ be defined by $\varphi(y_2)=t$ and $\varphi(y_3)=1$. 
The following relations hold in $H_1^\varphi(\Sigma;R)$, with $R=\Zt\textrm{ or }\Q(t)$, assuming the lifts of the curves all start at the same lift of the point~$\star$:
$$
\begin{array}{lclcl}
 \alpha_1=x_1 & \ \  & \beta_1=x_2 & \ \  & \gamma_1=y_1 \\
 \alpha_2=t^{-1}x_2 && \beta_2=x_1-x_3 && \gamma_2=x_2 \\
 \alpha_3=-t^{-1}x_3+x_4 && \beta_3=x_1-x_4 && \gamma_3=x_3-3y_3 \\
 \alpha_4=t^{-1}y_3+y_4 && \beta_4=y_1+y_3+y_4 && \gamma_4=x_4+3y_4 
\end{array}$$

In the cellular decomposition of $\Sigma$ given by Figure \ref{figtrisectionexbase}, the only 2--cell has boundary $\partial\Sigma=[y_1^{-1},x_1]\,[y_4^{-1},x_4]\,[y_2^{-1},x_2]\,[y_3^{-1},x_3]$. This provides a single relation in $H_1^\varphi(\Sigma;R)$:
$r=(1-t)\,x_2$. Hence by Theorem \ref{thtwistedhomologyX}:
\begin{itemize}
 \item $H_1^\varphi(X;\Zt)\simeq \frac{\Z}{3\Z}$ is generated by $y_3$ and $H_1^\varphi(X;\Q(t))=0$,
 \item $H_2^\varphi(X;R)=0$,
 \item $H_3^\varphi(X;\Zt)\simeq \frac{\Zt}{\langle 1-t\rangle}$ is generated by $x_2$ and $H_3^\varphi(X;\Q(t))=0$.
\end{itemize}

We now compute the torsion. Set $\Lambda=\Zt$ and $\F=\Q(t)$. The manifold $X$ has no homology over~$\F$. Set $u=y_2$. By Proposition~\ref{propTorsXpX}, the torsion $\tau^\varphi(X)\in\F/\Lambda^*$ is given by $\tau^\varphi(X)=(t-1)^{-1}\tau^\varphi(\p X,\star;\p h)$ where $\p h=(r,\emptyset,u)$. 
By Theorem~\ref{thtorsionXstar}, $\tau^\varphi(\p X,\star;\p h)$ equals the torsion $\tau(\p C^\pf;\p c,\p h)$ of the complex $\p C^\pf$:
$$0 \to \oplus_{\nu\neq\nu'}\left( \p L_\nu^\pf\cap \p L_{\nu'}^\pf \right) \fl{\zeta} \oplus_{\nu} \p L_\nu^\pf \fl{\iota} H_1^\varphi(\p\Sigma,\star;\F) \to 0,$$
where $\nu,\nu'$ run over $\{\alpha,\beta,\gamma\}$ and $\p c$ is a $\Lambda$--basis over $\F$. 
$$
\begin{array}{ll}
 \p L_\alpha^\F=\langle x_2,x_1,x_3-tx_4,y_3+ty_4\rangle & \p L_\alpha^\F\cap\p L_\beta^\F=\langle x_2,(t-1)x_1+x_3-tx_4\rangle \\
 \p L_\beta^\F=\langle x_2,x_1-x_3,x_1-x_4,y_1+y_3+y_4\rangle & \p L_\beta^\F\cap\p L_\gamma^\F=\langle x_2,-x_3+x_4+3(y_1+y_3+y_4)\rangle \\
 \p L_\gamma^\F=\langle x_2,y_1,x_3-3y_3,x_4+3y_4\rangle & \p L_\gamma^\F\cap\p L_\alpha^\F=\langle x_2,x_3-tx_4-3y_3-3ty_4\rangle
\end{array}
$$

As in the first example, fix bases $\p c$, $\p b$ and $\bar b$ to compute $\tau(\p C^\pf;\p c,\p h)$. The computation gives $\tau^\varphi(X)=1-t\in\F/\Lambda^*$.

\providecommand{\MR}[1]{}
\providecommand{\bysame}{\leavevmode\hbox to3em{\hrulefill}\thinspace}
\providecommand{\MR}{\relax\ifhmode\unskip\space\fi MR }
\providecommand{\MRhref}[2]{%
  \href{http://www.ams.org/mathscinet-getitem?mr=#1}{#2}
}
\providecommand{\href}[2]{#2}

\end{document}